\newcommand{\stwedge}{\overline{\wedge}}
\newcommand{\Mod}{{\rm Mod}}
\newcommand{\image}{{\rm Image}}
\newcommand{\Z}{{\mathbb Z}}
\newcommand{\R}{{\mathbb R}}
\newcommand{\N}{{\mathbb N}}
\newcommand{\cB}{{\mathcal B}}
\newcommand{\cV}{{\mathcal V}}
\newcommand{\cC}{{\mathcal C}}
\newcommand{\cH}{\mathcal{H}}
\newcommand{\cP}{\mathcal{P}}
\newcommand{\cU}{\mathcal{U}}
\newcommand{\bfB}{\textbf{B}}
\newcommand{\dims}{{\rm dim}}
\DeclareMathOperator{\diam}{diam}
\newcommand{\rad}{\mathrm{rad}}
\def\vint_#1{\mathchoice
          {\mathop{\vrule width 6pt height 3 pt depth -2.5pt
                  \kern -8pt \intop}\nolimits_{#1}}%
          {\mathop{\vrule width 5pt height 3 pt depth -2.6pt
                  \kern -6pt \intop}\nolimits_{#1}}%
          {\mathop{\vrule width 5pt height 3 pt depth -2.6pt
                  \kern -6pt \intop}\nolimits_{#1}}%
          {\mathop{\vrule width 5pt height 3 pt depth -2.6pt
                  \kern -6pt \intop}\nolimits_{#1}}}
\theoremstyle{plain}
\newtheorem{theorem}[equation]{Theorem}
\newtheorem{corollary}[equation]{Corollary}
\newtheorem{lemma}[equation]{Lemma}
\newtheorem{proposition}[equation]{Proposition}
\theoremstyle{definition}
\newtheorem{definition}[equation]{Definition}
\newtheorem{example}[equation]{Example}
\newtheorem{remark}[equation]{Remark}
 \numberwithin{equation}{section}
 \newcommand{\bi}{\begin{itemize}}
\newcommand{\ei}{\end{itemize}}
\title[Equality of different definitions of conformal dimension]{Equality of different definitions of conformal dimension for quasiself-similar and CLP spaces}
\author{Sylvester Eriksson-Bique}
\address{Department of Mathematics and Statistics, University of Jyv\"askyl\"a,
Seminaarinkatu 15, PO Box 35,  FI-40014 University of Jyv\"askyl\"a, Finland}
\email{sylvester.d.eriksson-bique@jyu.fi}
\thanks{The author was partially supported by Finnish Academy Grants n.~345005 and n.~356861. We thank Mathav Murugan for posing the question to us, for discussing the problem at the Okinawa Institute of Science and Technology in June 2023 and for giving helpful comments on a preprint version of this paper. The work was started at the workshop ``Random walks and analysis on metric spaces''. We thank the institute for its hospitality and care - especially given the weak typhoon that overlapped the event.}
\begin{document}
\maketitle

\begin{abstract}
We prove that for a quasiself-similar and arcwise connected compact metric space all three known versions of the conformal dimension coincide: the conformal Hausdorff dimension, conformal Assouad dimension and Ahlfors regular conformal dimension. This answers a question posed by Mathav Murugan. Quasisimilar spaces include all approximately self-similar spaces. As an example, the standard Sierpi\'nski carpet is quasiself-similar and thus the three notions of conformal dimension coincide for it.  

We also give the equality of the three dimensions for combinatorially $p$-Loewner (CLP) spaces. Both proofs involve using a new notion of combinatorial modulus, which lies between two notions of modulus that have appeared in the literature. The first of these is the modulus studied by Pansu and Tyson, which uses a Carath\'eodory construction. The second is the one used by Keith and Laakso (and later modified and used by Bourdon, Kleiner, Carrasco-Piaggio, Murugan and Shanmugalingam). By combining these approaches, we gain the flexibility of giving upper bounds for the new modulus from the Pansu-Tyson approach, and the ability of getting lower bounds using the Keith-Laakso approach. Additionally the new modulus can be iterated in self-similar spaces, which is a crucial, and novel, step in our argument.
\end{abstract}

\noindent {\small \emph{Key words and phrases}: {Conformal dimension, Assouad Dimension, Ahlfors Regular, self-similar, combinatorial Loewner property, Modulus}}

\medskip

\noindent {\small Mathematics Subject Classification (2020): {Primary: 30L10, 51F99}} 
\tableofcontents

\section{Introduction}

In this paper we study the equivalence of three different versions of the definition of the conformal dimension. First, recall the definition of a quasisymmetry.

\begin{definition}\label{def:quasisym} We say that a homeomoprhic map $f:X\to Y$ is a quasisymmetry, if there exists a homeomorphism $\eta:[0,\infty)\to[0,\infty)$ so that for all $x,y,z\in X$ with $x\neq z$, we have
\begin{equation}\label{eq:quasisym}
\frac{d(f(x),f(y))}{d(f(x),f(z))}\leq \eta\left(\frac{d(x,y)}{d(x,z)}\right).
\end{equation}
We write $X\sim_{q.s.}Y$, if there exists a quasisymmetry $f:X\to Y$. We say that $f$ is an $\eta$-quasisymmetry, if it satisfies \eqref{eq:quasisym} with this specific function $\eta$.
\end{definition}

We consider the effect of quasisymmetries on the a) Hausdorff dimension, b) Assouad dimension and c) Ahlfors-regularity of the space. We briefly recall the definitions of these. If $s\in [0,\infty)$, we define the Hausdorff $s$-content (at scale $\delta\in(0,\infty]$) of $A\subset X$ as
\begin{equation}\label{def:Hausdorffcontent}
\cH^s_\delta(A)=\inf\left\{\sum_{i\in \N} \diam(A_i)^s : A\subset \bigcup_{i\in \N} A_i, \diam(A_i)\leq \delta \right\}.
\end{equation}
For future reference, also define Hausdorff measure by
\[
\cH^s(A):=\lim_{\delta\to 0} \cH^s_\delta(A).
\]
The Hausdorff dimension of $X$ can be defined as
\[
\dims_H(X)=\inf \{s>0: \cH^s_\infty(X)=0\}.
\]
Hausdorff dimension is not stable when sequences of spaces ``converge'' (e.g. in the Gromov-Hausdorff sense), and this is one reason to introduce Assouad dimension. If $A\subset X$ is a subset, let 
\begin{equation}\label{eq:coveringnumber}
N(A,r)=\inf \{N : \exists x_1,\dots, x_N, A \subset \bigcup B(x_i,r)\}.
\end{equation}
Define the Assouad dimension in terms of the scale-invariant asymptotic behaviour of this quantity.
\[
\dims_A(X)= \inf \{s>0 : \exists C>0, \forall R>r>0, \forall z\in X, N(B(z,R),r)\leq CR^sr^{-s}.\}
\]

Finally, a special setting, where $\dims_A(X)=\dims_H(X)$ is when $X$ is Ahlfors regular for some $Q>0$. We say that $X$ is $Q$-Ahlfors regular, if there exists a Radon measure $\mu$ on $X$ and some constant $C\geq 1$, with
\[
C^{-1}r^Q \leq \mu(B(z,r))\leq C r^Q.
\]
for every $z\in X$ and $r\in (0,\diam(X))$. In this case, $Q=\dims_A(X)=\dims_H(X)$. Even if the space is not Ahlfors regular, we always have the inequality $\dims_H(X)\leq \dims_A(X)$, where the inequality may be strict. Further, while not every space is $Q$-Ahlfors regular, spaces often can be deformed into such. A more detailed discussion on this and the three notions of dimension, as well as conformal dimension in general, is given in \cite[Chapter 2]{MTbook}.

 An example of a quasisymmetric map is the identity map $(X,d)\to (X,d^\theta)$, for $\theta\in (0,1)$. Such a map increases all of the three notions of dimension. It is considerably harder to decrease dimension, and one is led to defining three quasisymmetric invariants.
\begin{align*}
\text{Conformal Hausdorff dimension} \quad & \dims_{CH}(X)=\inf\{\dims_H(Y) : X\sim_{q.s.} Y\} \\
\text{Conformal Assouad dimension} \quad & \dims_{CA}(X)=\inf\{\dims_A(Y) : X\sim_{q.s.} Y\} \\
\text{Ahlfors regular conformal dimension} \quad & \dims_{CAR}(X)=\inf\{Q=\dims_H(Y) : X\sim_{q.s.} Y, Y \text{ is  $Q$-Ahlfors regular} \}.
\end{align*}
The first of these was defined in \cite{pansu}, while the final one was used in \cite{bourdonpajot}. The Assouad variant appeared already in \cite{keithlaakso}. 

In general, $\dims_{CH}(X)\leq \dims_{CA}(X)\leq \dims_{CAR}(X)$. For uniformly perfect spaces $\dims_{CAR}(X)=\dims_{CA}(X)$, see \cite[Proposition 2.2.6.]{MTbook} and \cite[Chapters 14 and 15]{hei01}. The relationship between $\dims_{CH}(X)$ and $\dims_{CA}(X)$ has so far not been studied in detail, beyond giving simple examples such as the following, when they are not equal.

\begin{example} Let $X=\Z\times \R$. The conformal Assouad dimension can only drop under blowing the space down, and thus $\dims_{CA}(X)\geq \dims_{CA}(\R^2)=2$. The latter follows since the topological dimension of the plane is $2$, and the Hausdorff dimension is always greater than the topological dimension. However, $\dims_{CH}(X)= \dims_H(X)=1$.

If we set $X=\Z\times \R \cup \R\times \Z$, we can even make $X$ connected without altering the previous argument. It is possible to make the space compact and connected as well: Let $X=(\{\frac{1}{n} : n\in\N\}\cup\{0\} )\times [0,1] \cup  [0,1] \times (\{\frac{1}{n}: n\in\N\}\cup\{0\} )$. In this case, a blow-up of the space is $\R^2$.
\end{example}

Assouad dimension involves a scale-invariant quantitative condition, while Hausdorff dimension is merely a qualitative statement on the dimension of the space. 
Further, as the previous example indicates $\dims_{CA}(X)$ has stability properties under limits, while $\dims_{CH}(X)$ does not. This means, that one may only hope for their equality in the case where one assumes some form of self-similarity. Consequently Mathav Murugan asked if the different definitions of conformal dimension agree for self-similar spaces \cite{murugan2022conformal}. Our main theorem answers this intuition in the affirmative. The notion of quasiself-similarity is given in Definition \ref{def:appxquasi}, and (to our knowledge) was introduced in \cite{piaggio2011jauge}.

\begin{theorem}\label{thm:mainthm}
Let $X$ be a compact quasiself-similar metric space, which is connected and locally connected. Then, 
\[
\dims_{CH}(X)=\dims_{CA}(X)=\dims_{CAR}(X).
\]
\end{theorem}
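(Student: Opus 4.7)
The general inequalities give $\dims_{CH}(X) \le \dims_{CA}(X) \le \dims_{CAR}(X)$, so the plan is to prove the reverse inequality $\dims_{CAR}(X) \le \dims_{CH}(X)$. Fix any $p > \dims_{CH}(X)$; the goal is to show $\dims_{CAR}(X) \le p$ and then let $p \downarrow \dims_{CH}(X)$.

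By definition of $\dims_{CH}$, there is a quasisymmetric image $(X, d')$ with $\dims_H(X, d') < p$, so the $p$-Hausdorff content $\cH^p_\delta$ on $(X, d')$ is small at every scale $\delta$. Following the strategy announced in the abstract, I would introduce a new combinatorial $p$-modulus $\mathcal M_p(\cF, \cU)$ on covers $\cU$ and curve families $\cF$, positioned between the Pansu-Tyson Carath\'eodory-type modulus and the Keith-Laakso combinatorial modulus. The defining requirements are that $\mathcal M_p$ simultaneously admits (a) upper bounds in terms of Hausdorff content, via the Pansu-Tyson Carath\'eodory construction, and (b) lower bounds by the Keith-Laakso combinatorial modulus, so that smallness of $\mathcal M_p$ forces smallness of Keith-Laakso modulus, which is exactly the input needed for a Carrasco-Piaggio deformation. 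Via (a), the small Hausdorff content of $(X, d')$ produces a single-scale bound $\mathcal M_p(\cF_0, \cU_0) \le \varepsilon$, for some distinguished family $\cF_0$ of curves joining two well-separated continua; connectedness and local connectedness of $X$ guarantee enough such curves exist.

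The crux is to propagate this one bound to \emph{all} scales by iterating along the quasiself-similar structure. By definition, every sufficiently small piece of $X$ is mapped with uniformly bounded metric distortion onto a piece of $X$ of fixed size after rescaling. The hybrid modulus $\mathcal M_p$ is engineered so that its value on a small rescaled piece is controlled by its value on the fixed-size image, and so that it is submultiplicative under repeated application of this identification. This yields a geometric decay $\mathcal M_p(\cF_k, \cU_k) \le C \varepsilon^k$ at arbitrarily fine scales $k$. This iteration is the main obstacle: Keith-Laakso modulus does not iterate cleanly because gluing estimates lose multiplicative constants, whereas Pansu-Tyson modulus iterates well but lacks the lower bounds needed later; the entire point of the hybrid modulus is to satisfy both properties simultaneously.

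Finally, from uniform smallness of $\mathcal M_p$ across all scales, property (b) transfers this smallness to the Keith-Laakso modulus, and the Keith-Laakso/Bourdon-Kleiner/Carrasco-Piaggio construction produces a quasisymmetric metric on $X$ carrying an Ahlfors $p'$-regular measure for some $p' \le p$. This gives $\dims_{CAR}(X) \le p$, completing the argument. Once the hybrid modulus is correctly defined and shown to satisfy (a), (b), and submultiplicativity under the quasiself-similar iteration, the remaining steps assemble existing machinery from the Pansu-Tyson, Keith-Laakso and Carrasco-Piaggio frameworks; the novelty is entirely in the iterability of $\mathcal M_p$.
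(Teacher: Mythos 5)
Your high-level plan matches the paper's: reduce to showing $\dims_{CAR}(X)\le p$ for every $p>\dims_{CH}(X)$, introduce a discrete modulus sitting between the Pansu--Tyson and Keith--Laakso versions, bound it above by the Hausdorff content of a quasisymmetric image $Y$ with $\dims_H(Y)<p$, transfer smallness to the Keith--Laakso side, and finish with Carrasco-Piaggio (Theorem~\ref{thm:confhausdim}). The gap is precisely in the step you flag as ``engineered'' and ``submultiplicative under repeated application'' --- this is the substance of the proof, and your description of the mechanism is not the one that works. You picture iterating by comparing the modulus on a small rescaled piece to its value on the fixed-size quasi-self-similar image. The paper does something different: it establishes a \emph{uniformly small moduli of annuli} property (Lemma~\ref{lem:Hausdorff}), namely that every annulus $A(x,r,(\tau-2)r)$ admits a ball cover $\cV_{x,r}$ and an admissible weight with $p$-energy below a fixed $\epsilon<1$ and with all radii in $[\delta_- r,\delta_+ r]$, obtained by pulling back a cheap Hausdorff cover of $Y$ through $g\circ f$ where $f$ is the quasi-self-similar map. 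Iteration is then a \emph{replacement of covers} (Lemma~\ref{lem:replaceannulus}): each ball $\mathbf B$ of the current cover is replaced by $\cB_{\mathbf B}$ and the weights multiplied. For this to preserve admissibility, the admissibility condition itself has to be redesigned: it requires domination over subfamilies of balls whose $\tau$-inflations are pairwise disjoint (Definition~\ref{def:stronglyadmissible}). Without that disjointness constraint, the replaced balls overlap uncontrollably and the chain rule for admissibility fails. This is the missing idea; ``submultiplicativity'' of a modulus is not a property you can posit and hope for --- it forces the admissibility condition.

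A second gap is the scale mismatch. Your step (b), which says that smallness of the hybrid modulus forces smallness of Keith--Laakso modulus, corresponds to Proposition~\ref{prop:discretemodulus} in the paper, and that proposition only holds when the cover $\cV$ lives at essentially a single scale comparable to the $\kappa$-approximation $\cU$. But the replacement algorithm produces covers with balls of wildly mixed radii. One therefore needs a second phase of the iteration (the ``size reduction'' in Lemma~\ref{lem:equalizingalg}), which keeps pushing down only the balls that are still too large until all radii lie in $[\kappa^{-1}r,r]$, using the lower bound $\delta_-$ on radius ratios to ensure the radii do not collapse below the target scale. This two-phase structure --- weight reduction followed by size equalization --- is what lets you apply Theorem~\ref{thm:confhausdim}, which is stated in terms of $\kappa$-approximations at fixed levels $2^{-k}$. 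Your proposal omits it, and without it the transfer to the Keith--Laakso modulus does not go through.
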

As stated, the equality $\dims_{CA}(X)=\dims_{CAR}(X)$ for uniformly perfect spaces was already known, and follows directly from
\cite[Proposition 2.2.6]{MTbook}. Our contribution is to prove $\dims_{CH}(X)=\dims_{CA}(X)$. Indeed, this equality has many further consequences. One may define a zoo of other conformal dimensions, such as: conformal upper and lower Minkowski dimension, conformal packing dimension... Since these dimensions lie between the Hausdorff dimension and the Assouad dimension, one gets equality for the corresponding notions of conformal dimension as well.

The only other result, which states equality of $\dims_{CH}(X)$ with $\dims_{CAR}(X)=\dims_{CA}(X)$ is that of \cite[Theorem 3.4]{tysonconfdim} and \cite[Proposition 2.9.]{pansu}, which apply when $X$ is $Q$-Ahlfors regular and possesses a curve family with positive continuous $Q$-modulus. We will discuss this further below. We are not aware of any other instances, where equality of all notions has been shown.
 
A concrete corollary of Theorem \ref{thm:mainthm} is the following new result. The $n$-dimensional Sierpi\`nski sponge $M_n$ is obtained by iteratively subdividing the side of an $n$-dimensional cube by three, and removing the central cube.

\begin{corollary}\label{cor:sierpinski}Let $n\geq 2$. If $M_n$ is an $n$-dimensional Sierpsi\`nski sponge, then 
\[
\dims_{CH}(M_n)=\dims_{CA}(M_n)=\dims_{CAR}(M_n).
\]
\end{corollary}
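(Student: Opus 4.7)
The plan is to deduce Corollary \ref{cor:sierpinski} as a direct application of Theorem \ref{thm:mainthm}. It suffices to verify that $M_n$, equipped with the restriction of the Euclidean metric from $\R^n$, satisfies the three hypotheses: (i) compactness, (ii) quasiself-similarity, and (iii) connectedness together with local connectedness.

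For (i), note that $M_n$ is the nested intersection $\bigcap_{k\ge 0} M_n^{(k)}$, where $M_n^{(k)}\subset [0,1]^n$ is the compact set obtained after $k$ steps of the subdivision-and-removal construction, so $M_n$ is a closed subset of $[0,1]^n$ and thus compact. For (ii), observe that $M_n$ is the attractor of the iterated function system $\{\varphi_i\}_{i\in I}$ consisting of the $3^n-1$ contractions of $\R^n$ of ratio $1/3$ whose images are precisely the retained sub-cubes at the first stage, and it satisfies the natural open set condition. Hence $M_n$ is strictly self-similar, in particular approximately self-similar, and the abstract paragraph recalls that approximately self-similar spaces are quasiself-similar in the sense of Definition \ref{def:appxquasi}.

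For (iii), I would argue as follows. The one-skeleton of the standard cubical decomposition of $[0,1]^n$ is contained in $M_n^{(k)}$ for every $k$ (removing the central sub-cube at each step does not affect the faces of the ambient cube or of its sub-cubes), and for $n\geq 2$ this skeleton is arcwise connected. Consequently each $M_n^{(k)}$ is a compact connected set, and by the standard fact that a nested intersection of compact connected sets is connected, $M_n$ itself is connected. Local connectedness follows from the self-similar structure: any point $x\in M_n$ admits a decreasing sequence of neighborhoods each of which is a finite union of scaled copies of $M_n$ meeting along common boundary faces, and each such union is connected by the previous argument. An alternative route is to invoke the general fact (Whyburn) that a connected, locally connected compactum is preserved under such IFS limits when the pieces overlap along connected boundary sets, which is the case here.

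The main obstacle is essentially step (iii), namely local connectedness; everything else is immediate from the construction. Once (i)--(iii) are established, Theorem \ref{thm:mainthm} applies verbatim and yields $\dims_{CH}(M_n)=\dims_{CA}(M_n)=\dims_{CAR}(M_n)$.
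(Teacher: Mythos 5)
Your route is exactly the one the paper intends: apply Theorem \ref{thm:mainthm} after checking that $M_n$ is compact, connected, locally connected, and quasiself-similar. The paper does not write out this verification; it only remarks, after Theorem \ref{thm:mainthmcombloew}, that one could alternatively use the fact that the Sierpi\'nski sponges are $p$-CLP spaces, citing \cite{bourdonkleiner} for that.

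The one step you pass over too quickly is ``strictly self-similar, in particular approximately self-similar.'' Being an IFS attractor with the open set condition does not by itself yield approximate self-similarity in the sense of \cite{bourdonkleiner}, which demands a uniformly bi-Lipschitz map from \emph{every} rescaled ball to an open subset of definite diameter. The problem is balls $B(x,r)$ that straddle the faces of the triadic subdivision: such a ball is not contained in any single cylinder set $\varphi_w([0,1]^n)$, so the naive inverse similarity $\varphi_w^{-1}$ does not supply the required map. For $M_n$ the conclusion does hold, but the actual mechanism is the ``finitely many patterns at all scales'' observation: the local picture of any ball, up to a triadic rescaling and an ambient Euclidean isometry, belongs to a finite list of configurations, each of which reappears at every scale somewhere in $M_n$. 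This is precisely what is exploited for the carpet and sponge examples in \cite{bourdonkleiner}, and you should either cite it or give the short argument; it is not a formal consequence of the IFS structure alone. Your treatment of compactness, connectedness via nested intersections, and local connectedness is fine; the appeal to Whyburn in your last sentence is unnecessary and a bit off-target (Whyburn's theorem is a topological characterization of the planar carpet, not a general local-connectedness criterion for IFS limits), but the direct argument preceding it already suffices.
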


We will also give a result for non-self-similar spaces, where self-similarity is replaced with the combinatorial Loewner property (CLP) from \cite{bourdonkleiner, clais}; see Section \ref{sec:combloew} for a definition. It is worth noting, that this assumption usually is verified in the self-similar setting, and thus is not so much more general than Theorem \ref{thm:mainthm}. We present this here, since the argument for it is a bit simpler than for the general self-similar case.  Further, it is worth to record a proof for this result here, since the developed tools may be useful in tackling the question of Bruce Kleiner, which asks if self-similar combinatorially Loewner spaces are quasisymmetric to Loewner spaces; see \cite{kleinericm} for further background and the question. 
\begin{theorem}\label{thm:mainthmcombloew}Let $p\in (1,\infty)$.
Let $X$ be a compact, doubling and LLC space, which is $p$-combinatorially Loewner metric space. We have
\[
\dims_{CH}(X)=\dims_{CA}(X)=\dims_{CAR}(X)=p.
\]
\end{theorem}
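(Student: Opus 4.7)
The plan is to reduce the theorem to proving $\dims_{CH}(X) \geq p$. The upper bound $\dims_{CAR}(X) \leq p$ for $p$-combinatorially Loewner spaces is due to Bourdon--Kleiner and Carrasco-Piaggio, while the LLC and doubling hypotheses make $X$ uniformly perfect, so $\dims_{CA}(X) = \dims_{CAR}(X)$ follows from \cite[Proposition 2.2.6]{MTbook} as noted after Theorem \ref{thm:mainthm}. Combined with the general inequality $\dims_{CAR}(X) \geq p$ that is the content of the CLP framework, this pins down both $\dims_{CA}$ and $\dims_{CAR}$ at $p$. Since $\dims_{CH}(X) \leq \dims_{CA}(X)$ is automatic, only the lower bound $\dims_{CH}(X) \geq p$ is new.

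To obtain this lower bound I argue by contradiction. Suppose $f\colon X\to Y$ is an $\eta$-quasisymmetry with $\dims_H(Y) < s < p$. By the definition of Hausdorff dimension, for every $\delta,\eps>0$ there is a cover $\{A_i\}_{i\in I}$ of $Y$ by sets of diameter at most $\delta$ with $\sum_i \diam(A_i)^s \leq \eps$. Pulling back via $f^{-1}$ gives a cover $\{f^{-1}(A_i)\}_{i\in I}$ of $X$, and the quasisymmetric distortion controls $\diam(f^{-1}(A_i))$ quantitatively in terms of $\diam(A_i)$ and the local scale at which $f^{-1}$ is applied. This covering data is precisely what the Pansu--Tyson ``upper bound half'' of the new combinatorial modulus $M_p$ introduced in this paper is designed to consume: assembling these covers across scales produces an admissible Carathéodory package bounding $M_p(\Gamma)$ from above by a sum of the form $\sum_i \diam(f^{-1}(A_i))^p$. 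Using the elementary estimate $\sum_i \diam(f^{-1}(A_i))^p \lesssim \bigl(\sup_i \diam(f^{-1}(A_i))\bigr)^{p-s} \sum_i \diam(f^{-1}(A_i))^s$ and then sending $\delta\to 0$ while choosing $\eps$ suitably small, this bound tends to zero. Hence $M_p(\Gamma)=0$ for every curve family $\Gamma$ in $X$.

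On the other hand, the Keith--Laakso ``lower bound half'' of the new modulus dominates, from above, the usual combinatorial $p$-modulus at every scale. The CLP assumption then supplies, for a well-chosen pair of nondegenerate, well-separated continua $E_1, E_2 \subset X$, a curve family $\Gamma_0$ joining $E_1$ to $E_2$ whose combinatorial $p$-modulus is uniformly bounded below at all scales, forcing $M_p(\Gamma_0) > 0$ and contradicting the vanishing derived above. The main obstacle is making the two halves of $M_p$ actually meet: the Pansu--Tyson half must be defined broadly enough to admit the quasisymmetric pullbacks of Hausdorff-efficient covers of $Y$ as admissible tests, and the Keith--Laakso half must be restrictive enough that its positivity is inherited from single-scale combinatorial lower bounds. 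Balancing the quasisymmetric distortion of $f^{-1}$ against the $s$-content budget, uniformly across scales and across admissible families, is where the technical heart of the argument lies.
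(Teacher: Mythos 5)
The high-level structure of your argument is close to the paper's: reduce to $\dims_{CH}(X)\geq p$, bound the new modulus above using Hausdorff covers and below using CLP, and derive a contradiction. But there is a genuine gap in the upper-bound step, and the paper resolves it by a different organizational choice.

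The gap is in the pullback. After pulling back the cover $\{A_i\}$ of $Y$ to $\{f^{-1}(A_i)\}$ on $X$, you invoke the estimate
\[
\sum_i \diam\bigl(f^{-1}(A_i)\bigr)^p \lesssim \Bigl(\sup_i \diam\bigl(f^{-1}(A_i)\bigr)\Bigr)^{p-s}\sum_i \diam\bigl(f^{-1}(A_i)\bigr)^s ,
\]
and send $\delta\to 0$. The first factor indeed tends to zero, but the second factor $\sum_i \diam(f^{-1}(A_i))^s$ is not bounded: you only control $\sum_i \diam(A_i)^s$, and a quasisymmetry may enlarge diameters in an uncontrolled absolute sense (quasisymmetries only control relative distortion). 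In fact a uniform bound on $\sum_i \diam(f^{-1}(A_i))^s$ would imply $\dims_H(X)\leq s<p$, which is exactly the kind of dimension drop the theorem rules out — so this quantity must diverge as $\delta\to 0$, making the right-hand side indeterminate. You flag the balancing issue as the ``technical heart,'' but no mechanism in the proposal closes it.

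The paper avoids this entirely by transferring the CLP structure to $Y$ rather than pulling covers back to $X$. Since the combinatorial Loewner property is a quasisymmetric invariant (Bourdon--Kleiner), $Y$ is itself a $p$-CLP, LLC, doubling space. Proposition \ref{prop:Hausdimquant} is then applied directly to $Y$, yielding a lower bound $\cH^p(B(y,r))\geq Cr^p>0$ proved wholly intrinsically: one chooses continua $E,F$ in $Y$, cuts off escaping curves to stay inside a ball, bounds $\overline{\Mod}_{p,\tau}$ above by $\cH^p_\delta$ of that ball via Proposition \ref{prop:modulusboundhausdorff}, and bounds $\Mod_{p,\cU}$ below via the CLP assumption, linking the two via Proposition \ref{prop:discretemoduluscombloew}. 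No quasisymmetric distortion enters these estimates, because no map is applied within them. This gives $\dims_H(Y)\geq p$ directly for every $Y\sim_{q.s.}X$. To repair your argument you would need to replace the cover pullback by the quasisymmetric invariance of the new modulus (Lemma \ref{lem:quasiinv}), which pushes the curve family forward to $Y$ and computes the energy there with respect to the $Y$-diameters of the cover — not by estimating $\diam(f^{-1}(A_i))$.
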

We note that Corollary \ref{cor:sierpinski} would also follow from this result, since Sierpi\`nski sponges are $p$-combinatorially Lowener spaces; see the proofs in \cite{bourdonkleiner}. In the course of the proof of Theorem \ref{thm:mainthmcombloew} we will in fact present some stronger results for CLP spaces in Section \ref{sec:combloew}. In fact, while the statement $\dims_{CH}(X)=p$ is qualitative, we will give a quantitative statement, Proposition \ref{prop:Hausdimquant}, which gives a lower bound for the Hausdorff measure of the images of balls under quasisymmetries. This inequality may be useful in other settings as well, and is a generalization of an inequality which appeared in the work of Heinonen and Koskela \cite[Theorem 3.6]{heinonenkoskela}.

Our results here clarify a central point of ambiguity in much of the literature on conformal dimension, where the equality of the conformal Hausdorff and conformal Assouad dimensions is not addressed, but rather avoided and bypassed. We next describe the main idea of the proof.

The key tool in a majority of the research on conformal dimension is a notion of modulus - in particular discretized versions of moduli of path families. These generalize the notion of continuous modulus (later, often, modulus), see e.g. \cite{fuglede, hei01,shabook} for background. Our proof is also based on defining a new type of discrete modulus - or rather, discrete admissibility - and relating it to conformal Hausdorff dimension. At this point, there are several variants of discrete modulus, each with its own setting and application see e.g. \cite{pansu,tysonconfdim,keithlaakso, carrasco, shanmunconf, murugan2022conformal,jeffmod,abppcw:ecgd2015}. (There are also other notions, such as trans-boundary modulus, see e.g. \cite{schramm,bonkmerenkov}, but these are not relevant for our discussion here.)  We will not discuss all these moduli here, but will focus on those which motivate our approach.

The motivation for our argument and notion of modulus comes from a result of Pansu \cite[Proposition 2.9.]{pansu}, whose dual formulation\footnote{As a side note, we remark that Pansu considers measures on families of curves, while Tyson uses the notion of curve modulus in \cite{tysonconfdim}. These two notions are roughly dual to each other, see e.g. \cite{dualitymodulus, david2020infinitesimal} for more precise statements.} was given by Tyson in \cite[Theorem 3.4]{tysonconfdim}. Tyson shows that if $X$ is a $Q$-Ahlfors regular metric measure space, and if it possesses a family of curves $\Gamma$ with positive continuous modulus, then $\dims_{CH}(X)=Q$. The proof of Tyson uses the discrete $Q$-modulus as introduced by Pansu in \cite{pansu}. To be very brief, this modulus is defined using a Carath\`eodory construction and involves discrete sums. One shows, both in \cite{pansu} and \cite{tysonconfdim}, that the discrete $Q$-modulus is bounded from below by the continuous $Q$-modulus. Further, the discrete modulus, up to a variation of parameters, is invariant under quasisymmetries. The final nail in the coffin of the proof is that if $\dims_H(Y)<Q$, then the discrete $Q$-modulus vanishes on $Y$. Consequently, a family of curves with positive continuous $Q$-modulus  obstructs lowering the dimension of $X$ by a quasisymmetry below $Q$.  

The previous proof relies heavily on the fact that we can use the notion of continuous modulus to give a lower bound for discrete modulus. In many settings, such as the Sierpi\'nski sponges mentioned above, the continuous moduli of all curves vanishes. Thus, we lack this lower bound, and we need find a way around this by giving a lower bound using a different quantity. In the quasiself-similar setting, and in the combinatorially Loewner setting, we can obtain this lower bound by slightly different mechanisms - and by employing a different modulus.

In the work \cite{keithlaakso, carrasco,murugan2022conformal}, the inability to lower the dimension can be converted to a lower bound on some moduli - see Theorem \ref{thm:confhausdim} for a precise statement. Thus, one can use their result to obtain a lower bound for a different discrete modulus, which we call the Keith-Laakso modulus and which is defined in subsection \ref{subsec:discretemoduli}.  In the case of combinatorially Loewner spaces, the setting is a bit simpler and the lower bound is obtained directly by the assumption that the space is combinatorially Loewner  \cite{bourdonkleiner,clais}: see Definition \ref{def:combloew}. 

At this juncture, we have two moduli: the Keith-Laakso modulus and that of Pansu and Tyson. For the first we can obtain lower bounds. For the second, one can show upper bounds. Indeed, for Pansu and Tyson, the notion of discrete modulus is such that it is very easy to prove that if $\dims_H(Y)<Q$, the discrete modulus vanishes. In the absence of $Q$-Ahlfors regularity, it is harder to give lower bounds for the modulus of Pansu and Tyson. On the other hand, for the Keith-Laakso modulus, one lacks the ability to give good upper bounds and thus to directly say that the discrete modulus vanishes if one has Hausdorff dimension lower than $Q$.

The reason for this inability is the following technical, but crucial point. The definition of Keith-Laakso modulus can be summarized as assigning a value $\Mod_p^{KL}(\Gamma, \cU)$ for a specific curve family $\Gamma$ and a cover $\cU$ of $X$, which \cite{bourdonkleiner} calls a $\kappa$-approximation \emph{at some level $r$}. The key feature of their $\kappa$-approximations is that all sets in $\cU$ have roughly the same size. (See Subsection \ref{subsec:kappaapprox} for details.) This is also a key difference with the work in \cite{pansu,tysonconfdim}, since there the Carath\'eodory construction involves \emph{arbitrary covers}. 

Similarly, Assouad dimension involves covering the space by balls of the same size, whereas Hausdorff dimension involves coverings by sets of various sizes. To give estimates for Hausdorff dimension, we need to allow arbitrary covers in the definition of discrete modulus. We bridge this gap, by introducing a new notion of modulus $\overline{\Mod}_p(\Gamma,\cU)$ which lies between those of Pansu and Tyson in \cite{pansu, tysonconfdim} and Keith, Laakso and others in \cite{keithlaakso, carrasco,murugan2022conformal}. First, we get more flexibility by allowing arbitrary covers $\cU$ that consist of balls (or, in general, sufficiently round sets). This forces us to introduce a new admissibility condition, to address several key technical issues. Similar to Pansu's discrete modulus, we can show that if $\dims_H(Y)<Q$, then this modulus is very small for a given cover. Further, in the self-similar and CLP space settings, we can relate the Keith-Laakso modulus and the new modulus to each other.

For combinatorially Loewner spaces, the story is easier to finish. One can bound $\overline{\Mod}_p(\Gamma,\cU)$ from below using the Keith-Laakso modulus, which in term has a lower bound from the combinatorial Loewner assumption. This estimate is given in Proposition \ref{prop:discretemoduluscombloew}. This gives a contradiction to the previous paragraph's conclusion of $\overline{\Mod}_p(\Gamma,\cU)$ being small. In fact, this argument is somewhat easier to discover,  and it served as a starting point for this paper and project. For this reason we also include the argument in this paper. Quickly, however, the author realized that a more technical version of the argument could be applied for general quasiself-similar spaces.

For \emph{quasiself-similar spaces} the argument is a bit different. Instead of directly using a lower bound, we use the fact that the ability to lower dimension gives an upper bound. Indeed, if there is a quasisymmetric map $f:X\to Y$ and if $Y$ has small Hausdorff measure, then we obtain a quantitative statement on moduli of annuli, see Lemma \ref{lem:Hausdorff} and Proposition \ref{prop:smallmodulus}. Our quantitative statement can be converted algorithmically by using iteration to a statement on the smallness of the Keith-Laakso modulus.  This allows us to prove the equality $\dims_{CA}(X)=\dims_{CH}(X)$ for quasiself-similar spaces by using the result of Carrasco-Paggio, which we state below in Theorem \ref{thm:confhausdim}. The iteration is algorithmic, but quite technical. The basic step of the iteration involves ideas from the proof of the result for CLP spaces. We will describe it in more detail in Subsection \ref{subsec:algorithm}.


\subsection{Outline}

We will present some general terminology in Section \ref{sec:notation}. Then, in Section \ref{sec:discretemoduli} we introduce the different notions of discrete modulus needed in this paper, and present some known results on their relationships with the conformal dimension. For technical reasons, we will use mostly a variant of this modulus, the Bourdon-Kleiner modulus defined in \cite{bourdonkleiner}, instead of the Keith-Laakso modulus. However, we will relate the two moduli to each other. In Subsection \ref{subsec:newmod}, we give the new modulus that is key to the approach of this paper. In Section \ref{sec:combloew} we focus on CLP spaces. There, we prove Theorem \ref{thm:mainthmcombloew}, which is the equality of the definitions of conformal dimension for CLP spaces. In the process, we give some useful stronger results on discrete moduli, and precise quantitative estimates, which hold for CLP spaces. In Section \ref{sec:quasiselfsim} we focus on quasiself-similar spaces. There, we study moduli of annuli, and give an push-down algorithm to adjust the scale of covers. This is then used to give a relationship between the two moduli used. Finally, in subsection \ref{subsec:proofmain} we collect the pieces and complete the proof of Theorem \ref{thm:mainthm}.

\section{Notation and Basic properties}\label{sec:notation}

\subsection{Basic terminology}

A compact metric space will be denoted $X$, its metric $d$, and open balls within it $B(z,r):=\{w\in X: d(z,w)<r\}$ for $z\in X, r>0$. An inflation of a ball $B=B(z,r)$ is denoted $CB:=B(z,Cr)$ for $C>0$. Note that we consider each ball as having an associated center and radius -- and it may happen that a different center and radius defines the same set. The radius of a ball is denoted $\rad(B)$. Diameters of sets $A\subset X$ will be denoted $\diam(A)=\sup_{a,b\in A} d(a,b)$. A curve is a continuous map $\gamma:I\to X$, where $X$ is a non-empty compact interval in $\R$. We often conflate $\gamma$ and it's image set $\image(\gamma)$.

Recall the definition of $N(A,r)$ from \eqref{eq:coveringnumber}. We say that a metric space $X$ is metrically doubling, if there exists a constant $D\geq 1$, so that $N(B(z,r),r/2)\leq D$ for every $z\in X$ and $r>0$. 

We will need some connectivity properties. A space $X$ is called locally connected, if it has a neighborhood basis consising of connected open sets. A metric space is LLC, if for every $x,y\in X$, there exists a curve $\gamma$ with $x,y\in \gamma$ and $\diam(\gamma)\leq C d(x,y)$.

We will consider collections of balls, which are often denoted by a script letter $\cB$. For these, we define unions by setting $\bigcup \cB:=\bigcup_{B\in \cB} B$, inflations by setting $C\cB:=\{CB: B\in \cB\}$ and radii $\rad(\cB)=\sup_{B\in \cB} \rad(B)$. If $A$ is any finite set, we denote by $|A|$ its cardinality.

\subsection{Relative distance and quasisymmetries}

We need some standard results on quasisymmetries.
\begin{lemma}\label{lem:quasisyminv} If $f:X\to Y$ is an $\eta$-quasisymmetry, then $f^{-1}$ is a $\tilde{\eta}$-quasisymmetry with $\tilde{\eta}(t)=\left(\eta^{-1}(t^{-1})\right)^{-1}$.
\end{lemma}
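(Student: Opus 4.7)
The plan is to unwind the quasisymmetry inequality \eqref{eq:quasisym} with the roles of the second and third points swapped, and then invert. Concretely, given three points $u,v,w\in Y$ with $u\neq w$, I would set $x=f^{-1}(u)$, $y=f^{-1}(v)$, $z=f^{-1}(w)$ so that $x\neq z$ in $X$. Applying \eqref{eq:quasisym} to the triple $(x,z,y)$ (i.e.\ using $z$ as the middle point and $y$ as the reference) yields
\[
\frac{d(u,w)}{d(u,v)}=\frac{d(f(x),f(z))}{d(f(x),f(y))}\leq \eta\!\left(\frac{d(x,z)}{d(x,y)}\right).
\]

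Next, I would apply $\eta^{-1}$ to both sides; this is legitimate because $\eta$ is by assumption a homeomorphism of $[0,\infty)$ onto itself, hence strictly increasing with a well-defined inverse. Rearranging gives
\[
\frac{d(f^{-1}(u),f^{-1}(v))}{d(f^{-1}(u),f^{-1}(w))}=\frac{d(x,y)}{d(x,z)} \leq \frac{1}{\eta^{-1}\!\left(d(u,w)/d(u,v)\right)} = \tilde{\eta}\!\left(\frac{d(u,v)}{d(u,w)}\right),
\]
which is exactly the $\tilde{\eta}$-quasisymmetry condition for $f^{-1}$. The degenerate case $d(u,v)=0$ forces $y=x$, so the left-hand side is zero and the inequality holds trivially.

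The only thing that still needs to be checked is that $\tilde{\eta}(t)=1/\eta^{-1}(1/t)$ is itself a homeomorphism of $[0,\infty)$. This is routine: $\tilde{\eta}$ is a composition of continuous strictly increasing functions, and the limits $\tilde{\eta}(0^+)=0$ and $\tilde{\eta}(\infty)=\infty$ follow from $\eta^{-1}(\infty)=\infty$ and $\eta^{-1}(0^+)=0$, with a standard continuous extension at $t=0$. There is no real obstacle in this argument; the only point that requires any care is keeping track of which point plays the role of ``reference'' in \eqref{eq:quasisym} when swapping, since choosing the wrong triple gives an estimate in the wrong direction and does not invert cleanly.
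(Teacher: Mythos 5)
Your argument is correct and is essentially the same as the paper's: apply the $\eta$-quasisymmetry inequality to the preimage triple with the roles of the second and third points swapped, then apply $\eta^{-1}$ and take reciprocals to obtain the $\tilde{\eta}$-condition for $f^{-1}$. You additionally spell out the degenerate case $u=v$ and the verification that $\tilde{\eta}$ is a homeomorphism of $[0,\infty)$, which the paper only records implicitly via the convention $\tilde{\eta}(0)=0$.
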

We note the convention that the value of $\tilde{\eta}$ at zero is given by $\tilde{\eta}(0)=0$. 
\begin{proof}[Proof of Lemma \ref{lem:quasisyminv}]
Let $x,y,z\in Y$ and let $x',y',z'\in X$ be such that $f(x')=x,f(y')=y,f(z')=z$. Since $f$ is an $\eta$-quasisymmetry, we have
\[
\frac{d(f(x'),f(z'))}{d(f(x'),f(y'))}\leq \eta\left(\frac{d(x',z')}{d(x',y')}\right).
\]
Taking resiprocals and an inverse function, we get
\[
\frac{d(x,y)}{d(x,z)}\leq \left(\eta^{-1}\left(\left(\frac{d(f(x'),f(y'))}{d(f(x'),f(z'))}\right)^{-1}\right)\right)^{-1}.
\]
Replacing $f(x'),f(y'),f(z')$ with $x,y,z$ and $x',y',z'$ with $f^{-1}(x),f^{-1}(y),f^{-1}(z)$ yields that $f^{-1}$ is an $\tilde{\eta}$-quasisymmetry.
\end{proof}

Let $X$ be a complete metric space. A continuum $E\subset X$ is a compact connected set. A continuum is non-degenerate, if it is non-empty. We define the relative distance between two non-degenerate continua $E,F$ as
\[
\Delta(E,F):=\frac{d(E,F)}{\min\{\diam(E),\diam(F)\}}.
\]

\begin{lemma} \label{lem:relativedist} Let $f:X\to Y$ be an $\eta$-quasisymmetry and let $E,F$ be two non-degenerate disjoint continua in $X$. Then,
\[
\frac{1}{2\eta(\Delta(E,F)^{-1})}\leq \Delta(f(E),f(F))\leq \eta(2\Delta(E,F)).
\]
\end{lemma}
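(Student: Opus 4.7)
The plan is to establish the upper bound by choosing points that realize the relevant extremal distances and applying the quasisymmetry inequality directly, then to derive the lower bound by dualizing via Lemma \ref{lem:quasisyminv}.

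For the upper bound, I would first pick $x\in E$ and $z\in F$ with $d(x,z)=d(E,F)$, which is possible by compactness. Since $E$ is compact there exist $a,b\in E$ with $d(a,b)=\diam(E)$, and the triangle inequality $\diam(E)=d(a,b)\leq d(x,a)+d(x,b)$ forces one of $a,b$, call it $y$, to satisfy $d(x,y)\geq \diam(E)/2$. Applying the $\eta$-quasisymmetry to the triple $(x,z,y)$ gives
\[
\frac{d(f(x),f(z))}{d(f(x),f(y))}\leq \eta\!\left(\frac{d(x,z)}{d(x,y)}\right)\leq \eta\!\left(\frac{2\,d(E,F)}{\diam(E)}\right).
\]
Since $d(f(E),f(F))\leq d(f(x),f(z))$ and $d(f(x),f(y))\leq \diam(f(E))$, this yields $d(f(E),f(F))/\diam(f(E))\leq \eta(2\,d(E,F)/\diam(E))$. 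Interchanging the roles of $E$ and $F$ gives the analogous bound with $\diam(f(F))$ and $\diam(F)$. Depending on which of $\diam(f(E)),\diam(f(F))$ is smaller I would select the corresponding inequality and then invoke monotonicity of $\eta$ to replace $\diam(E)$ (respectively $\diam(F)$) in the right-hand side by $\min(\diam(E),\diam(F))$; both cases collapse into $\Delta(f(E),f(F))\leq \eta(2\Delta(E,F))$.

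For the lower bound, I would apply the already-established upper bound to $f^{-1}$, which by Lemma \ref{lem:quasisyminv} is a $\tilde\eta$-quasisymmetry with $\tilde\eta(t)=1/\eta^{-1}(1/t)$. Taking the continua $f(E),f(F)\subset Y$ yields $\Delta(E,F)=\Delta(f^{-1}(f(E)),f^{-1}(f(F)))\leq \tilde\eta(2\Delta(f(E),f(F)))$. A short calculation shows $\tilde\eta^{-1}(s)=1/\eta(1/s)$, so rearranging produces $\Delta(f(E),f(F))\geq 1/(2\eta(\Delta(E,F)^{-1}))$, which is the required lower bound.

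I do not anticipate any serious obstacle; the only mildly subtle point is the bookkeeping around the asymmetric $\min$ in the definition of $\Delta$, which is cleanly resolved by the $E\leftrightarrow F$ symmetry and monotonicity of $\eta$. The rest is a standard compactness argument together with one application of the quasisymmetry inequality in each direction.
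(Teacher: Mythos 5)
Your proof is correct and follows essentially the same approach as the paper: for the upper bound you pick the pair realizing $d(E,F)$, use a diameter-half point to control the denominator, and apply the quasisymmetry inequality to a triple with two points in one continuum and one in the other; for the lower bound you apply the upper bound to $f^{-1}$ via Lemma \ref{lem:quasisyminv}. The only cosmetic difference is that the paper fixes $\diam(E)\leq\diam(F)$ at the outset so both ratio bounds are immediately $2\Delta(E,F)$, whereas you derive the two inequalities symmetrically and invoke monotonicity of $\eta$ at the end; both organizations are fine.
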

\begin{proof}
Assume by symmetry that $\diam(E)\leq \diam(F)$. Let $x\in E$ and $y\in F$ be such that $d(E,F)=d(x,y)$. Choose $u\in E, v \in F$ so that $d(x,u),d(y,v)\geq \diam(E)/2$. This is possible by connectivity. Then, we have
\[
\frac{d(x,y)}{d(x,u)}\leq 2\Delta(E,F) \quad \text{ and } \frac{d(y,x)}{d(y,v)}\leq 2\Delta(E,F)
\]

Let $x':=f(x),y'=f(y),u'=f(u), v'=f(v)$ be the image points in $Y$. We have, since $\eta$ is increasing and since $f$ is an $\eta$-quasisymmetry:
\begin{align*}
d(f(E),f(F))&\leq d(x',y') \\
&\leq \eta\left(\frac{d(x,y)}{d(x,u)}\right)d(x',u')\\
&\leq \eta\left(2\Delta(E,F)\right)\diam(f(E)).
\end{align*}
Similarly, 
\begin{align*}
d(f(E),f(F))&\leq d(y',x') \\
&\leq \eta\left(\frac{d(y,x)}{d(y,v)}\right)d(y',v')\\
&\leq \eta\left(2\Delta(E,F)\right)\diam(f(F)).
\end{align*}
The previous two inequalities combine to gives the inequality:
\[
\Delta(f(E),f(F))= \frac{d(f(E),f(F))}{\min\{\diam(f(E)),\diam(f(F))\}} \leq \eta\left(2\Delta(E,F)\right).
\]
Applying this to the inverse $f^{-1}$, which by Lemma \ref{lem:quasisyminv} is an $\tilde{\eta}$-quasisymmetric map, yields  the other inequality of the claim. 
\end{proof}

The following lemma will also prove useful on a few occasions. Note that the additional assumption on the existence on $y\in B(x,r)$ is automatically satisfied if $X$ is connected and $r<\diam(X)$.

\begin{lemma}\label{lem:ballinclusion} Let $f:X\to Y$ be a quasisymmetric map and let $B(x,r)$ be a ball in $X$ for which there exists a $y\in B(x,r)$ with $d(x,y)\geq r/2$. Then, for every $L\geq 1$, we have
\[
f(B(x,Lr))\subset B(f(x),\eta(2L)d(f(x),f(y))).
\]

\end{lemma}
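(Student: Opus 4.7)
The plan is a direct application of the quasisymmetric inequality \eqref{eq:quasisym} with the three points $x$, $y$ (the given point with $d(x,y) \ge r/2$), and $z$ an arbitrary point of $B(x,Lr)$. First, I would take an arbitrary $z \in B(x,Lr)$, so that $d(x,z) < Lr$, and combine this with the lower bound $d(x,y) \ge r/2$ to estimate the ratio
\[
\frac{d(x,z)}{d(x,y)} < \frac{Lr}{r/2} = 2L.
\]

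Next, I would apply the $\eta$-quasisymmetry condition with the triple $(x,z,y)$ (playing the roles of $x,y,z$ in Definition \ref{def:quasisym}), together with monotonicity of $\eta$, to conclude
\[
\frac{d(f(x),f(z))}{d(f(x),f(y))} \le \eta\!\left(\frac{d(x,z)}{d(x,y)}\right) \le \eta(2L).
\]
Rearranging gives $d(f(x),f(z)) \le \eta(2L)\, d(f(x),f(y))$, which is precisely $f(z) \in B(f(x),\eta(2L)\,d(f(x),f(y)))$. Taking the union over $z \in B(x,Lr)$ yields the claimed inclusion.

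There is no real obstacle here: the hypothesis that some $y \in B(x,r)$ satisfies $d(x,y) \ge r/2$ is exactly what is needed to turn the quasisymmetric control of ratios into a control on absolute diameters of images of balls, by giving a comparison point in $B(x,r)$ whose image distance $d(f(x),f(y))$ serves as the reference scale on the target side. The parenthetical remark following the statement, that this hypothesis is automatic when $X$ is connected and $r < \diam(X)$, follows because connectedness forces the sphere $\{w : d(x,w) = r/2\}$ (or any intermediate value) to be non-empty by an intermediate value / connectedness argument on the continuous function $w \mapsto d(x,w)$; this observation does not need to enter the proof itself.
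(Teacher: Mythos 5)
Your proof is correct and follows essentially the same argument as the paper: apply the quasisymmetric inequality to the triple $x,y,z$ for an arbitrary $z\in B(x,Lr)$, use $d(x,y)\geq r/2$ to bound the ratio by $2L$, and invoke monotonicity of $\eta$. No discrepancies.
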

\begin{proof}
Let $z\in B(x,Lr)$, and apply the $\eta$-quasisymmetry to the triple of points $x,y,z$. This gives
\[
\frac{d(f(x),f(z))}{d(f(x),f(y))}\leq \eta\left(\frac{d(x,z)}{d(x,y)}\right)\leq \eta(2L).
\]
Consequently, we get the claim from
\[
d(f(x),f(z))\leq \eta(2L)d(f(x),f(y)).
\]
\end{proof}

\subsection{Quasiself-similarity}

We define a notion of quasiself-similarity. This is motivated by the notion of approximate self-similarity discussed in \cite{bourdonkleiner}.

\begin{definition}\label{def:appxquasi} We say that a compact space $X$ is quasiself-similar, if there exists a homeomorphism $\eta:[0,\infty)\to[0,\infty)$ and a constant $\delta>0$ so that for $B(x,r)\subset X$ there is a $\eta$-quasisymmetry $f:B(x,r)\to U_{x,r}$ where $U_{x,r}\subset X$ is an open set with $\diam(U_{x,r})\geq \delta \diam(X)$.   We also say that $X$ is $\eta$-quasiself-similar, if this property holds for a given function $\eta$.
\end{definition}

The principal advantage of defining quasiself-similar spaces is that they are more general than approximately self-similar spaces. Further, quasiself-similarity is an invariant under quasisymmetries: if $X$ is quasiself-similar and $Y\sim_{q.s.} X$, then $Y$ is also quasiself-similar. The same fails for approximate self-similarity. 

We recall the following result \cite[Chapter 2]{piaggio2011jauge}.

\begin{lemma}\label{lem:quasiselfconn} If $X$ is a compact quasiself-similar space, which is connected and locally connected, then $X$ is LLC.
\end{lemma}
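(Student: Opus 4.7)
The plan is to combine the qualitative arcwise connectedness enjoyed by every compact Peano continuum with the quasi-self-similarity hypothesis in order to upgrade it to the linear bound required by LLC. Since $X$ is compact, connected and locally connected, the Hahn--Mazurkiewicz theorem says $X$ is a Peano continuum, and in particular it is uniformly locally arcwise connected: the function
\[
\Lambda(r) := \sup\{\inf\{\diam(\gamma) : \gamma \text{ is an arc in } X \text{ from } y \text{ to } z\} : y,z\in X,\, d(y,z)\leq r\}
\]
is bounded by $\diam(X)$ and satisfies $\Lambda(r)\to 0$ as $r\to 0$. The LLC conclusion is equivalent to $\Lambda(r)\leq Cr$ for a uniform $C$, which is trivial when $d(y,z)$ is comparable to $\diam(X)$.

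To handle small pairs, I would fix $y,z\in X$ with $s:=d(y,z)$ small and apply quasi-self-similarity to a ball $B(y,R)$, producing an $\eta$-quasisymmetry $f:B(y,R)\to U$ with $\diam(U)\geq \delta\,\diam(X)$. Invoking Lemma~\ref{lem:ballinclusion} for $f$ and, symmetrically via Lemma~\ref{lem:quasisyminv}, for $f^{-1}$, one checks that if $R=C_0 s$ with $C_0=C_0(\eta,\delta)$, then $d(f(y),f(z))$ is pinned, both above and below, at a fixed fraction of $\diam(X)$ — so the image pair sits at a macroscopic scale. By the Peano property there is an arc in $X$ joining $f(y)$ and $f(z)$ of diameter at most $\diam(X)$; if such an arc can be found inside $U$, pulling it back by $f^{-1}$ produces an arc from $y$ to $z$ contained in $B(y,R)$, hence of diameter at most $2R=2C_0 s$, as desired.

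The crux is therefore guaranteeing the existence of an arc joining $f(y)$ and $f(z)$ that lies inside $U$. Because $U$ is open in the Peano continuum $X$, it is itself locally arcwise connected, so each of its connected components is arcwise connected. It thus suffices that $f(y)$ and $f(z)$ belong to the same component of $U$, equivalently (since $f$ is a homeomorphism) that $y$ and $z$ belong to the same component of $B(y,R)$. The finiteness of $\Lambda(s)$ produces an arc from $y$ to $z$ of diameter slightly above $\Lambda(s)$, which lies in $B(y,R)$ as soon as $R\geq s+\Lambda(s)$; the connectivity requirement therefore forces $R\gtrsim \Lambda(s)$.

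The main obstacle is the tension between the two requirements on $R$: the distortion estimate demands $R\asymp s$, while the connectivity argument demands $R\geq s+\Lambda(s)$. Without an a~priori linear bound on $\Lambda(s)$ these are not automatically compatible. I would resolve this by a bootstrapping argument over dyadic scales: starting from the trivial linear bound at the largest scale, I iteratively apply quasi-self-similarity to transport the bound from a known scale down to a smaller one, exploiting the fact that the quasisymmetric distortion function $\eta$ and the overlap constant $\delta$ are scale-invariant. The uniformity of these constants across all scales — itself a direct consequence of quasi-self-similarity — is precisely what prevents $\Lambda(s)/s$ from blowing up along the induction, and delivers a single $C$ working at every scale. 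Compactness of $X$ is used to ensure the induction closes after finitely many stages modulo the base case.
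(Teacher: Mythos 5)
The paper itself offers no proof of this lemma; it simply cites Carrasco--Piaggio, so I am comparing your sketch against what a complete argument would need. Your framework is the right one: Hahn--Mazurkiewicz makes $X$ a Peano continuum, the modulus $\Lambda$ is finite with $\Lambda(r)\to 0$, and quasi-self-similarity together with Lemmas~\ref{lem:quasisyminv} and~\ref{lem:ballinclusion} should upgrade this to $\Lambda(r)\le Cr$. You also correctly isolate the crux: the arc joining $f(y)$ to $f(z)$ must lie \emph{inside} $U$. But the way you propose to get past this does not close. The reduction to ``$y$ and $z$ lie in the same component of $B(y,R)$ with $R\asymp d(y,z)$'' is a restatement of the LLC conclusion in a Peano continuum (that condition already yields an arc of diameter $\lesssim R$ with no quasisymmetry at all), which you acknowledge by deriving $R\gtrsim\Lambda(s)$. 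The proposed dyadic bootstrap is then the whole content, and it is left unsubstantiated: if one only knows $\Lambda(s')\le Ks'$ for $s'\ge t$ and tries to handle $s\approx t/2$ by taking $R\approx Ks$, applying the quasisymmetry and pulling an arc back from $U$, the pullback lives in $B(y,R)$ and so has diameter $\le 2R\approx 2Ks$ --- the constant grows by a fixed factor at every dyadic step, and the induction diverges. The assertion that ``uniformity of the constants across scales prevents $\Lambda(s)/s$ from blowing up'' is exactly the statement that needs a mechanism, and none is offered; compactness does not terminate a scale-descending induction, since there are infinitely many dyadic scales.

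What is missing is a quantitative lower bound on how far $f(y)$ sits from $X\setminus U$, and with it the bootstrap becomes unnecessary. One has $B(f(y),c_0)\subset U$ with $c_0=\delta\diam(X)/(4\eta(1))$, depending only on the quasi-self-similarity data: take $q\in\partial U$ realizing $\dist(f(y),X\setminus U)$, write $q=\lim u_k$ with $u_k\in U$, observe that necessarily $d(y,f^{-1}(u_k))\to R$ (any subsequential limit $v$ with $d(y,v)<R$ would give $f(v)=q\in U$), pick $a\in U$ with $d(f(y),a)\ge\delta\diam(X)/4$, and apply the $\eta$-quasisymmetry to the triple $(y,f^{-1}(u_k),f^{-1}(a))$. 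Once this is in hand, set $\epsilon_0=c_0/2$, use uniform local arcwise connectedness of the compact Peano continuum to find $\rho_0>0$ so that points at distance $<\rho_0$ are joined by an arc of diameter $<\epsilon_0$, and choose $M$ with $\eta(2/M)\diam(X)<\rho_0$. For $s=d(y,z)$ small, applying quasi-self-similarity to $B(y,Ms)$ forces $d(f(y),f(z))<\rho_0$ by the distortion estimate, so there is an arc $\sigma$ of diameter $<\epsilon_0<c_0$ from $f(y)$ to $f(z)$; since $f(y)\in\sigma$ this arc lies in $B(f(y),c_0)\subset U$, and its preimage is an arc from $y$ to $z$ of diameter $\le 2Ms$. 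No component-of-$B(y,R)$ analysis and no induction on scales is required, which is precisely what removes the circularity you ran into.
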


\subsection{$\kappa$-approximations}\label{subsec:kappaapprox}

We introduce some terminology on approximations. Throughout this paper, $\cU$ and $\cV$ will denote finite collections of open sets.

\begin{definition}\label{def:kround} Let $\kappa \geq 1$. A finite collection of open sets $\cU$ of a metric space $X$ is called a $\kappa$-round collection, if for every $U\in \cU$ there exists a $z_U$ so that
\[
B(z_U,\kappa^{-1}r_U)\subset U \subset B(z_U, r_U),
\]
where $r_U=\sup\{d(z_U,x): x\in U\}$. If further there is some $r>0$, so that $r_U=r$ for every $U\in \cU$, we call $\cU$ a $\kappa$-round collection at level $r$.
\end{definition}

From here on out, if $U$ is any open set and $z_U\in U$ has been fixed, we define $r_U:=\sup\{d(z_U,x): x\in U\}$.

\begin{definition}\label{def:kapprox} Let $\kappa \geq 1$. A $\kappa$-round collection of open sets $\cU$ of a metric space $X$ is called a $\kappa$-locally bounded collection, if there exist $z_U\in U$ for every $U\in \cU$ for which Definition \ref{def:kround} holds and for which moreover the following two properties hold.
\begin{enumerate}
\item The balls $\{B(z_U,\kappa^{-1}r_U), U \in \cU\}$  are pairwise disjoint.
\item For every $L\geq 1$, there exists a constant $\kappa_L$ so that if $B(z_U,L r_U)\cap B(z_V,L r_V) \neq\emptyset$, then $r_U\leq \kappa_L r_V$.
\end{enumerate} 

If $\cU$ also covers $X$, then we call it a $\kappa$-approximation.
If further there is some $r>0$, so that $r_U=r$ for every $U\in \cU$, we call $\cU$ a $\kappa$-approximation at level $r$.
\end{definition}

Let $\rad(\cU)=\sup\{r_U : U \in \cU\}$. A standard way to obtain a $\kappa$-approximation is the following. A set $N\subset X$ is called $r$-separated if for all $x,y\in X$ we have $d(x,y)\geq r$. A maximal $r$-separated set is called an $r$-net. Given any $r$-net $N$ in a connected space $X$, with $r\in (0,\diam(X)/2)$, it is straightforward to show that the collection $\cU=\{B(x,2r): x\in N\}$ is a $\kappa$-approximation at level $r$ with $r_U=2r$ and $z_U=x$ for every $U=B(x,2r)\in \cU$, and $\kappa=1,\kappa_L=4$ for all $L\geq 1$. 

We note that we have made some adjustments in the notation and terminology to bridge small differences in the literature, and in order to connect more directly to our work. The following remark explains some of these choices and how the other definitions/concepts can be expressed in our framework.

\begin{remark}\label{rmk:defapprox} We briefly explain the relationships between different definitions used in \cite{keithlaakso,carrasco,murugan2022conformal,shanmunconf} and \cite{bourdonkleiner}. In the first four of these, one takes $\alpha\geq 2$ and considers a sequence $N_k$ of $\alpha^{-k}$ nets and a parameter $\lambda> 1$, and defines graphs $G_n$ whose vertex set is $N_k$, and with edges $v,w$ if $B(v,\lambda 2^{-k})\cap B(w,\lambda 2^{-k})\neq \emptyset$. In our case, this would correspond to the $\kappa$-approximation given by $\cU=\{B(v,\lambda 2^{-k})\}$, and setting $\kappa = 2\lambda$. Doing so, the incidence graph associated to $\cU$ is isomorphic to that of $G_n$. This isomorphism is relevant in Section \ref{sec:discretemoduli}, since we will define discrete moduli using incidences of sets in $\cU$, while in  \cite{keithlaakso,carrasco,murugan2022conformal,shanmunconf} the moduli are defined in the graphs $G_n$. Since these two graphs are isomorphic, the relevant notions of moduli coincide.

On the other hand, compared to \cite{bourdonkleiner} we use a slightly  more general framework of arbitrary $\kappa$-approximations in order to ensure the quasisymmetry invariance of our definitions. In their work, one only uses $\kappa$-approximations at a given level $r$.
\end{remark}

Let $\cV$ be a $\kappa$-round collection in $X$, and let $f:X\to Y$ be a quasisymmetry. Then define the image collection $f(\cV):=\{f(V):V\in \cV\}$. We then then have the following.

\begin{lemma}\label{lem:kapparoundinvariance} Let $\cV$ be a $\kappa$-round collection in a space $X$ and if $f:X\to Y$ is an $\eta$-quasisymmetric map, then $f(\cV)$ is a $\kappa'$-round collection with $\kappa'=2\eta(\kappa)$.

Moreover, if $\cV$ is a $\kappa$-approximation, then $f(\cV)$ is a $\kappa'$-approximation with $\kappa'=2\eta(\kappa)$.
\end{lemma}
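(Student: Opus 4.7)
The plan is to set $z'_{U'}:=f(z_U)$ and $r'_{U'}:=\sup_{u\in U} d(f(z_U),f(u))$ for each $U\in \cV$; the outer inclusion $f(U)\subset B(z'_{U'},r'_{U'})$ is then immediate from the definition. The entire argument reduces to establishing the single stronger inner inclusion
\begin{equation*}
B\bigl(f(z_U),\, r'_{U'}/\eta(\kappa)\bigr) \subset f\bigl(B(z_U,\kappa^{-1}r_U)\bigr),
\end{equation*}
which simultaneously implies the $\kappa'$-round property (with slack, since $\eta(\kappa) \leq 2\eta(\kappa)$) and the disjointness of inner balls needed for the approximation case.

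To prove the stronger inclusion, I would take any $x\in X$ with $d(z_U,x)\ge \kappa^{-1}r_U$. For every $u\in U$ we have $d(z_U,u)/d(z_U,x)\le r_U/(\kappa^{-1}r_U)=\kappa$, so the $\eta$-quasisymmetry applied to the triple $(z_U,u,x)$ gives $d(f(z_U),f(u))\le \eta(\kappa)\,d(f(z_U),f(x))$. Taking the supremum over $u\in U$ yields $r'_{U'}\le \eta(\kappa)\,d(f(z_U),f(x))$, and hence $f(x)\notin B(f(z_U),r'_{U'}/\eta(\kappa))$. Because $f$ is a bijection, every $y$ in this image ball has its preimage in $B(z_U,\kappa^{-1}r_U)$, proving the inclusion. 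The $\kappa'$-round property with $\kappa'=2\eta(\kappa)$ follows at once, and property (1) of the $\kappa'$-approximation follows because the inner balls in $f(\cV)$ lie inside the pairwise disjoint sets $f(B(z_U,\kappa^{-1}r_U))$ and $f$ is injective.

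The main obstacle will be property (2). Given $L\ge 1$ with $B(f(z_U),Lr'_{U'})\cap B(f(z_V),Lr'_{V'})\neq \emptyset$, I must produce a constant $\kappa'_L$ depending only on $L$, $\eta$, $\kappa$, and the original $\{\kappa_{L'}\}$ so that $r'_{U'}\le \kappa'_L r'_{V'}$. WLOG $r'_{U'}\ge r'_{V'}$, so $d(f(z_U),f(z_V))< 2Lr'_{U'}$. I would pick $u\in U$ with $d(f(z_U),f(u))\ge r'_{U'}/2$ (available from the defining supremum) and apply Lemma \ref{lem:quasisyminv} to the triple $(f(z_U),f(z_V),f(u))$ to obtain $d(z_U,z_V)\le \tilde{\eta}(4L)\,r_U$. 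Setting $L':=\max\{1,\tilde{\eta}(4L)\}$, the point $z_V$ lies in both $B(z_U,L'r_U)$ and $B(z_V,L'r_V)$, so property (2) of $\cV$ forces $r_U$ and $r_V$ to be comparable up to $\kappa_{L'}$. Consequently both $z_U$ and $u$ lie in some ball $B(z_V,Cr_V)$ with $C=(\tilde{\eta}(4L)+1)\kappa_{L'}$. Applying Lemma \ref{lem:ballinclusion} to $B(z_V,r_V)$, which contains a point $y\in V$ with $d(z_V,y)\ge r_V/2$ by a limit argument on the supremum defining $r_V$, transports the inclusion into $Y$ and yields $d(f(z_V),f(z_U)),\,d(f(z_V),f(u))\le \eta(2C)\,r'_{V'}$. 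The triangle inequality and $r'_{U'}\le 2d(f(z_U),f(u))$ then give $r'_{U'}\le 4\eta(2C)\,r'_{V'}$, so $\kappa'_L:=4\eta(2C)$ works.

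Finally, the covering property is immediate: $\bigcup f(\cV)=f(\bigcup\cV)=f(X)=Y$ by bijectivity. The most delicate ingredient throughout is ensuring that every ball whose image we wish to estimate contains a point at distance at least half its radius from the center; this is supplied exactly by the $\kappa$-roundness condition $B(z_U,\kappa^{-1}r_U)\subset U$.
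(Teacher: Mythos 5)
Your proof is correct, and its overall structure -- set $z'_{f(U)}=f(z_U)$, $r'_{f(U)}=\sup_{u\in U}d(f(z_U),f(u))$, verify roundness, then establish local boundedness by pulling back to $X$, invoking $\cV$'s local boundedness, and pushing forward -- is the same as the paper's. That said, two of your steps are genuinely cleaner. For roundness, you argue by contrapositive: from $d(z_U,x)\ge\kappa^{-1}r_U$ you deduce $d(f(z_U),f(x))\ge r'_{f(U)}/\eta(\kappa)$ directly, so $B\bigl(f(z_U),\eta(\kappa)^{-1}r'_{f(U)}\bigr)\subset f\bigl(B(z_U,\kappa^{-1}r_U)\bigr)$; this gives the sharper constant $\eta(\kappa)$ rather than $2\eta(\kappa)$ (the paper's argument, choosing a reference point $w\in f(V)$ with $d(w,z_{f(V)})\ge r_{f(V)}/2$ and comparing ratios, costs the extra factor of $2$). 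For local boundedness, the paper bounds $d(z_U,z_V)$ from below by $\kappa^{-1}r_U$ (from disjointness of inner balls) and then applies the quasisymmetry to the triple $z_U,z_V,b_U$; you instead show $z_U,u\in B(z_V,Cr_V)$ for a controlled $C$ and conclude with one application of Lemma~\ref{lem:ballinclusion} followed by the triangle inequality, sidestepping the separation estimate entirely and yielding a $\kappa'_L$ that does not involve $\kappa$. Minor bookkeeping worth noting: you should tolerate an $\epsilon$ of slack when arguing with open balls (e.g.\ enlarge $L'$ and $C$ marginally so memberships are strict), and take $\kappa'_L=\max\{1,4\eta(2C)\}$ to cover the $r'_{f(U)}<r'_{f(V)}$ case of the WLOG; neither affects correctness.
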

\begin{proof}
For every $V\in \cV$ let $z_V\in V, r_V>0$ be the center and radius specified in Definition \ref{def:kround}. Define $z_{f(V)}=f(z_V)$ and $r_{f(V)}=\sup\{d(y,f(z_V)) : y\in f(V)\}$. 

Suppose first that $\cV$ is $\kappa$-round and let $\kappa'=2\eta(\kappa)$. We will show that $V$ is $\kappa'$-round, that is, we prove
\begin{equation}\label{eq:inclusions}
B(z_{f(V)},\kappa'^{-1}r_{f(V)})\subset f(V) \subset B(z_{f(V)},r_{f(V)}).
\end{equation} 
The second of these inclusions follows from the definition of $r_{f(V)}$. Now, let $y\in B(z_{f(V)},\kappa'^{-1}r_{f(V)})$, and let $b\in X$ be such that $f(b)=y$. Choose a point $w\in f(V)$ so that $d(w,z_{f(V)})\geq 2^{-1}r_{f(V)}$ and let $c\in V$ be such that $f(c)=w$. Since $f$ is a quasisymmetry, we get
\[
\kappa'/2=\frac{2^{-1}r_{f(V)}}{\kappa'^{-1}r_{f(V)}}\leq \frac{d(z_{f(V)},w)}{d(z_{f(V)},y)}\leq \eta\left(\frac{d(z_V,c)}{d(z_V,b)}\right).
\]
Thus,
\[
d(z_V,b)\leq d(z_V,c) \eta^{-1}(\kappa'/2)^{-1}\leq r_V \kappa^{-1}.
\]
Therefore $b\in B(z_V,r_V\kappa^{-1})\subset V$ and $y\in f(V)$. This yields the first of the inclusions in \eqref{eq:inclusions}. Thus, $f(\cV)$ is $\kappa'$-round.

Let us know assume further that $\cV$ is a $\kappa$-approximation. Indeed, it is $\kappa$-locally bounded and covers $X$. Clearly $f(\cV)$ covers $Y$. Thus, it suffices to prove that $f(\cV)$ is $\kappa'$-locally bounded.

The proof above showed in fact that
\begin{equation}\label{eq:inclusions2}
B(z_{f(V)},\kappa'^{-1}r_{f(V)})\subset f(B(z_{V},\kappa^{-1}r_{f(V)})).
\end{equation}
Thus, the balls $\{B(z_{f(V)},\kappa'^{-1}r_{f(V)}) : V\in \cV\}$ are pairwise disjoint. Therefore, we are left to show that for every $L\geq 1$ there exists a $\kappa_L'$ so that if
\[
B(z_{f(V)}, L r_{f(V)}) \cap B(z_{f(U)}, L r_{f(U)})\neq\emptyset
\]
for some $U,V\in \cV$, then $r_{f(U)}\leq \kappa_L' r_{f(V)}$. This is obtained by first finding an $L'\geq 1$ so that $B(z_{V}, L' r_{V}) \cap B(z_{U}, L r_{U})\neq \emptyset$, which yields an estimate for $d(z_U,z_V)$ in terms of $r_V$, and then using the quasisymmetry to translate this into a bound for $r_{f(U)}$ in terms of $r_{f(V)}$.

Let $w\in B(z_{f(V)}, L r_{f(V)}) \cap B(z_{f(U)}, L r_{f(U)})$ and let $u\in B(z_{f(U)}, r_{f(U)}), v\in B(z_{f(V)}, r_{f(V)})$ be points with $d(u,z_{f(U)})\geq r_{f(U)}/2$ and $d(v,z_{f(V)})\geq r_{f(V)}/2$. Let $a\in X,b_U\in U,b_V\in V$  
points 
so that $f(a)=w, f(b_U)=u, f(b_V)=v$.

By Lemma \ref{lem:quasisyminv}, the map $f^{-1}$ is a $\tilde{\eta}$-quasisymmetry, with $\tilde{\eta}(t)=\left(\eta^{-1}(t^{-1})\right)^{-1}$.  By the quasisymmetry condition applied to the three points $b_U,a,z_U$, we have 
\[
d(w,z_U) \leq \tilde{\eta}\left(\frac{d(w,z_{f(U)})}{d(u,z_{f(U)})}\right)d(z_U,b_U)\leq \tilde{\eta}(2L) r_U.
\]
Thus, $a\in B(z_U,\tilde{\eta}(2L) r_U)$. Similarly, we get 
$a\in B(z_V,\tilde{\eta}(2L) r_V).$
Consequently 
\begin{equation}\label{eq:ainclusion}
a\in B(z_U,\tilde{\eta}(2L)r_U) \cap B(z_V,\tilde{\eta}(2L)r_V).
\end{equation}
Therefore, since $\cU$ is locally bounded, there exists a constant $\kappa_{\tilde{\eta}(2L)}$ for which
\begin{equation}\label{eq:rubound}
\tilde{\eta}(2L)^{-1}r_V \leq r_U \leq \kappa_{\tilde{\eta}(2L)}r_V.
\end{equation}

From \eqref{eq:ainclusion} and \eqref{eq:rubound} we get 
\[d(z_U,z_V) \leq d(z_U,a)+d(z_V,a)\leq \tilde{\eta}(2L)(1+\kappa_{\tilde{\eta}(2L)})r_V.
\] 
We have $z_U \in B(z_V,\tilde{\eta}(2L)(1+\kappa_{\tilde{\eta}(2L)})r_V)$. Again, by Lemma \ref{lem:ballinclusion}, we get that
\[
z_{f(U)}=f(z_U)\in B(z_{f(V)}, \eta(2\tilde{\eta}(2L)(1+\kappa_{\tilde{\eta}(2L)}))r_{f(V)}).
\]
In particular, 
\begin{equation}\label{eq:zuzvbound}
d(z_{f(U)}, z_{f(V)})\leq \eta(2 \tilde{\eta}(2L)(1+\kappa_{\tilde{\eta}(2L)}))r_{f(V)}.
\end{equation}
We also have $z_{V}\not\in B(z_{U}, \kappa^{-1}r_{U})$, and thus
\begin{equation}\label{eq:zuzvlowerbound}
d(z_{U}, z_{V})\geq \kappa^{-1}r_{U}.
\end{equation}

Finally, apply the $\eta$-quasisymmetry to the points $z_U,z_V$ and $b_U$ and use  \eqref{eq:zuzvlowerbound} to give
\begin{equation}\label{eq:qsrfUbound}
\frac{r_{f(U)}}{2 d(z_{f(V)}, z_{f(U)})} \leq \frac{d(z_{f(U)}, u)}{d(z_{f(V)}, z_{f(U)})} \leq \eta\left(\frac{d(z_{U}, b_{U})}{d(z_{U}, z_V)}\right) \leq \eta\left(\frac{\kappa r_U}{r_U}\right)\leq \eta(\kappa).
\end{equation}
Thus, by applying \eqref{eq:zuzvbound} we get
\[
r_{f(U)} \leq 2 \eta(\kappa) \eta(2 \tilde{\eta}(2L)(1+\kappa_{\tilde{\eta}(2L)}))r_{f(V)}.
\]
This is the desired estimate with $\kappa_L'=2 \eta(\kappa) \eta(2 \tilde{\eta}(2L)(1+\kappa_{\tilde{\eta}(2L)}))$ and yields the local boundedness.



\end{proof}

\section{Discrete moduli}\label{sec:discretemoduli}

\subsection{Discrete modulus of a collection} \label{subsec:discretemoduli}

We will define all the relevant discrete moduli in this section. First, we define a discrete modulus of a collection of discrete subsets. Let $\cU$ be a $\kappa$ round collection and let $\cP$ be a collection of subsets of $\cU$. (Indeed, in general $\cU$ could be any finite collection of objects, but in our application, we will restrict to such collections.) We say that $\rho:\cU\to [0,\infty)$ is discretely admissible for $\cP$, and write $\rho \wedge_{\cU} \cP$, if
\[
\sum_{U\in \cP} \rho(U) \geq 1, \text{ for all } P\in \cP.
\]

Define the discrete modulus by
\[
\Mod_p^D(\cP,\cU):=\inf_{\rho \wedge_{\cU} \cP} \sum_{U\in \cU} \rho(U)^p.
\]
The sum on the right will often also be called the \emph{$p$-energy} of $\rho$.

We recall some basic properties of modulus, whose proofs are standard. For similar arguments, see e.g. \cite[Section 1]{fuglede}. The existence of minimizers follows directly from the fact that $\cU$ must be finite, and the optimization is done in a finite dimensional space. 

\begin{lemma}\label{lem:modulus} Let $\cU$ be a $\kappa$-round collection of $X$ and let $p\geq 1$.
\begin{enumerate}
\item Monotonicity: If $\cP\subset \cP'$ are two collections of sets, then 
\[
\Mod_p^D(\cP,\cU) \leq \Mod_p(\cP',\cU).
\]
\item Sub-additivity: If $\cP,\cP'$ are two collections of subsets, then 
\[
\Mod_p^D(\cP\cup \cP',\cU) \leq \Mod^D_p(\cP',\cU)+\Mod^D_p(\cP,\cU).
\]
\item Majorization: If $\cP,\cP'$ are two collections of subsets so that every set $P\in \cP$ contains a subset in $\cP'$, then 
\[
\Mod_p^D(\cP,\cU) \leq \Mod^D_p(\cP',\cU).
\]
\item Existence of minimizers: If $X$ is compact, then there exists a $\rho \wedge_{\cU} \cP$ with 
\[\Mod^D_p(\cP,\cU)= \sum_{U\in \cU} \rho(U)^p.\]
\end{enumerate}
\end{lemma}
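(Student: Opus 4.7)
The plan is to prove each of the four items by a standard modulus argument, exploiting that $\cU$ is a finite collection and that the admissibility condition $\rho\wedge_{\cU}\cP$ consists of finitely many linear inequalities on $\rho:\cU\to[0,\infty)$. I would dispatch them in the order (1), (3), (2), (4).

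For monotonicity, I would simply observe that when $\cP\subset\cP'$, every $\rho$ admissible for $\cP'$ satisfies $\sum_{U\in P}\rho(U)\geq 1$ for each $P\in\cP\subset\cP'$, so it is also admissible for $\cP$. Thus the admissible set for $\cP$ contains the one for $\cP'$, and the infimum defining the modulus can only decrease. Majorization is essentially the same idea: if $P\in\cP$ contains some $P'\in\cP'$ with $P'\subset P$ and if $\rho$ is admissible for $\cP'$, then $\sum_{U\in P}\rho(U)\geq \sum_{U\in P'}\rho(U)\geq 1$, so $\rho$ is admissible for $\cP$ too, and comparing infima over energies yields the stated inequality.

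For sub-additivity, I would use the classical $\ell^p$-combination trick. Fix $\varepsilon>0$ and choose $\rho_1\wedge_{\cU}\cP$, $\rho_2\wedge_{\cU}\cP'$ with $p$-energies within $\varepsilon$ of $\Mod_p^D(\cP,\cU)$ and $\Mod_p^D(\cP',\cU)$ respectively. Set
\[
\rho(U):=\bigl(\rho_1(U)^p+\rho_2(U)^p\bigr)^{1/p}.
\]
Since $\rho\geq\rho_1$ and $\rho\geq\rho_2$ pointwise, $\rho$ is admissible for both $\cP$ and $\cP'$, hence for $\cP\cup\cP'$. Summing,
\[
\sum_{U\in\cU}\rho(U)^p=\sum_{U\in\cU}\rho_1(U)^p+\sum_{U\in\cU}\rho_2(U)^p\leq \Mod_p^D(\cP,\cU)+\Mod_p^D(\cP',\cU)+2\varepsilon,
\]
and letting $\varepsilon\to 0$ gives the claim.

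For existence of minimizers, I would use finite dimensionality: because $\cU$ is finite, admissible functions live in the closed convex polyhedron $\{\rho\in[0,\infty)^{|\cU|}:\sum_{U\in P}\rho(U)\geq 1\text{ for all }P\in\cP\}$, and the $p$-energy $\rho\mapsto\sum_{U\in\cU}\rho(U)^p$ is continuous and coercive on $\R^{|\cU|}$. Take a minimizing sequence $\rho_n$; its $p$-energies are bounded, so the sequence is bounded in $\R^{|\cU|}$, and passing to a convergent subsequence together with closedness of the admissible set and continuity of the energy produces a minimizer. There is no real obstacle in any of the four parts beyond the standard sub-additivity trick; compactness of $X$ enters only to the extent that it is a standing assumption ensuring that the $\kappa$-round collections $\cU$ we work with can be taken finite, as built into Definition \ref{def:kround}.
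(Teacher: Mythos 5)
Your proof is correct, and it takes the same route the paper itself gestures at: the paper does not spell out arguments for this lemma at all (it declares the proofs "standard," cites Fuglede, and remarks only that minimizers exist because $\cU$ is finite so the optimization happens in a finite-dimensional space), and your four arguments are precisely those standard ones. In particular, your observation about the role of compactness — that it enters only through finiteness of $\cU$, which makes the energy a coercive continuous function on a closed set in $\R^{|\cU|}$ — is exactly the point the paper makes.
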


In what follows, since these properties are so standard, we will often simply apply these facts without explicit reference to this Lemma.

\subsection{Modulus of annulus}\label{subsec:moduli}

Let $B$ be a ball in $X$ and $L>1$. Consider a $\kappa$-round collection $\cU$. We say that $P=\{U_1,\dots, U_n\}\subset \cU$ is a $(\cU,B,L)$-path, if $U_1\cap B\neq \emptyset$, $U_n \cap X\setminus LB\neq \emptyset$ and if $U_i\cap U_{i+1} \neq \emptyset$ for all $i=1,\dots, n-1$. Let $\cP_{\cU,B,L}$ be the collection of all $(\cU,B,L)$-paths.

Then, we define the Keith-Laakso modulus as
\[
\Mod_{p,L,\cU}^{KL}(B):=\Mod_p^D(\cP_{\cU,B,L},\cU) .
\]
This notion of modulus coincides with that of \cite{keithlaakso,carrasco,murugan2022conformal} if we use the collection $\cU$ indicated in Remark \ref{rmk:defapprox}.  

In \cite{bourdonkleiner}, a slightly different form of modulus is obtained by using a more restrictive admissibility constraint. Note that this modulus is defined for collections of curves, while the previous one is only defined for balls (and corresponds to a family of objects which traverse an annulus). Let $\Gamma$ be a family of curves, and let $\cP_\Gamma = \{P_\gamma : \gamma \in \Gamma\}$, where $P_\gamma =\{U\in \cU : U\cap \gamma \neq \emptyset\}$. We define the (Bourdon-Kleiner) modulus of the curve family as 
\[
\Mod_{p,\cU}(\Gamma)=\Mod_p^D(\cP_\Gamma, \cU).
\]

We relate the two moduli via the following lemma. This justifies using the Bourdon-Kleiner modulus, instead of the Keith-Laakso modulus in the context of conformal dimension, see Theorem \ref{thm:confhausdim}. Let $\Gamma_{L,B}$ be the collection of curves $\gamma$ connecting $B$ to $X\setminus LB$.

\begin{lemma}\label{lem:moduli} Suppose that $X$ is compact, metrically doubling and LLC. Then, for any $\kappa$-approximation $\cU$, any ball $B\subset X$ and any $L>1$, we have
\[
\Mod_{p,\cU}(\Gamma_{B,L})\sim \Mod_{p,L,\cU}^{KL}(B).
\]
\end{lemma}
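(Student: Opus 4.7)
The plan is to establish both inequalities $\Mod_{p,\cU}(\Gamma_{B,L}) \leq \Mod_{p,L,\cU}^{KL}(B)$ and $\Mod_{p,L,\cU}^{KL}(B) \leq C\,\Mod_{p,\cU}(\Gamma_{B,L})$, where $C$ depends only on $p$, the doubling constant of $X$, the LLC constant, and the $\kappa$-approximation parameters. The easier direction is the first, and in fact it does not use any of the hypotheses on $X$: given any $\rho$ that is KL-admissible for $\cP_{\cU,B,L}$ and any curve $\gamma \in \Gamma_{B,L}$, set $\cU_\gamma := \{U \in \cU : U \cap \gamma \neq \emptyset\}$ and form the graph on $\cU_\gamma$ whose edges are intersecting pairs. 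Since $\gamma$ is connected and $\{U \cap \gamma : U \in \cU_\gamma\}$ is an open cover of $\gamma$, a standard partition-of-$\gamma$-into-components argument shows this graph is connected. The starting point of $\gamma$ lies in some $U^B \in \cU_\gamma$ with $U^B \cap B \neq \emptyset$, and similarly the endpoint lies in some $U^L$ meeting $X \setminus LB$; a graph path from $U^B$ to $U^L$ gives a $(\cU,B,L)$-path $P \subseteq \cU_\gamma$, so KL-admissibility forces $\sum_{U \in \cU_\gamma} \rho(U) \geq \sum_{U \in P} \rho(U) \geq 1$.

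For the reverse direction, given $\rho$ admissible for $\Gamma_{B,L}$ I will define the thickening $\tilde\rho(U) := \sum_{V \in \cU : V \cap C'U \neq \emptyset} \rho(V)$, where $C'U$ denotes $B(z_U, C'r_U)$ and $C' = 1 + 2C_{\mathrm{LLC}}$. To show $\tilde\rho$ is KL-admissible, take any $(\cU,B,L)$-path $P = \{U_1,\dots,U_n\}$, choose boundary points $x_0 \in U_1 \cap B$, $x_n \in U_n \cap (X \setminus LB)$ and transition points $x_i \in U_i \cap U_{i+1}$ for $1 \leq i \leq n-1$. By LLC, consecutive points $x_{i-1},x_i$ (both in $U_i$, so at distance at most $2r_{U_i}$) can be joined by a curve $\gamma_i$ of diameter at most $2C_{\mathrm{LLC}}r_{U_i}$, which then satisfies $\gamma_i \subset C'U_i$. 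The concatenation $\gamma = \gamma_1\cdots\gamma_n$ lies in $\Gamma_{B,L}$ and is contained in $\bigcup_i C'U_i$. Hence any $V$ meeting $\gamma$ meets some $C'U_i$ and contributes to $\tilde\rho(U_i)$, so admissibility of $\rho$ gives $\sum_i \tilde\rho(U_i) \geq \sum_{V : V \cap \gamma \neq \emptyset} \rho(V) \geq 1$.

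To bound the $p$-energy of $\tilde\rho$, apply the inequality $(a_1+\cdots+a_N)^p \leq N^{p-1}\sum_k a_k^p$ and swap the order of summation to get
\[
\sum_U \tilde\rho(U)^p \leq N^{p-1} \sum_V \rho(V)^p \cdot \#\{U : V \cap C'U \neq \emptyset\},
\]
where $N$ is a uniform bound on $\#\{V : V \cap C'U \neq \emptyset\}$. Both multiplicities are bounded because the local boundedness condition (property (2) of Definition \ref{def:kapprox}) forces $r_U$ and $r_V$ to be comparable whenever $V \cap C'U \neq \emptyset$: if $x \in V \cap C'U$ then $d(z_U,z_V) \leq r_V + C'r_U$, so $B(z_U, C'r_U)$ and $B(z_V,C'r_V)$ overlap, and local boundedness pins down their ratio. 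Then the disjoint inner balls $\{B(z_V,\kappa^{-1}r_V)\}$ all sit in a fixed enlargement of $B(z_U,r_U)$ with radii comparable to $r_U$, and metric doubling of $X$ bounds their number by a constant depending only on the doubling constant, $\kappa$, $C'$, and the local-boundedness function. Taking the infimum over $\rho$ then gives $\Mod_{p,L,\cU}^{KL}(B) \lesssim \Mod_{p,\cU}(\Gamma_{B,L})$.

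The principal obstacle is ensuring the thickening constants are uniform: the multiplicity bounds must depend only on $\kappa$, the $\kappa_L$'s, $C_{\mathrm{LLC}}$, and the doubling constant, and \emph{not} on the particular $\kappa$-approximation $\cU$ nor on the ball $B$. The local boundedness axiom in Definition \ref{def:kapprox} is precisely engineered to supply this uniform control across possibly uneven $\kappa$-approximations, and the LLC hypothesis is what permits turning a combinatorial chain into an honest curve in the harder direction -- without either, the reverse inequality fails in general.
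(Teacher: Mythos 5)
Your proof is correct and takes essentially the same approach as the paper: the easy direction by observing that every $P_\gamma$ contains a $(\cU,B,L)$-path (the paper invokes this via the majorization property of Lemma~\ref{lem:modulus}, while you spell out the connected-nerve argument), and the hard direction by an LLC-thickening of $\rho$ whose admissibility for $\cP_{\cU,B,L}$ follows from turning a combinatorial chain into a curve, with multiplicities controlled by local boundedness and doubling. The only cosmetic difference is that you use a sum-thickening $\tilde\rho(U)=\sum_{V\cap C'U\neq\emptyset}\rho(V)$ together with the power-mean inequality $(a_1+\cdots+a_N)^p\le N^{p-1}\sum a_k^p$, whereas the paper uses the max-thickening $\tilde\rho(V)=D^k\max_{U\in\cU_{V,C}}\rho(U)$ and a counting argument; the two are equivalent up to constants.
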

\begin{proof}
We have $\cP_\Gamma \subset \cP_{\cU,B,L}$, where $\Gamma=\Gamma_{B,L}$. From the definition of the modulus, and Lemma \ref{lem:modulus} it is thus direct that
\[
\Mod_{p,\cU}(\Gamma_{B,L})=\Mod^D_p(\cP_\Gamma, \cU)\leq \Mod^D_p(\cP_{\cU,B,L},\cU))=\Mod_{p,L,\cU}^{KL}(B).
\]

For the other direction of the proof we need the assumptions of $X$ being LLC and metrically doubling. Let $X$ be $C$-LLC and $D$-metrically doubling. Next, let $\rho \wedge_{\cU} \cP_{\Gamma}$ be arbitrary. We will define another $ \tilde{\rho}$ so that $\tilde{\rho} \wedge_{\cU} \cP_{\cU,B,L}$ and so that 
\begin{equation}\label{eq:tilderhomassbnd}
\sum_{U\in \cU} \tilde{\rho}(U)^p \leq M \sum_{U\in \cU} \rho(U)^p
\end{equation}
for a constant $M$ depending on $C$, the local boundedness constants and $D$. From these the claim of the Lemma follows by taking an infimum over all $\rho \wedge_{\cU} \cP_{\Gamma}$. The rest of the proof consists of defining $\tilde{\rho}$, showing \eqref{eq:tilderhomassbnd} and proving $\tilde{\rho} \wedge_{\cU} \cP_{\cU,B,L}$.

We do a small preliminary estimate. Fix $V\in \cU$ and any constant $S\geq 1$ and let 
\[
\cU_{V,S} =\{U\in \cU : U \cap B(z_V, (1+2S)r_{V}) \neq \emptyset\}.
\]
 If $U_1,U_2\in \cU_{V,S}$ are distinct, we have by local boundedness 
 \begin{equation}\label{eq:radiusbound}
 \kappa_{1+2S}^{-1} r_{V}\leq r_{U_1},r_{U_2}\leq \kappa_{1+2S}r_{V}.
 \end{equation} 
Thus, the balls $B(z_{U_i},r_{V}\kappa_{1+2S}^{-1}\kappa^{-1})$ are disjoint for $i=1,2$ and are contained in $B(z_V, (1+2S+2\kappa_{1+2S}) r_V)$. Thus, by metric doubling, there are at most $D^m$ sets contained in $\cU_{V,S}$ for any $V\in \cU$ as long as $2^m\geq 4\kappa\kappa_{1+2S}(1+2S+2\kappa_{1+2S}).$

We will next consider $\cU_{V,C}$. Let $L=1+\kappa_{1+2C}(1+2C)$. Choose $k,l\in \N$ with $l\leq k$ and  so that $ 4\kappa\kappa_{1+2C}(1+2C+2\kappa_{1+2C})\leq 2^l$ and so that $ 4\kappa\kappa_{1+2L}(1+2L+2\kappa_{1+2L})\leq 2^k$, and let $M=D^{2kp}$. By the argument after \eqref{eq:radiusbound} with $S=C$, we have that $|\cU_{V,C}|\leq 2^l \leq 2^k$.
Let 
\[
\tilde{\rho}(V)=D^{k} \max\{\rho(U) : U\in \cU_{V,C}\}.
\]
For each $V\in \cU$ choose a $U_V \in \cU_{V,C}$ so that $\tilde{\rho}(V)=D^{k}\rho(U_V)^p$. Let $\tilde{\cU}_{U,C}=\{V\in \cU : U_V=U\}$. For every $V\in \tilde{\cU}_{U,C}$, we have $U_V=U$ and thus $U\in \cU_{V,C}$. Thus $B(z_U,r_U)\cap B(z_V,(1+2C)r_V)\neq  \emptyset$, and $V\subset B(z_V,r_V)\subset B(z_U,r_U+(1+2C)r_V)$. Consequently, from using \eqref{eq:radiusbound} we get $V\cap B(z_U,(1+\kappa_{1+2C}(1+2C))r_U)\neq \emptyset$. In particular, we have $\tilde{\cU}_{U,C}\subset \cU_{U,L}$. Thus, we have by the argument after \eqref{eq:radiusbound} with $S$ replaced with $L$ that $|\tilde{\cU}_{U,C}|\leq D^k$.

Let $P=\{U_1,\dots, U_n\}\in \cP_{\cU,B,L}$. Define a sequence of points $(x_i)_{i=1}^{n+1}$ as follows. Let $x_1\in U_1\cap B,x_{n+1}\in U_{n}\cap X\setminus LB$, and let $x_i\in U_{i}\cap U_{i-1}$ for $i=2,\dots n$. Since $X$ is $C$-LLC, we can find curves $\gamma_i$ connecting $x_i$ to $x_{i+1}$ with $\diam(\gamma_i)\leq Cd(x_i,x_{i+1}) \leq \diam(U_i)$ for $i=1,\dots, n$. Let $\gamma$ be the concatenation of $\gamma_i$. We have that $\gamma \in \Gamma_{B,L}$. 

Now, let $P_{\gamma}=\{U \in \cU: U\cap \gamma \neq \emptyset\}$. We have
\[
\sum_{U\in P_\gamma} \rho(U)\geq 1,
\]
since $\rho \wedge_{\cU} \cP_{\Gamma}$. Now, for each $U\in P_\gamma$ we have some $i=1,\dots, n$ so that $U\cap \gamma_i \neq \emptyset$. Therefore, we have $d(U,U_i)\leq C\diam(U_i)\leq 2Cr_{U_i}$. Thus, $U\in \cU_{U_i,C}$ for some $i$. Let $P_{\gamma,i}=\cU_{U_i,C}\cap P_\gamma$. Since $|\cU_{U_i,C}|\leq D^k$, we get
\[
\tilde{\rho}(U_i) \geq \sum_{U\in P_{\gamma,i}} \rho(U).
\]
Summing these, we get
\begin{align*}
1\leq \sum_{U\in P_\gamma} \rho(U) &\leq \sum_{i=1}^n \sum_{U\in P_{\gamma,i}} \rho(U)\\
&\leq \sum_{i=1}^n \tilde{\rho}(U_i).
\end{align*}
Thus, $\tilde{\rho} \wedge \cP_{\cU,B,L}$, since $P$ was arbitrary.

Finally, we show \eqref{eq:tilderhomassbnd} for $M=D^{k(1+p)}$.  We have by the size bound for $\tilde{U}_{U,C}$ that

\begin{align*}
\sum_{V\in \cU} \tilde{\rho}(V)^p &\leq \sum_{U\in \cU} \sum_{V\in \tilde{\cU}_{U,C}} \tilde{\rho}(V)^p \\
&\leq \sum_{U\in \cU} D^k D^{kp}\rho(U)^p \leq D^{k(1+p)} \sum_{U\in \cU} \rho(U)^p.
\end{align*}

\end{proof}



\subsection{Relationship to Conformal dimension}\label{sec:relationconf}

The proof of the main Theorem \ref{thm:mainthm} is based on the following Theorem of Carrasco-Piaggio. An interested reader may see also \cite{shanmunconf} and \cite{murugan2022conformal} for slightly different versions and proofs of this statement. We have used Lemma \ref{lem:moduli} and Remark \ref{rmk:defapprox} to reformulate the theorem using our notion of $\kappa$-approximations and moduli.

\begin{theorem}[Theorem~1.3 in \cite{carrasco}]\label{thm:confhausdim}Suppose that $X$ is a compact, metrically doubling LLC space, and let $\cU_k$ be $\kappa$-approximations at level $2^{-k}$. Then

\[
\dims_{CAR}(X)=\inf\left\{Q>0 : \liminf_{m\to \infty}\sup_{z\in X, k\geq 0} \Mod_{Q,\cU_{m+k}}(\Gamma_{B(z,2^{-k}),2})=0\right\}.
\]
\end{theorem}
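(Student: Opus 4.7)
The plan is to prove the two inequalities separately. Denote the right-hand side by $Q^*$. The inequality $\dims_{CAR}(X) \geq Q^*$ follows from quasisymmetric invariance of the discrete modulus combined with Ahlfors regularity of the target space; the inequality $\dims_{CAR}(X) \leq Q^*$ is the substantive Carrasco-Piaggio construction, which produces a $Q$-Ahlfors regular metric in the quasisymmetry class of $X$ whenever the moduli vanish.

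For $\dims_{CAR}(X) \geq Q^*$: Given $Q > \dims_{CAR}(X)$, pick $Q' \in (\dims_{CAR}(X), Q)$ and a $Q'$-Ahlfors regular space $Y$ with an $\eta$-quasisymmetry $f \colon X \to Y$. By Lemma \ref{lem:kapparoundinvariance} each $f(\cU_{m+k})$ is a $\kappa'$-approximation of $Y$, and $Y$ inherits the LLC property from $X$. By Lemmas \ref{lem:ballinclusion} and \ref{lem:relativedist} the image $f(B(z,2^{-k}))$ is trapped between concentric balls in $Y$ of comparable radii $R$, and the sets $f(U)$ that meet this region have radii comparable to $R \cdot 2^{-m}$. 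Since the discrete modulus depends only on the incidence combinatorics of $\cU_{m+k}$ and the family of paths, it suffices to bound the corresponding modulus in $Y$. Take the test function $\rho(V) := C \cdot r_V / R$ with $C$ large; any curve crossing the annulus $f(B(z,2 \cdot 2^{-k})) \setminus f(B(z,2^{-k}))$ has diameter $\gtrsim R$, forcing $\sum_{V \cap \gamma \neq \emptyset} r_V \gtrsim R/C'$, which gives admissibility. The $Q$-energy is then
\[
\sum_V \rho(V)^Q \lesssim N \cdot 2^{-mQ} \lesssim 2^{mQ'} \cdot 2^{-mQ} = 2^{-m(Q-Q')},
\]
where $N \lesssim 2^{mQ'}$ counts the $\kappa'$-round sets of radius $\sim R \cdot 2^{-m}$ meeting a ball of radius $\sim R$ in a $Q'$-Ahlfors regular space. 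This decay is uniform in $z$ and $k$ and tends to $0$ as $m \to \infty$, yielding $Q \geq Q^*$.

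For $\dims_{CAR}(X) \leq Q^*$: Given $Q > Q^*$, there are arbitrarily large $m$ with $\epsilon := \sup_{z,k} \Mod_{Q,\cU_{m+k}}(\Gamma_{B(z,2^{-k}),2})$ as small as desired. Following Carrasco-Piaggio, I would select near-optimal admissible weights $\rho_n \colon \cU_n \to [0,\infty)$ at each scale $n$ and define a new metric $d'$ on $X$ by taking infima of weighted chain-sums across a hierarchy of nested approximations. The two things to verify are (i) that $d'$ is quasisymmetric to $d$, because admissibility prevents distances from collapsing too quickly, and (ii) that $(X,d')$ is $Q$-Ahlfors regular, by controlling the $Q$-dimensional Hausdorff measure of balls via the uniform smallness of the $Q$-energies $\sum_{U \in \cU_n} \rho_n(U)^Q$.

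The main obstacle lies in this second direction. The competing requirements---preserving the quasisymmetry class while sharpening the Hausdorff dimension down to $Q$---pull the choice of weights in opposite directions: small weights push the dimension down but risk topological collapse, whereas admissibility forbids collapse but constrains how small the weights can be. Carrasco-Piaggio's resolution is a submultiplicative iterative inequality for the discrete moduli that, once $\epsilon$ is sufficiently small, yields summable tail decay across scales and thereby guarantees that the limiting metric is well-defined and Ahlfors $Q$-regular. I would adopt this construction as in \cite{carrasco} rather than reconstruct it; the conversions in Lemma \ref{lem:moduli} and Remark \ref{rmk:defapprox} ensure compatibility of the modulus used there with the $\Mod_{Q,\cU}(\Gamma)$ employed in this paper.
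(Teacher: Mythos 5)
The paper does not actually prove this statement: it is cited directly from Carrasco-Piaggio \cite{carrasco}, and the only internal work is the translation of their notation into the paper's framework via Lemma~\ref{lem:moduli} and Remark~\ref{rmk:defapprox}. So the ``paper's own proof'' is a citation, and your proposal is attempting strictly more than the paper does.

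Your overall structure is correct, and deferring the substantive direction $\dims_{CAR}(X)\leq Q^*$ to \cite{carrasco} is exactly what the paper does. However, the direction you do try to prove in detail, $\dims_{CAR}(X)\geq Q^*$, has a genuine gap. The step where you assert that ``the sets $f(U)$ that meet this region have radii comparable to $R\cdot 2^{-m}$'' is not justified by Lemmas~\ref{lem:ballinclusion} and \ref{lem:relativedist}, and it is false in general. A quasisymmetry distorts scales non-uniformly: the $\kappa$-local-boundedness from Lemma~\ref{lem:kapparoundinvariance} only makes adjacent image sets comparable, and chaining this across a ball at level $2^{-k}$ containing sets at level $2^{-m-k}$ degrades as $m\to\infty$. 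Quantitatively, for a power quasisymmetry with exponent $\alpha<1$ the image radii $r_{f(U)}$ range between roughly $\tilde\eta(2^m)^{-1}R\sim 2^{-m/\alpha}R$ and $\eta(2^{-m})R\sim 2^{-m\alpha}R$, a ratio that grows like $2^{m(1/\alpha-\alpha)}$. Redoing your counting with these bounds gives an energy estimate of order $2^{m(Q'/\alpha - Q\alpha)}$, which tends to zero only for $Q>Q'/\alpha^2$, not for all $Q>Q'$. So the argument does not close as written. The standard fix, as in Bourdon-Kleiner and Carrasco-Piaggio, is not to push $\cU_{m+k}$ forward to $Y$; rather, one works with $\kappa$-approximations of $Y$ at a fixed level and uses a modulus-comparison lemma across scales and across approximations (compare \cite[Lemma 2.3]{bourdonkleiner}), exploiting the fact that the quantity $\liminf_{m}\sup_{z,k}\Mod_{Q,\cU_{m+k}}(\Gamma_{B(z,2^{-k}),2})$ is a quasisymmetric invariant in the appropriate sense. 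That invariance is precisely what makes the characterization work, and it is not visible in your direct pushforward estimate.
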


\subsection{New discrete modulus}\label{subsec:newmod}

In this subsection, we introduce a new notion of modulus, which allows for an arbitrary $\kappa$-round collection $\cU$, which may or may not be a $\kappa$-approximation. Indeed, formally we shall permit that the collection even fails to be a cover. This brings the definition closer to that considered by Pansu and Tyson in \cite{pansu, tysonconfdim}. We note that it may be interesting to study more carefully the relationships between their modulus and the one presented here. However, since it would be a side track in the present paper, we do not pursue this here.


\begin{definition}\label{def:stronglyadmissible}
Fix $\tau \geq 4$. Let $\cU$ be a $\kappa$-round, and let $\Gamma$ a family of sets in $X$. We say that $\rho:\cU\to [0,\infty)$ is strongly discretely $\tau$-admissible for $\Gamma$, and write $\rho \stwedge_{\tau, \cU} \Gamma$, if for every $\gamma \in \Gamma$ there exists a collection $\cU_\gamma\subset \cU$ with the following properties:
\begin{enumerate}
\item[i)] $\{B(z_U, \tau r_U): U \in \cU_\gamma\}$ pairwise disjoint; 
\item[ii)] $U\cap \gamma \neq \emptyset$ for all $U\in \cU_\gamma$; and
\item[iii)] we have
\[
\sum_{U\in \cU_\gamma} \rho(U) \geq 1.
\]
\end{enumerate}
\end{definition}

Define
\[
\overline{\Mod}_{p,\tau}(\Gamma, \cU)=\inf_{\rho \stwedge_{\tau,\cU} \Gamma} \sum_{U\in \cU} \rho(U)^p.
\]


The first result we prove, is that the new modulus bounds from above the notion of discrete modulus defined before, when the collections are roughly at the same level.

\begin{proposition}\label{prop:discretemodulus} Let $k\in \N$.  Assume that $X$  is metrically doubling, and that $\kappa \geq 1$, $\tau\geq 4$. There exists a constant $C>0$ so that the following holds for $r>0$.

Suppose that $\cU$ is a $\kappa$-approximation at level $r$ and $\cV$ is a $\kappa$-round collection with $\kappa^{-1}r \leq r_V \leq r$ for every $V\in \cV$. If $\Gamma$ is a collection of curves in $X$, then
\[
\Mod_{p,\cU}(\Gamma) \leq C \overline{\Mod}_{p,\tau}(\Gamma, \cV).
\]
\end{proposition}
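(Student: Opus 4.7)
The plan is to take any strongly discretely $\tau$-admissible $\rho \stwedge_{\tau,\cV} \Gamma$ and push it forward to a discretely admissible function $\tilde\rho \wedge_{\cU} \cP_\Gamma$, with a controlled loss in $p$-energy. Concretely, I would define
\[
\tilde\rho(U) \defeq C_0 \max\{\rho(V) : V \in \cV,\ V \cap U \neq \emptyset\},
\]
with the convention $\tilde\rho(U) = 0$ when no such $V$ exists, and where $C_0$ is a constant (depending on $\kappa, \tau$ and the doubling constant $D$) to be specified below. The point of using the max, rather than the sum, is that $\cV$ is only assumed $\kappa$-round, so two different sets of $\cV$ may overlap arbitrarily; the max sidesteps this, while the disjointness built into strong admissibility controls how often the max is realized.

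For admissibility, fix $\gamma \in \Gamma$ and let $\cV_\gamma \subset \cV$ be the collection supplied by the strong admissibility. For each $V \in \cV_\gamma$ choose a point $p_V \in V \cap \gamma$, and let $U_V \in \cU$ be any element containing $p_V$ (using that $\cU$ covers $X$). Then $U_V \cap \gamma \ni p_V$, so $U_V \in P_\gamma$, and $U_V \cap V \ni p_V$, so $\tilde\rho(U_V) \geq C_0 \rho(V)$. The crucial combinatorial step is to bound the multiplicity of $V \mapsto U_V$: if $U_{V_1} = U_{V_2} = U$ with $V_i \in \cV_\gamma$, then $V_i \cap U \neq \emptyset$ forces $z_{V_i} \in B(z_U, r_{V_i} + r_U) \subset B(z_U, 2r)$, and hence the disjoint balls $B(z_{V_i}, \tau r_{V_i})$ are contained in $B(z_U, (2+\tau)r)$ while each has radius at least $\tau \kappa^{-1} r$. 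Metric doubling then gives a uniform bound $N_1 = N_1(\kappa, \tau, D)$ on the number of such $V_i$. Thus
\[
1 \leq \sum_{V \in \cV_\gamma} \rho(V) = \sum_{U \in P_\gamma} \sum_{\substack{V \in \cV_\gamma \\ U_V = U}} \rho(V) \leq \frac{N_1}{C_0} \sum_{U \in P_\gamma} \tilde\rho(U),
\]
so choosing $C_0 = N_1$ makes $\tilde\rho$ admissible for $\cP_\Gamma$.

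For the $p$-energy estimate, for each $U \in \cU$ with $\tilde\rho(U) > 0$ choose $V_U \in \cV$ realizing the maximum, so that $\tilde\rho(U)^p = C_0^p \rho(V_U)^p$. Now bound the multiplicity of $U \mapsto V_U$: if $V_U = V$, then $U \cap V \neq \emptyset$, so $z_U \in B(z_V, 2r)$; but the balls $\{B(z_U, \kappa^{-1} r) : U \in \cU\}$ are pairwise disjoint because $\cU$ is a $\kappa$-approximation, so metric doubling yields a bound $N_2 = N_2(\kappa, D)$ on the number of such $U$. Hence
\[
\sum_{U \in \cU} \tilde\rho(U)^p \leq C_0^p \sum_{V \in \cV} \rho(V)^p \#\{U : V_U = V\} \leq C_0^p N_2 \sum_{V \in \cV} \rho(V)^p.
\]
Taking the infimum over $\rho \stwedge_{\tau,\cV} \Gamma$ gives the desired inequality with $C = C_0^p N_2 = N_1^p N_2$, depending only on $\kappa, \tau, p$ and the doubling constant. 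The main technical obstacle is keeping the two doubling-based counting arguments cleanly separated, since they use disjointness at different scales (the $\tau$-inflations of $\cV_\gamma$ for admissibility, and the $\kappa^{-1}$-cores of $\cU$ for the energy bound); the rest of the argument is bookkeeping.
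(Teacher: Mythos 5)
Your proof is correct and takes essentially the same approach as the paper: define $\tilde\rho$ via a max over intersecting sets of $\cV$ scaled by a constant, verify admissibility by grouping the $V\in\cV_\gamma$ according to which $U\in\cU$ covers a point of $V\cap\gamma$ (with multiplicity controlled by doubling and the disjointness of the $\tau$-inflations), and control the $p$-energy by grouping the $U$ according to the $V$ realizing the max (with multiplicity controlled by doubling and the disjointness of the $\kappa^{-1}$-cores of $\cU$). The paper organizes the two counting bounds a bit differently but the underlying construction and both multiplicity arguments are identical in substance.
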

\begin{proof} 
Since $X$ is metrically doubling, and by an argument similar to that in Lemma \ref{lem:moduli}, there is a constant $D$ so that each $U\in \cU$ intersects at most $D$ pairwise disjoint sets in $\cV$. Similarly, for each $V\in \cV$ there are at most $D$ many $U\in \cU$ with $U\cap V \neq \emptyset$.

Without loss of generality, assume that $\overline{\Mod}_{p,\kappa,\tau}(\Gamma, \cV)<\infty$. Let $\overline{\rho}\wedge_{\tau, \cV} \Gamma$ be any admissible function so that 
\[
\sum_{V\in \cV} \overline{\rho}(V)^p<\infty.
\]
For each $U\in \cU$, define
\[
\rho(U)=D\max\{\overline{\rho}(V) : U\cap V \neq \emptyset, V\in \cV\},
\]
if there exists some $V\in \cV$ with $U\cap V \neq \emptyset$. If there does not exist any $V\in \cV$ with $V\cap U \neq \emptyset $, set $\rho(U)=0.$

For each $U\in \cU$, for which it is possible, choose one $V_U\in \cV$ so that $U\cap V \neq \emptyset$ and $\rho(U)=D\overline{\rho}(V)$. Let $\cU_V=\{U\in \cU : V_U=U\}$ for $V\in \cV$. We have, by the first paragraph of the proof, that $|\cU_V|\leq D$.

We claim that $\rho \wedge_{\cU} \cP_\Gamma$. Let $\gamma\in \Gamma$ be arbitrary. Let $\cV_\gamma\subset \cV$ be the collection, where each set intersects $\gamma$ and such that the collection $\{B(z_V,\tau r_V): V\in \cV_\gamma\}$ is disjoint with
\[
\sum_{V\in \cV_\gamma}\overline{\rho}(V)\geq 1. 
\]
Let $\cU_\gamma := \{U\in \cU: U\cap \gamma \neq \emptyset\}$. Since $\cU$ is a cover of $X$, for each $V\in \cV_\gamma$, we may choose a $U^V\in \cU$ so that $U \cap (V \cap \gamma) \neq \emptyset$. For each $U\in \cU$, let $\cV_{\gamma, U}=\{V \in \cV_\gamma : U^V=U\}$.   This means that 
\begin{equation}\label{eq:cvgamma}
\cV_\gamma \subset \bigcup_{U\in \cU_\gamma} \cV_{\gamma,U}
\end{equation}
We also have by the first paragraph of the proof that $|\cV_{\gamma, U}|\leq D$ for every $U\in \cU_\gamma$.  Thus, for every $U\in \cU_\gamma$, we have
\begin{equation}\label{eq:rhobarineq}
\rho(U)\geq \sum_{V\in \cV_{\gamma, U}} \overline{\rho}(V).
\end{equation}
By applying \eqref{eq:cvgamma} and \eqref{eq:rhobarineq} we get:
\begin{align*}
\sum_{U\cap \gamma \neq \emptyset} \rho(U) & \geq \sum_{U\cap \gamma \neq \emptyset} \sum_{V\in \cV_{\gamma, U}} \overline{\rho}(V) \\
&\geq \sum_{V\in \cV_\gamma} \overline{\rho}(V) \geq 1.
\end{align*}

Note that $\cU = \bigcup_{V\in \cV} \cU_V$. By using this, we estimate the $p$-energy of $\rho$ using the bound $|\cU_V|\leq D$ for every $V\in \cU$. 
\begin{align*}
\sum_{U\in \cU} \rho(U)^p &\leq \sum_{V \in \cV} \sum_{U \in \cU_V} \rho(U)^p \\
&\leq \sum_{V \in \cV}  D^p |\cU_V|\overline{\rho}(V)^p \\
&\leq \sum_{B \in \cB}  D^{p+1}\overline{\rho}(V)^p.
\end{align*}
Thus, the claim holds for $C=D^{p+1}$ after we take an infimum over $\overline{\rho} \wedge_{\tau, \cV} \Gamma$.
\end{proof}

One  of the benefits of this notion of modulus, is that we can give simple bounds for it in terms of the Hausdorff measure of the space. The following will be an example of such a bound, which we will use. Recall the definition of Hausdorff content $\cH_\delta^p$ from \eqref{def:Hausdorffcontent}.

\begin{proposition}\label{prop:modulusboundhausdorff}Let $\kappa\geq 1, \tau\geq 4$. Let $X$ be any connected compact metric space, and suppose that $\Gamma$ is a family of curves, where each curve in $\Gamma$ is contained in a ball $B(x,R)\subset X$ and has diameter at least $r$. Then, for every $\epsilon\in(0,1), \delta\in (0,\diam(X)/2)$ there exists a $\kappa$-round collection $\cV$ for which
\[
\overline{\Mod}_{p,\tau}(\Gamma, \cV)\leq \frac{(20\tau)^p\cH_\delta^p(B(x,R))+\epsilon}{r^p},
\]
and $\sup_{V\in \cV}r_V\leq \delta$ and each ball in $\cV$ intersects $B(x,R)$ as well as some curve in $\Gamma$.
\end{proposition}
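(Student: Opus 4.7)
I plan to construct $\cV$ from a near-optimal Hausdorff $p$-content cover of $B(x,R)$ at scale $\delta$, by promoting each cover element meeting a curve of $\Gamma$ to an open ball centered on such an intersection point, and then to verify strong $\tau$-admissibility via a $5B$-covering extraction combined with the chain-diameter inequality for connected sets. The density $\rho$ will assign each ball mass proportional to $\diam(A_i)/r$.

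Concretely, fix $\epsilon' > 0$ small (to be chosen in terms of $\epsilon$ and $\tau$) and take a finite cover $\{A_i\}_{i\in I}$ of $B(x,R)$ with $\diam(A_i)\leq \delta$ and $\sum_i \diam(A_i)^p \leq \cH_\delta^p(B(x,R))+\epsilon'$. For each $i$ such that $A_i$ meets some $\gamma_i\in\Gamma$, pick $z_i\in A_i\cap\gamma_i$ and define $V_i:=B(z_i, 2\diam(A_i))$ (a small rescaling is used to keep $r_{V_i}\leq \delta$, absorbed into the constant). Then $A_i\subset V_i$, $\cV:=\{V_i\}$ is a $1$-round (hence $\kappa$-round) collection, every ball meets both $B(x,R)$ and some curve of $\Gamma$, and $\sup_V r_V\leq \delta$. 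Set $\rho(V_i):=(20\tau/r)\diam(A_i)$. To verify strong $\tau$-admissibility, fix $\gamma\in\Gamma$ and let $\cV_\gamma^{\mathrm{all}}:=\{V\in\cV : V\cap\gamma\neq\emptyset\}$; since $A_i\subset V_i$ and $\{A_i\}$ covers $\gamma\subset B(x,R)$, we have $\gamma\subset \bigcup\cV_\gamma^{\mathrm{all}}$. Applying the $5B$-covering lemma to the family $\{B(z_V,\tau r_V): V\in\cV_\gamma^{\mathrm{all}}\}$ extracts a subfamily $\cV_\gamma\subset\cV_\gamma^{\mathrm{all}}$ with pairwise disjoint $\tau$-inflations (condition (i)) that still meet $\gamma$ (condition (ii)), and with $\gamma\subset \bigcup_{V'\in\cV_\gamma}B(z_{V'},5\tau r_{V'})$. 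The standard chain-diameter inequality for the connected compact set $\gamma$ covered by these open balls then yields
\[
r \leq \diam(\gamma) \leq \sum_{V'\in\cV_\gamma}\diam\bigl(B(z_{V'},5\tau r_{V'})\bigr) \leq 10\tau\sum_{V'\in\cV_\gamma} r_{V'} \leq 20\tau\sum_{V'\in\cV_\gamma}\diam(A_{V'}),
\]
i.e.\ $\sum_{V'\in\cV_\gamma}\rho(V')\geq 1$, giving (iii).

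Finally, the $p$-energy satisfies
\[
\sum_{V\in\cV}\rho(V)^p = \Bigl(\frac{20\tau}{r}\Bigr)^p\sum_i \diam(A_i)^p \leq \frac{(20\tau)^p\bigl(\cH^p_\delta(B(x,R))+\epsilon'\bigr)}{r^p},
\]
and choosing $\epsilon':=\epsilon/(20\tau)^p$ gives the claimed bound. The main delicate point I expect is the scale reconciliation when defining $V_i$: requiring $V_i$ to be an open ball strictly containing $A_i$ while preserving $r_{V_i}\leq \delta$ needs a slight enlargement of the natural radius $\diam(A_i)$, and this enlargement, together with the factor $2$ arising from converting radii to diameters in the chain estimate, is what produces the constant $(20\tau)^p$.
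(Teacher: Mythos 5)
Your proof is essentially the same as the paper's (near-optimal cover of $B(x,R)$ at scale $\delta$, promotion to balls, $5B$-covering extraction, chain-diameter estimate for the connected compact set $\gamma$). The one point to fix: taking $V_i = B(z_i, 2\diam(A_i))$ gives $r_{V_i}\leq 2\delta$, not $\leq\delta$, and this cannot be ``absorbed into the constant'' since $\sup_V r_V\leq\delta$ is an explicit conclusion of the statement; but using $V_i=B(z_i,\diam(A_i))$ instead (so that $A_i\subset\overline{V_i}\subset\tau V_i$, which still suffices for the covering argument) restores $r_{V_i}\leq\delta$, and the chain estimate then yields $\sum_{V'\in\cV_\gamma}\rho(V')\geq 2$, so strong admissibility holds with room to spare. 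Also, the $(20\tau)^p$ constant actually arises on the energy side from $\diam(V)\leq 2\diam(A_i)$ (the paper's ``enclosing ball'' step), not from enlarging the radius for admissibility, but this is immaterial to correctness.
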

\begin{proof}
Fix $\epsilon>0$. From the definition of Hausdorff content in \eqref{def:Hausdorffcontent}, and by replacing each set in the cover by an enclosing ball, we can find a covering $\cV$ of $B(x,R)$ by balls $V=B=B(z_V, r_V)$ with $r_V\leq \delta$ so that
\[
\sum_{V\in \cV} \diam(V)^p \leq 2^p \cH_\delta^p(B(x,R))+(10\tau)^{-p}\epsilon.
\]
Moreover, by possibly making the collection smaller, we assume that each ball $V\in \cV$ intersects $B(x,R)$ and some curve in $\Gamma$. This modified collection still covers every curve $\gamma\in \Gamma$. Now, $\cV$ is $\kappa$-round with $\kappa=1$. Let $\rho(V)=10\tau\diam(V)/r$. We have by the choice of $\cV$ and $\rho$ that
\[
\sum_{V\in \cV} \rho(V)^p \leq \sum_{V\in \cV} (10\tau)^p \diam(V)^p r^{-p} \leq \frac{(20\tau)^p\cH_\delta^p(B(x,R))+ \epsilon}{r^{p}}.
\]
Therefore, the claim will follow once  we show that $\rho \overline{\wedge}_{\tau,\cV} \Gamma$. Let $\gamma \in \Gamma$. We need to find a collection $\cV_\gamma$ so that the properties i,ii and iii from Definition \ref{def:stronglyadmissible} hold. Since $\cV$ is a cover of $\gamma$, we have that $\{B(z_V,\tau r_V)\}$ is a cover of $\gamma$. Applying the $5r$-covering lemma, we get a finite subcollection $\cV_\gamma \subset \cV$ so that i) $\{B(z_V,\tau r_V) : V\in \cV_\gamma\}$ is pairwise disjoint, ii) so that $\gamma \cap V\neq \emptyset$ for all $V\in \cV_\gamma$ and so that we have that $\cV'=\{B(z_V,5\tau r_V) : V\in \cV_\gamma\}$ is a covering of $\gamma$. Note that 
\[
\diam(B(z_V,5\tau r_V))\leq 10\tau r_V\leq 10\tau \diam(V)=\rho(V)r.
\] 
Since the balls $\{B(z_V,5\tau r_V) : V\in \cV_\gamma\}$ cover $\gamma$, we get
\[
\sum_{V\in \cV, V\cap \gamma \neq \emptyset} \rho(V)\geq \sum_{V\in \cV'} \diam(B(z_V,5\tau r_V))/r \geq \diam(\gamma)/r \geq 1.
\]
Thus, $\rho \overline{\wedge}_{\tau,\cV} \Gamma$ and the claim follows.
\end{proof}

The new notion of modulus is also invariant under quasisymmetries, except for adjusting the $\tau$ parameter. In the following, if $\Gamma$ is a collection of curves in $X$ and $f:X\to Y$ is a homeomorphism, we write $f(\Gamma)=\{f\circ \gamma: \gamma \in \Gamma\}$. The opposite inequality can be obtained by adjusting $\tau$, and applying this lemma to the inverse mapping $f^{-1}$.

\begin{lemma}\label{lem:quasiinv} Let $\tau \geq 4$ and let $f:X\to Y$ be an $\eta$-quasisymmetry.  If $\cV$ is a $\kappa$-round collection, and if $\Gamma$ is any collection of curves in $X$, then
\[
\overline{\Mod}_{p,\tau}(\Gamma, \cV)\leq \overline{\Mod}_{p,\max\{4,\eta(\tau)\}}(f(\Gamma), f(\cV)).
\]
\end{lemma}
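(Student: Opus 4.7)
\emph{Plan.} The plan is to pull back an admissible function through $f$. Let $\tilde{\tau}:=\max\{4,\eta(\tau)\}$ and take any $\rho\stwedge_{\tilde{\tau},f(\cV)}f(\Gamma)$. Define $\tilde{\rho}:\cV\to[0,\infty)$ by $\tilde{\rho}(V):=\rho(f(V))$. Since $f$ is a homeomorphism, the assignment $V\mapsto f(V)$ is a bijection from $\cV$ onto $f(\cV)$, so the $p$-energies coincide: $\sum_{V\in\cV}\tilde{\rho}(V)^{p}=\sum_{W\in f(\cV)}\rho(W)^{p}$. Taking the infimum over such $\rho$ at the end yields the claimed inequality, provided I can verify that $\tilde{\rho}\stwedge_{\tau,\cV}\Gamma$.

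For admissibility, fix $\gamma\in\Gamma$ and apply the admissibility of $\rho$ to the curve $f(\gamma)\in f(\Gamma)$ to obtain a subcollection $\cF\subset f(\cV)$ satisfying the three conditions of Definition~\ref{def:stronglyadmissible} at parameter $\tilde{\tau}$. I set $\cV_{\gamma}:=\{V\in\cV:f(V)\in\cF\}$. Conditions (ii) and (iii) for the pair $(\gamma,\cV_{\gamma})$ at parameter $\tau$ are immediate: $V\cap\gamma=f^{-1}(f(V)\cap f(\gamma))\neq\emptyset$ since $f$ is a homeomorphism, and $\sum_{V\in\cV_{\gamma}}\tilde{\rho}(V)=\sum_{W\in\cF}\rho(W)\geq 1$ by definition of $\tilde{\rho}$.

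The substantive step is the disjointness condition (i), namely that $\{B(z_{V},\tau r_{V}):V\in\cV_{\gamma}\}$ is pairwise disjoint in $X$. I would argue by contradiction: suppose two such balls, associated to distinct $V,U\in\cV_{\gamma}$, share a point $w$. Using the $\kappa$-roundness of $\cV$, I pick $y_{V}\in V$ and $y_{U}\in U$ with $d(z_{V},y_{V})\geq r_{V}/2$ and $d(z_{U},y_{U})\geq r_{U}/2$, which exist because $r_{V},r_{U}$ are defined as suprema. Applying the $\eta$-quasisymmetry inequality to the triples $(z_{V},w,y_{V})$ and $(z_{U},w,y_{U})$ then gives
\[
d(f(z_{V}),f(w))\leq\eta\!\left(\frac{d(z_{V},w)}{d(z_{V},y_{V})}\right)d(f(z_{V}),f(y_{V}))\leq\eta(2\tau)\,r_{f(V)},
\]
and symmetrically $d(f(z_{U}),f(w))\leq\eta(2\tau)\,r_{f(U)}$. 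Thus $f(w)$ lies in the intersection of the $\eta(2\tau)$-inflations of the circumscribing balls $B(z_{f(V)},r_{f(V)})$ and $B(z_{f(U)},r_{f(U)})$; by Lemma~\ref{lem:kapparoundinvariance} these balls are exactly the ones attached to $f(V),f(U)$ in the $\kappa'$-round collection $f(\cV)$, which contradicts condition (i) of Definition~\ref{def:stronglyadmissible} for $\cF$ at parameter $\tilde{\tau}$ once $\tilde{\tau}$ absorbs $\eta(2\tau)$.

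I expect the only real obstacle to be this bookkeeping of the quasisymmetry constants — matching the $\eta(2\tau)$ produced above against the stated $\tilde{\tau}=\max\{4,\eta(\tau)\}$. Either a slightly sharper choice of the comparison points $y_{V},y_{U}$ (exploiting that $d(z_{V},y_{V})$ can be taken arbitrarily close to $r_{V}$) or a mild tightening of the distortion function should close the small numerical gap; beyond that, the construction is a routine bijective translation via $f$.
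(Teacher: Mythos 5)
Your proof is correct and follows essentially the same route as the paper: pull back an admissible function through the bijection $V\mapsto f(V)$, observe the $p$-energies match, and verify the three conditions of Definition~\ref{def:stronglyadmissible}, with the disjointness condition being the only substantive one. The paper handles disjointness by applying Lemma~\ref{lem:ballinclusion} to get the inclusion $f(B(z_V,\tau r_V))\subset B(z_{f(V)},\eta(2\tau)r_{f(V)})$ directly, whereas you argue by contradiction with the quasisymmetry inequality applied to triples $(z_V,w,y_V)$; these are the same estimate packaged differently.

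The ``numerical gap'' you flag is not a deficiency of your argument but of the paper's statement: the paper's own proof sets $\tau'=\max\{4,\eta(2\tau)\}$ and produces exactly the $\eta(2\tau)$ factor you derived, so its proof also does not reach the stated $\max\{4,\eta(\tau)\}$ verbatim. This is harmless because the lemma is only ever used qualitatively (any finite inflation of $\tau$ suffices). Your proposed fix is also right: since $d(z_V,w)<\tau r_V$ strictly and $r_V$ is a supremum, one can choose $y_V\in V$ with $d(z_V,y_V)>d(z_V,w)/\tau$, giving ratio strictly below $\tau$ and hence $d(f(z_V),f(w))<\eta(\tau)\,r_{f(V)}$, which does recover the stated constant. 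You may want to note explicitly that if no $\rho\stwedge_{\tilde\tau,f(\cV)}f(\Gamma)$ exists the right side is $\infty$ and the inequality is vacuous, but that is a triviality.
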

\begin{proof} Let $\tau'=\max\{4,\eta(2\tau)\}$.
By Lemma \ref{lem:kapparoundinvariance}, we have that $f(\cV)$ is a $\kappa'$-round collection for some $\kappa'$. Let $\rho \overline{\wedge}_{\tau',f(\cV)} f(\Gamma)$. Define $\overline{\rho}(V)=\rho(f(V))$ for $V\in \cV$. We clearly have
\[
\sum_{V\in\cV} \overline{\rho}(V)^p=\sum_{V\in f(\cV)} \rho(V)^p.
\]
Thus, the claim will follow, if we can show that $\overline{\rho}\overline{\wedge}_{\tau,\cV} \Gamma$.

In the following, elements of $f(\cV)$ will be written as $f(V)$, where $V\in \cV$. Let $\gamma\in \Gamma$. Then, $f\circ \gamma \in f(\Gamma)$ and, since $\rho \overline{\wedge}_{\tau',f(\cV)} f(\Gamma)$, there exists a collection $\cU_{f(\gamma)}\subset f(\cV)$ so that 
\begin{enumerate}
\item[i)] $\{ B(z_{f(V)}, \tau' r_{f(V)}) : f(V)\in \cU_{f(\gamma)}\}$ is pairwise disjoint;
\item[ii)] $f(V)$ intersects $\gamma$ for every $f(V)\in \cU_{f(\gamma)}$;
\item[iii)] we have
\[
\sum_{U\in \cU_{f(\gamma)}} \rho(U)\geq 1.
\]
\end{enumerate}

Let $\cU_\gamma = \{V\in \cV: f(V)\in \cU_{f(\gamma)}\}$. We need to check the three properties from Definition \ref{def:stronglyadmissible} of $\overline{\rho}\overline{\wedge}_{\tau,\cV} \Gamma$:
\begin{enumerate}
\item[a)] $\{ B(z_{V}, \tau r_{V}) : V\in \cU_{\gamma}\}$ is pairwise disjoint;
\item[b)] $V$ intersects $\gamma$ for every $V\in \cU_{\gamma}$;
\item[c)] we have
\[
\sum_{U\in \cU_{\gamma}} \overline{\rho}(U)\geq 1.
\]
\end{enumerate}

From these, b) and c) follow immediately from the properties ii) and iii) above. By Lemma \ref{lem:ballinclusion}, we have
\[
f(B(z_{V}, \tau r_{V}))\subset f(B(z_{f(V)}, \eta(2\tau) r_{f(V)}),
\]
for every $V\in \cU_{\gamma}$. Since $\tau'\geq \eta(2\tau)$, the disjointness in a) follows from that in i).
\end{proof}

\section{Combinatorially Loewner Spaces}\label{sec:combloew}

\subsection{Definition and basic property}
For two closed sets $E,F$, let $\Gamma(E,F)$ be the collection of curves which join them. We adapt the definition of Bourdon and Kleiner of the combinatorial Loewner property slightly, as modified by Clais in \cite[Definition 2.6]{clais}. Let $\cU_k$ be a sequence of $\kappa$-approximations at level $2^{-k}$.

\begin{definition}\label{def:combloew} Fix $p> 1$. We say that a compact LLC space $X$ satisfies the combinatorial $p$-Loewner property, if there exist some increasing continuous functions $\phi,\psi:(0,\infty)\to (0,\infty)$ with $\lim_{t\to 0} \psi(t)=0$, with the following two properties.
\begin{enumerate}
\item For every pair of disjoint continua $E,F\subset X$ and all $k\geq 0$ with $2^{-k}\leq \min\{\diam(E),\diam(F)\}$, we have
\[
\phi(\Delta(E,F)^{-1})\leq \Mod_{p,\cU_k}(\Gamma(E,F)).
\]
\item For every $z\in X$ and $0<r<R$ and all $k\geq 0$ with $2^{-k}\leq r$, we have
\[
\Mod_{p,\cU_k}(\Gamma(\overline{B(z,r)},X\setminus B(z,R)))\leq \psi\left(\frac{r}{R-r}\right).
\]
\end{enumerate}
\end{definition}

Spaces with the combinatorial $p$-Loewner property are also called CLP -spaces or $p$-CLP spaces, if we wish to explicate the exponent $p>1$.

We first note that a combinatorially $p$-Loewner space has conformal Assoad dimension, as well as Ahlfors regular conformal dimension, equal to  $p$. This Lemma is quite well known and is a rather direct consequence from the known Theorem \ref{thm:confhausdim}. However, we present a proof for the sake of clarity, and since its proof does not appear to have been published elsewhere. The proof is very similar, or rather a localized version, of the proof of \cite[Corollary 3.7]{bourdonkleiner}. Later, we will prove Theorem \ref{thm:mainthmcombloew}, which is one of our main contributions, and which improves the following statement by showing that also $\dims_{CH}(X)=p$. 

\begin{lemma}\label{lem:confdimLoew} For a compact LLC space $X$, which is combinatorially $p$-Loewner, it holds that
\[
\dims_{CA}(X)=\dims_{CAR}(X)=p.
\]
\end{lemma}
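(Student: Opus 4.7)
The strategy is to first reduce to showing $\dims_{CAR}(X)=p$ via uniform perfectness, and then establish both inequalities using Theorem~\ref{thm:confhausdim}, reading off upper and lower bounds on discrete modulus from the two CLP estimates.

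Since $X$ is LLC and compact, it is connected, and applying the intermediate value theorem to $w\mapsto d(z,w)$ shows every sphere of radius $r<\diam(X)$ is non-empty, so $X$ is uniformly perfect. Then \cite[Proposition~2.2.6]{MTbook} yields $\dims_{CA}(X)=\dims_{CAR}(X)$, and it suffices to compute $\dims_{CAR}(X)$.

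For $\dims_{CAR}(X)\ge p$, I would use CLP condition~(1) to show that the $p$-modulus never vanishes. Fix $z\in X$ and a sufficiently large $k$, and use LLC together with connectedness to produce continua $E\subset \overline{B(z,2^{-k})}$ and $F\subset X\setminus B(z,2^{-k+1})$ of diameter comparable to $2^{-k}$, with $\Delta(E,F)$ bounded above by a constant $C$ depending only on the LLC data. Condition~(1) then gives $\Mod_{p,\cU_{m+k}}(\Gamma(E,F))\ge \phi(C^{-1})>0$ for all $m$ large enough, and since $\Gamma(E,F)\subset \Gamma_{B(z,2^{-k}),2}$ the same lower bound passes to $\Mod_{p,\cU_{m+k}}(\Gamma_{B(z,2^{-k}),2})$. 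This contradicts Theorem~\ref{thm:confhausdim} for any $Q<p$, after converting the lower bound at exponent $p$ to one at exponent $Q$ via a H\"older argument exploiting metric doubling (each curve meets a controlled number of sets in $\cU_{m+k}$ by packing).

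For $\dims_{CAR}(X)\le p$, I would use CLP condition~(2). Applied with $r=2^{-k}$ and $R=2^{-k+1}$, it gives the uniform bound $\Mod_{p,\cU_{m+k}}(\Gamma_{B(z,2^{-k}),2})\le \psi(1)$. To convert this bounded $p$-modulus into vanishing $Q$-modulus for $Q>p$ at fine scales, as required by Theorem~\ref{thm:confhausdim}, I would follow the refinement/spreading strategy of \cite[Corollary~3.7]{bourdonkleiner}: given a near-optimal $p$-admissible function $\rho_m$ on $\cU_{m+k}$, construct local $p$-admissible weights $\sigma_U$ on $\cU_{m+k+m'}$ for each $U\in\cU_{m+k}$ by another application of condition~(2) around $U$, and define the spread function $\rho'(V)=\sum_U \rho_m(U)\sigma_U(V)$ on the finer $\kappa$-approximation. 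The main technical obstacle is this upper bound: the local spreading must be coordinated globally so that $\rho'$ is admissible for every curve in $\Gamma_{B(z,2^{-k}),2}$, while the $Q$-energy $\sum_V \rho'(V)^Q$ must decay as $m'\to\infty$. Care is needed to avoid double counting from overlapping supports of the $\sigma_U$'s, handled via the local boundedness of $\kappa$-approximations (Definition~\ref{def:kapprox}) and Lemma~\ref{lem:kapparoundinvariance}.
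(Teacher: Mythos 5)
Your reduction to $\dims_{CAR}(X)$ via uniform perfectness and your treatment of the inequality $\dims_{CAR}(X)\geq p$ match the paper's approach, with one caveat: the ``H\"older/packing conversion'' you invoke to pass from exponent $p$ to exponent $Q<p$ is both unnecessary and would not work as stated. A curve of diameter $\sim 2^{-k}$ necessarily meets $\gtrsim 2^m$ sets in $\cU_{m+k}$, so there is no uniform packing bound on the number of sets met. But no conversion is needed: once you have $\liminf_{m\to\infty}\sup_{z,k}\Mod_{p,\cU_{m+k}}(\Gamma_{B(z,2^{-k}),2})\geq\phi(6^{-1})>0$, the formula in Theorem~\ref{thm:confhausdim} together with the monotonicity $\Mod_{Q'}\leq\Mod_{Q}$ for $Q'>Q$ (cap an optimal $\rho$ at $1$; admissibility is preserved and the energy drops) implies the set of exponents $Q$ at which the liminf vanishes is an up-set not containing $p$, hence $\dims_{CAR}(X)\geq p$.

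For the inequality $\dims_{CAR}(X)\leq p$ there is a genuine gap, which you flag yourself. The spreading/refinement construction $\rho'(V)=\sum_U\rho_m(U)\sigma_U(V)$ requires coordinating admissibility across two scales and controlling overlaps, and you do not resolve those difficulties. The paper avoids the construction entirely with a much shorter argument: take a minimizer $\rho$ for $\Mod_{p,\cU_{m+k}}(\Gamma_{B(z,2^{-k}),2})$, and bound it pointwise using the inequality $\rho(U)\leq\Mod_{p,\cU_{m+k}}(\Gamma_U)$ from Lemma~2.3 of Bourdon--Kleiner, where $\Gamma_U$ is the subfamily of curves meeting $U$. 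Any such curve crosses the annulus $A(z_U,r_U,2^{m-1}r_U)$, so CLP condition~(2) yields $\rho(U)\leq\psi(2^{2-m})$ uniformly in $U$. The elementary inequality
\[
\sum_{U}\rho(U)^{p+\epsilon}\leq\Bigl(\max_U\rho(U)\Bigr)^{\epsilon}\sum_U\rho(U)^p\leq\psi(2^{2-m})^{\epsilon}\psi(1)\xrightarrow[m\to\infty]{}0
\]
then gives the vanishing at exponent $p+\epsilon$ directly, with no refinement, no spreading, and no overlap accounting. This pointwise-bound-plus-energy-comparison step is the missing ingredient in your sketch.
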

\begin{proof} Let $\psi$ and $\psi$ be the functions appearing in Definition \ref{def:combloew}.

Since $X$ is a compact LLC space, it is uniformly perfect, and by \cite[Proposition 2.2.6]{MTbook} and \cite[Chapters 14 and 15]{hei01} we have $\dims_{CA}(X)=\dims_{CAR}(X)$. Next, let $\cU_k$ be a sequence of $\kappa$-approximations at levels $2^{-k}$ for $k\in \N$. Let $z\in X$ and $0<r\leq \diam(X)/4$. Then, by the LLC property, there exists a continuum $E\subset \overline{B(z,r)}$ with $\diam(E)\geq r$ and another continuum $F\subset \overline{B(z,3r)}\setminus B(z,2r)$ with $\diam(F)\geq r$. Since every curve connecting $E$ to $F$ contains a sub-curve within $\Gamma(\overline{B(z,r)}, X\setminus B(z,2r))$, we have
\[
\Mod_p(\Gamma(E,F),\cU_k)\leq \Mod_p(\Gamma(\overline{B(z,r)}, X\setminus B(z,2r)),\cU_k). 
\]

However, by the CLP property and since $\Delta(E,F)\leq 6$, we get for all $k\geq 0$ such that $2^{-k}\leq r$ that
\[
\phi(6^{-1})\leq \Mod_p(\Gamma(E,F),\cU_k)\leq  \Mod_p(\Gamma(\overline{B(z,r)}, X\setminus B(z,2r)),\cU_k). 
\]
We thus get:
\[\liminf_{m\to \infty}\sup_{z\in X, k\geq 0} \Mod_{p,\cU_{m+k}}(\Gamma_{B(z,2^{-k}),2})\geq \phi(6^{-1}).
\]
Thus, $p\leq \dims_{CAR}(X)$ by Theorem \ref{thm:confhausdim}.

The inequality $p\geq \dims_{CAR}(X)$ follows by showing that for all $\epsilon>0$, we have 
\[\lim_{m\to \infty}\sup_{z\in X, k\geq 0} \Mod_{p+\epsilon,\cU_{m+k}}(\Gamma_{B(z,2^{-k}),2})=0.
\]
The idea in showing this is to compare the discrete moduli with exponents $p+\epsilon$ and $p$. Indeed, we will show that for all $m\geq 3$ we have
\begin{equation}\label{eq:crucialestimate}
\Mod_{p+\epsilon,\cU_{m+k}}(\Gamma_{B(z,2^{-k}),2})\leq \psi(2^{2-m})^{\epsilon} \Mod_{p,\cU_{m+k}}(\Gamma_{B(z,2^{-k}),2}) \leq \psi(2^{2-m})^{\epsilon} \psi(1). 
\end{equation}
Then, since $\lim_{t\to 0}\psi(t)=0$, the claim follows. 

Let $\rho$ be the optimal function for $\Mod_{p,\cU_{m+k}}(\Gamma_{B(z,2^{-k}),2})$, which exists by Lemma \ref{lem:moduli}. We will show that $\rho(U)\leq \psi(2^{1-m})$ for every $U\in \cU_{m+k}$. This uses a bound for modulus coming from \cite[Lemma 2.3]{bourdonkleiner}, which in turn relies on estimating the modulus of the curves which pass through the set $U$. Let $U\in \cU_{m+k}$. Let $\Gamma_U$ be the collection of curves in $\Gamma_{B(z,2^{-k}),2}$ which intersect $U$. Then any curve in $\Gamma_{B(z,2^{-k}),2}$ which intersects $U$ will contain a sub-curve connecting  $\overline{B(z_U, r_U)}$ to $X\setminus B(z_U, 2^{m-1} r_U)$. Thus, 
\[
\Mod_{p,\cU_k}(\Gamma_U)\leq \Mod_{p,\cU_k}(\Gamma(\overline{B(z_U, r_U)}, X\setminus B(z_U, 2^{m-1} r_U))\leq \psi(2^{2-m}).
\]

By \cite[Lemma 2.3]{bourdonkleiner}, we get for all $U\in \cU_{m+k}$
\[
\rho(U)\leq \Mod_{p,\cU_k}(\Gamma_U)\leq \psi(2^{2-m}).
\]
This, together with the optimality of $\rho$ yields
\[
\sum_{U\in \cU} \rho(U)^{p+\epsilon}\leq \max_{U\in \cU} \rho(U)^\epsilon \sum_{U\in \cU} \rho(U)^{p} \leq  \psi(2^{2-m})^\epsilon \Mod_{p,\cU_{m+k}}(\Gamma_{B(z,2^{-k}),2}),
\]
which is the desired estimate \eqref{eq:crucialestimate}.

\end{proof}

\subsection{Estimates for Modulus}

If the space is combinatorially Loewner, then we can give a lower bound of our modulus, which we introduced in Subsection \ref{subsec:newmod}, in terms of the Bourdon-Kleiner modulus. This is a strengthening of the Proposition \ref{prop:discretemodulus}. In a sense, the following Proposition is the starting point of our paper, since its argument was the first to be discovered.

\begin{proposition}\label{prop:discretemoduluscombloew} Let $k\in \N$, $p>1$.  Assume that $X$  is metrically doubling, LLC and combinatorially $p$-Loewner, and that $\kappa \geq 1$, $\tau\geq 4$. There exists a constant $C>0$ so that the following holds for $r>0$.

Suppose that $\cU$ is a $\kappa$-approximation at level $r$ and $\cV$ is a $\kappa$-round collection with $\inf\{ r_V: V\in \cV\} \geq 2r$. If $\Gamma$ is a collection of curves in $X$ with $2\tau \sup_{V\in \cV} r_V \leq \diam(\gamma)$ for all $\gamma\in \Gamma$, then
\[
\Mod_{p,\cU}(\Gamma) \leq C \overline{\Mod}_{p,\tau}(\Gamma, \cV).
\]
\end{proposition}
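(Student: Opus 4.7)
The plan is to take a strongly $\tau$-admissible function $\overline{\rho}$ for $(\Gamma, \cV)$, convolve it with CLP-produced annular admissibility functions at each $V\in\cV$, and thereby obtain a Bourdon--Kleiner admissible function $\rho$ on $\cU$ whose $p$-energy is controlled by that of $\overline{\rho}$. The hypothesis $r_V \geq 2r$ is exactly what is needed so that CLP property (2) applies at level $r$ to the annular family $\Gamma_V := \Gamma(\overline{B(z_V, r_V)}, X\setminus B(z_V, \tau r_V/2))$, producing, uniformly in $V$, an admissible function $\rho_V \colon \cU \to [0,\infty)$ with $\sum_U \rho_V(U)^p \leq 2 C_1$ where $C_1 := \psi(2/(\tau-2))$.

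By restricting to the subfamily of curves in $\Gamma_V$ that stay in $\overline{B(z_V, \tau r_V/2)}$ and using $2r \leq \tau r_V/2$, I can arrange $\rho_V$ to be supported on $\cU_V := \{U \in \cU : U \subset B(z_V, \tau r_V)\}$ while retaining admissibility for $\Gamma_V$. Define $\rho := \sum_V \overline{\rho}(V) \rho_V$. To verify Bourdon--Kleiner admissibility for a fixed $\gamma \in \Gamma$, let $\cV_\gamma$ witness strong admissibility of $\overline{\rho}$. The diameter hypothesis $\diam(\gamma) \geq 2\tau \sup_{V \in \cV} r_V$ yields, for each $V \in \cV_\gamma$, a subcurve $\gamma_V$ of $\gamma$ starting in $V$ and exiting $\overline{B(z_V, \tau r_V/2)}$, so $\gamma_V \in \Gamma_V$ and $\sum_{U \cap \gamma_V \neq \emptyset} \rho_V(U) \geq 1$. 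The key combinatorial point is that for distinct $V_1, V_2 \in \cV_\gamma$, the disjointness of the $\tau$-inflations $B(z_{V_i}, \tau r_{V_i})$ forces $\cU_{V_1} \cap \cU_{V_2} = \emptyset$ (any $U$ in both would sit in an empty intersection), so the contributions from different $V \in \cV_\gamma$ live on disjoint pieces of $\cU$ and add cleanly:
\[
\sum_{U \in P_\gamma}\rho(U) \;\geq\; \sum_{V \in \cV_\gamma} \overline{\rho}(V) \sum_{U \in P_\gamma \cap \cU_V} \rho_V(U) \;\geq\; \sum_{V \in \cV_\gamma} \overline{\rho}(V) \;\geq\; 1.
\]

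The main obstacle will be bounding $\sum_U \rho(U)^p$, because although the $\cU_V$'s are disjoint within any single $\cV_\gamma$, across the whole collection $\cV$ they may overlap heavily (nothing prevents many $V \in \cV$ from having nearly coincident centers and widely varying radii). To handle this I would decompose $\cV$ dyadically into scales $\cV_k := \{V : r_V \in [2^k r, 2^{k+1} r)\}$; inside each $\cV_k$ a Vitali extraction adapted to the $\tau$-inflations, combined with metric doubling, yields a bounded-overlap subcollection onto which the $\overline{\rho}$-mass can be redistributed, giving a scale-wise estimate $\left\|\sum_{V \in \cV_k} \overline{\rho}(V) \rho_V\right\|_p^p \lesssim C_1 \sum_{V \in \cV_k} \overline{\rho}(V)^p$. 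Combining across scales without a logarithmic loss requires a Schur-type argument, exploiting that each $\rho_V$ is concentrated in the annular shell $B(z_V, \tau r_V/2)\setminus \overline{B(z_V, r_V)}$, so that cross-scale interactions are controlled by a kernel whose row and column sums are bounded by doubling alone. Taking the infimum over $\overline{\rho}$ then produces $\Mod_{p,\cU}(\Gamma) \leq C \overline{\Mod}_{p,\tau}(\Gamma, \cV)$, with $C$ depending only on $p$, $\kappa$, $\tau$, the doubling constant, and $\psi$.
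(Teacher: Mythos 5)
Your high-level plan matches the paper's: produce CLP-annular admissible functions $\rho_V$ on $\cU$ at each $V\in\cV$ with uniformly bounded $p$-energy, and combine them with the weights $\overline{\rho}(V)$. However, there is a genuine gap at the combination step, and it is exactly the gap you flag yourself. You define $\rho:=\sum_{V}\overline{\rho}(V)\rho_V$ (a pointwise sum), which makes the admissibility check painless, but destroys any hope of a clean energy bound: the collection $\cV$ is only required to be $\kappa$-round, with no bounded-overlap or packing hypothesis, and the strong admissibility of $\overline{\rho}$ constrains only the per-curve subcollections $\cV_\gamma$, not $\cV$ itself. So nothing prevents many balls of $\cV$ with comparable radii and near-coincident centers from all carrying mass; if $\cV$ contains $K$ essentially identical copies of a ball $V$ each with $\overline{\rho}\equiv c$, then $\rho\approx Kc\,\rho_V$ gives $\sum_U\rho(U)^p \approx K^p c^p\sum_U\rho_V(U)^p$ against $\sum_V\overline{\rho}(V)^p= Kc^p$, a factor $K^{p-1}$ loss that is unbounded. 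Your proposed rescue -- dyadic scale decomposition plus a Vitali/Schur argument with mass redistribution -- is not carried out, and it is not clear it can be made to work without additional structure on $\cV$ that the proposition does not assume.

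The paper avoids all of this with a single device: define $\rho(U):=\max\{\rho_V(U)\overline{\rho}(V):V\in\cV\}$ instead of a sum. Admissibility then requires the disjointness observation you already have (for distinct $V,V'\in\cV_\gamma$ the traversed sets of $\cU$ are disjoint, so the max over $\cU_\gamma$ accumulates $\sum_{V\in\cV_\gamma}\overline{\rho}(V)\geq 1$ without double counting). The energy bound becomes trivial: for each $U$ pick one $V_U$ attaining the max; this partitions $\cU$ into blocks $\cU^V=\{U: V_U=V\}$, and
\[
\sum_{U\in\cU}\rho(U)^p=\sum_{V\in\cV}\sum_{U\in\cU^V}\rho_V(U)^p\overline{\rho}(V)^p\leq\sum_{V\in\cV}\overline{\rho}(V)^p\sum_{U\in\cU}\rho_V(U)^p\leq 2C\sum_{V\in\cV}\overline{\rho}(V)^p,
\]
with no appeal to doubling, bounded overlap, or scale decomposition. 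So your outline is on the right track conceptually, but the $\max$ versus $\sum$ choice is not cosmetic: it is the step that makes the proof close, and the approach as written does not.
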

\begin{proof}
Again, assume that $\overline{\Mod}_{p,\tau}(\Gamma, \cV)<\infty$, and that $\overline{\rho}\wedge_{\tau, \cV} \Gamma$ with
\[
\sum_{V\in \cV} \overline{\rho}^p(V)<\infty.
\]

For each $V\in \cV$ consider the collection of curves $\Gamma_{V}=\Gamma(\overline{B(z_V, r_V)}, X\setminus B(z_V, (\tau-1) r_V))$. By the $p$-combinatorial Loewner assumption and since $r\leq r_V/2$, we have
\begin{equation}
\Mod_{p,\cU}(\Gamma_V)\leq C,
\end{equation}
for $C=\psi(\frac{1}{\tau-1})>0$, where $\psi$ is from Definition \ref{def:combloew}. Let $\rho_V:\cU\to [0,\infty)$ be such that $\rho_V \wedge_{\cU} \Gamma_V$ and so that
\begin{equation}
\sum_{U\in \cU} \rho_V(U)^p \leq 2C.
\end{equation}

Let 
\[
\rho(U)=\max\{\rho_V(U)\overline{\rho}(V): V\in \cV\}.
\]
We claim that $\rho \wedge_{\cU} \Gamma$. Let $\gamma\in \Gamma$. Since  $\overline{\rho}\wedge_{\tau, \cV} \Gamma$, there exists a collection $\cV_\gamma$ of $V\in \cV$ with 
\begin{enumerate}
\item $V\cap \gamma \neq \emptyset$ for all $V\in \cV_\gamma$;
\item $\{B(z_V,\tau r_V): V\in \cV_\gamma\}$ is a pairwise disjoint collection of balls; and
\item 
\begin{equation}\label{eq:boundrho}
\sum_{V\in \cV_\gamma} \overline{\rho}(V)\geq 1.
\end{equation}
\end{enumerate}

For each $V\in \cV_\gamma$, let $\gamma|_V$ be a minimal subcurve which connected $B(z_V,r_V)$ to $B(z_V,(\tau-1) r_V)$. Such a subcurve exists since $\diam(\gamma)\geq 2\tau r_V$ and $\gamma \cap B(z_V, r_V) \neq \emptyset$. These subcurves are disjoint and $d(\gamma|_V, \gamma|_{V'})\geq 2\min\{r_V,r_{V'}\}\geq 4r$, for distinct $V,V'\in \cV_\gamma$. Therefore, if we let $\cU_{V}=\{U \in \cU: U\cap \gamma|_V\neq \emptyset\}$ for $V\in \cV_\gamma$, then $\cU_{V}\cap \cU_{V'}=\emptyset$ for distinct $V,V'\in \cV_\gamma$. We also have, since $\rho_V \wedge \Gamma_V$ and $\rho \geq \rho_V \overline{\rho}(V)$ that
\begin{equation}\label{eq:lowersumboundgam}
\sum_{U\in \cU_V}\rho(U) \geq \sum_{U\in \cU, U\cap\gamma|_V \neq \emptyset }\rho_V(U)\overline{\rho}(V) \geq \overline{\rho}(V). 
\end{equation}

Now, let $\cU_\gamma =\{U\in\cU: U\cap \gamma \neq \emptyset\}$. We also have
\[
\bigcup_{V\in \cV_\gamma} \cU_V \subset \cU_\gamma.
\]
By the disjointness of the collections $\cU_V$, for distinct $V\in \cV_\gamma$, and by applying \eqref{eq:boundrho}, \eqref{eq:lowersumboundgam} and the choice of $\rho$, we get
\begin{align*}
\sum_{U\in \cU_\gamma}\rho(U) &\geq \sum_{V\in \cV_\gamma} \sum_{U\in \cU_V}\rho(U) \\
&\geq \sum_{V\in \cV_\gamma} \overline{\rho}(V) \geq 1.
\end{align*}
Thus, since $\gamma$ is arbitrary, $\rho \wedge_{\cU} \Gamma$.

Next, we show a mass-bound for $\rho$.  For each $U\in \cU$ let $V_U\in \cV$ be such that $\rho(U)=\rho_{V_U}(U)\overline{\rho}(V_U)$. This yields a partition of $\cU$ into sets $\cU^V=\{U \in \cU : V_U=V\}.$ Thus, we have, since $\cU^V\subset \cU$
 
\begin{align*}
\Mod_p(\Gamma,\cU) \leq \sum_{U\in \cU}\rho(U)^p &= \sum_{V\in \cV} \sum_{U\in \cU^V}\rho_V(U)^p\overline{\rho}(V)^p\\
&\leq \sum_{V\in \cV} \overline{\rho}(V)^p \sum_{U\in \cU}\rho_V(U)^p \\
&\leq \sum_{V\in \cV} 2C \overline{\rho}(V)^p = 2C \sum_{V\in \cV} \overline{\rho}(V)^p.
\end{align*}
By infimizing over $\overline{\rho}$ such that $\overline{\rho}\wedge_{\tau, \cV} \Gamma$ the claim follows.
\end{proof}

We obtain the following proposition, which gives a lower bound for the Hausdorff measure of a combinatorially Loewner space. In this way, this generalizes to combinatorially Loewner spaces the classical estimate of Heinonen and Koskela, \cite[Theorem 3.6]{heinonenkoskela}. That result is much easier to show using continuous modulus. For discrete modulus one needs to do some extra work.

\begin{proposition}\label{prop:Hausdimquant}Let $X$ be a $p$-combinatorially Loewner LLC and metrically doubling space. Then, there exists a constant $C\geq 1$ so that for every $r\in (0,\diam(X))$ and any $x\in X$ we have
\[
\cH^p(B(x,r))\geq C r^p.
\]
\end{proposition}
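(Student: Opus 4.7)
The strategy is to chain the three tools built earlier in the paper: the combinatorial Loewner lower bound for $\Mod_{p,\cU_k}$, the comparison $\Mod_{p,\cU_k}\lesssim\overline{\Mod}_{p,\tau}$ from Proposition \ref{prop:discretemoduluscombloew}, and the Hausdorff-content upper bound on $\overline{\Mod}_{p,\tau}$ from Proposition \ref{prop:modulusboundhausdorff}. Pick a large parameter $M\geq 2$ (to be fixed) and use LLC, together with the uniform perfectness it implies, to find two continua $E,F\subset\overline{B(x,r/(2M))}$ with $\diam(E),\diam(F)\geq c_1 r/(2M)$, $d(E,F)\geq c_2 r/(2M)$, and $\Delta(E,F)\leq C_1$, where $c_1,c_2,C_1$ depend only on the LLC constant. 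Definition \ref{def:combloew}(1) then yields $\Mod_{p,\cU_k}(\Gamma(E,F))\geq\phi(1/C_1)=:c_0$ for every $k$ with $2^{-k}\leq c_1 r/(2M)$.

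\noindent\textbf{Execution.} Split $\Gamma(E,F)=\Gamma'\cup\Gamma''$, where $\Gamma'$ consists of the curves contained in $\overline{B(x,r/2)}$ and $\Gamma''$ of those exiting. By majorization (Lemma \ref{lem:modulus}) together with Definition \ref{def:combloew}(2),
\[
\Mod_{p,\cU_k}(\Gamma'')\leq\Mod_{p,\cU_k}\!\bigl(\Gamma(\overline{B(x,r/(2M))},X\setminus B(x,r/2))\bigr)\leq\psi(1/(M-1)).
\]
Since $\lim_{t\to 0}\psi(t)=0$, choose $M$ so large that $\psi(1/(M-1))\leq c_0/2$; sub-additivity then gives $\Mod_{p,\cU_k}(\Gamma')\geq c_0/2$, and every $\gamma\in\Gamma'$ lies in $B(x,r)$ with $\diam(\gamma)\geq d(E,F)\geq c_2 r/(2M)$. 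Now fix $\epsilon>0$ and $\delta=c_2 r/(4\tau M)$; Proposition \ref{prop:modulusboundhausdorff} produces a $1$-round cover $\cV$ with $\sup_V r_V\leq\delta$ satisfying
\[
\overline{\Mod}_{p,\tau}(\Gamma',\cV)\leq\frac{(20\tau)^p\cH^p_\delta(B(x,r))+\epsilon}{(c_2 r/(2M))^p}.
\]
Since $\cV$ is finite, $r_{\min}:=\min_V r_V>0$; pick $k$ so large that $2^{-k}\leq\min(r_{\min}/2,\,c_1 r/(2M))$. Then Proposition \ref{prop:discretemoduluscombloew} applies with $\cU=\cU_k$ and this $\cV$, giving $\Mod_{p,\cU_k}(\Gamma')\leq C_*\overline{\Mod}_{p,\tau}(\Gamma',\cV)$. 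Combining the two displays and sending $\epsilon\to 0$ produces $\cH^p_\delta(B(x,r))\geq Cr^p$ for a universal constant $C$, hence $\cH^p(B(x,r))\geq\cH^p_\delta(B(x,r))\geq Cr^p$. The range of $r$ close to $\diam(X)$ is handled by applying the inequality at a smaller scale, at the cost of adjusting the constant.

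\noindent\textbf{Main obstacle.} The principal technical issue is the scale mismatch between the cover $\cV$ produced by Proposition \ref{prop:modulusboundhausdorff}, whose minimal radius is uncontrolled, and the $\kappa$-approximation $\cU_k$ required by Proposition \ref{prop:discretemoduluscombloew}, which demands $\inf r_V\geq 2\cdot 2^{-k}$. The resolution is to fix $\cV$ first and only then pick $k$ sufficiently large, while at the same time keeping $2^{-k}$ small enough for the initial CLP lower bound of Step 1 to remain valid. A secondary technical point is producing the continua $E,F$ at scale $r/(2M)$ using only the LLC and doubling hypotheses, but this is a standard consequence of the uniform perfectness implied by LLC together with connectivity.
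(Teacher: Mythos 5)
Your proposal is correct and follows essentially the same route as the paper: lower bound from Definition \ref{def:combloew}(1), splitting off the exiting curves and controlling them via Definition \ref{def:combloew}(2) and sub-additivity, the Hausdorff-content upper bound from Proposition \ref{prop:modulusboundhausdorff}, and the comparison of moduli in Proposition \ref{prop:discretemoduluscombloew}, with the scale ordering ($\cV$ first, then $k$) handled exactly as the paper does. The only (cosmetic) difference is that you place $E,F$ at the shrunken scale $r/(2M)$ to land directly on $\cH^p(B(x,r))$, whereas the paper works at scale $r$, bounds $\cH^p(B(x,Lr))$, and re-scales afterwards.
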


\begin{proof}
Let $x\in X$. It is sufficient to prove 
\begin{equation}\label{eq:desiredbound}
\cH^p(B(x,2L'r))\geq C r^p.
\end{equation}
for some uniform constants $L'\geq 1,C>0$ for all $ r\in (0,\diam(X)/8)$. Since $X$ is LLC, we can find a continuum $E\subset B(x,r)$ with $r\geq \diam(E)\geq r/2$ and with $x\in E$. Further, there exists a continuum $F \subset \overline{B(x,4r)}\setminus B(x,3r)$ with $8r\geq \diam(F)\geq r$. We have 
\[
1\leq \Delta(E,F)\leq 16.
\]
 Let $\Gamma$ be the collection of continuous curves connecting $E$ to $F$. 

Next, our strategy in proving \eqref{eq:desiredbound} is to show three estimates. We will show that.

\begin{enumerate}
\item[\textbf{A)}] There is a collection $\Gamma_B$ so that for any $\kappa$-approximation $\cU$ at a small enough level the quantity $\Mod_{p,\cU}(\Gamma \setminus \Gamma_B)$ can be bounded from below by using the CLP property, and each curve in $\Gamma \setminus \Gamma_B$ is contained in a ball. 

\item[\textbf{B)}] Proposition \ref{prop:modulusboundhausdorff} gives a lower bound for the Hausdorff measure in terms of the discrete modulus $\overline{\Mod}_{p,\tau}(\Gamma \setminus \Gamma_B,\cV)$. 

\item[\textbf{C)}] Finally, Proposition
 \ref{prop:discretemoduluscombloew} is used  to find a collection $\cU$ for which we can bound $\overline{\Mod}_{p,\tau}(\Gamma_X\setminus \Gamma_B,\cV)$ from below by $\Mod_{p,\cU}(\Gamma_X \setminus \Gamma_B)$ for some $\kappa$-approximation $\cU$ at a small enough level. These estimates together yield the desired bound.
\end{enumerate}

We focus on \textbf{A)} first and determine $\Gamma_B$. Let $\cU$ be a $\kappa$-approximation at level $2^{-k}$ for some $k\in \N$ s.t. $2^{-k}\leq \min\{\diam(E),\diam(F)\}$.
We have
\[
\Mod_{p,\cU}(\Gamma_X) \geq \phi(16^{-1}).
\]
Let $L\geq 2$ be such that $\psi(2L^{-1})\leq 2^{-1} \phi(16^{-1})$. Let $\Gamma_B$ be the collection of curves $\gamma\in \Gamma_X$ with a subcurve in $\Gamma(\overline{B(x,r)}, X\setminus B(x,L r))$. We have, since $X$ is CLP, that 
\[
\Mod_{p,\cU}(\Gamma_B) \leq \Mod_{p,\cU}(\Gamma(\overline{B(x,r)}, X\setminus B(x,Lr))) \leq \psi(2L^{-1})\leq \frac{\phi(16^{-1})}{2}.
\]
Thus, by subadditivity of modulus, we get for $\Gamma_{G}:=\Gamma_X\setminus \Gamma_B$ the estimate
\begin{equation}\label{eq:A)}
\Mod_{p,\cU}(\Gamma_{G}) \geq \frac{\phi(16^{-1})}{2}.
\end{equation}

Next, we deduce \textbf{B)} in our strategy. Let $\tau \geq 4$. Choose $\delta\in(0,4^{-1}\tau^{-1}r)$. Each of the curves in $\Gamma_{G}$ has diameter at least $r$ and is contained in $B(x,Lr)$. So, we can apply Proposition \ref{prop:modulusboundhausdorff} to find for any $\epsilon>0$ a $1$-round collection $\cV$ of balls which intersect $B(x,Lr)$ and some curve in $\Gamma_{G}$ with $\rad(\cV)\leq \delta$ and with 
\begin{equation}\label{eq:B)}
\overline{\Mod}_{p,\tau}(\Gamma_{G}, \cV)\leq (20\tau)^p \left(\cH^p_\delta(B(e,Lr))+\epsilon\right)r^{-p}\leq (20\tau)^p \left(\cH^p(B(e,Lr))+\epsilon\right)r^{-p}.
\end{equation}

Finally, we deduce \textbf{C)}. Each curve in $\Gamma_{G}$ connects $E$ to $F$, and thus we have $\diam(\gamma)\geq r$ for all $\gamma\in\Gamma_G$.
This means that $2\tau \sup_{V\in \cV} r_{V} \leq \inf_{\gamma\in \Gamma_{G}}\diam(\gamma)$.  Thus, by Proposition \ref{prop:discretemoduluscombloew}, there exists a constant $C$ so that
\begin{equation}\label{eq:D)}
\Mod_{p,\cU}(\Gamma_{G}) \leq C \overline{\Mod}_{p,\tau}(\Gamma_{X,G}, \cV),
\end{equation}
if $k$ is so large so that $\inf\{r_{V} : V\in \cV\} \geq 2^{-k-1}$. 

By combining Estimates \textbf{A-C)}, we get the following once $k$ is large enough
\begin{align*}
\frac{\phi(\eta(16)^{-1})}{2} & \stackrel{\eqref{eq:A)}}{\leq} \Mod_{p,\cU}(\Gamma_{G}) \\
&\stackrel{\eqref{eq:D)}}{\leq} C \overline{\Mod}_{p,\tau}(\Gamma_{G}, \cV) \\
&\stackrel{\eqref{eq:B)}}{\leq} (20\tau)^p C (\cH^p(B(x,Lr))+\epsilon)r^{-p}.
\end{align*}
Consequently, since this holds for all $\epsilon>0$, we get
\[
\frac{\phi(\eta(16)^{-1})}{2  (20\tau)^p C}r^p\leq \cH^p(B(x,L'r)).
\]
This yields the desired estimate \eqref{eq:desiredbound}.

\end{proof}

\subsection{Proof of Theorem \ref{thm:mainthmcombloew}}

Using the previous properties, we are able to prove the equality of different forms of conformal dimension for CLP spaces.

\begin{proof}[Proof of Theorem \ref{thm:mainthmcombloew}]
Assume that $X$ is combinatorially $p$-Loewner.
By Lemma \ref{lem:confdimLoew}, we have
\[
\dims_{CA}(X)=\dims_{CAR}(X)=p.
\]
We also have $\dims_{CH}(X)\leq \dims_{CAR}(X)=p$. 
Thus, we only need to show that $\dims_{CH}(X)\geq p$. Let $f:X\to Y$ be a quasisymmetry. The space $Y$ is $p$-combinatorially Loewner, since the combinatorial Loewner property is invariant under quasisymmetries, see \cite[Theorem 2.6 (2)]{bourdonkleiner}. It is also easy to see, that the LLC and metric doubling properties are invariant under quasisymmetries, and thus $Y$ is LLC and metric doubling. Then, by Proposition \ref{prop:Hausdimquant} there exists a constant $C$ so that we have for every $y\in Y$ and any $r\in (0,\diam(Y))$ that
\[
\cH^p(B(y,r))\geq C r^p >0.
\]
From the definition of Hausdorff dimension, and since $\cH^p(B(y,r))>0$ if and only if $\cH^p_\infty(B(y,r))>0$, we have $\dims_H(Y)\geq p$. Consequently, by taking an infimum over all $Y$ which are quasisymmetric to $X$, we get $\dims_{CH}(X)\geq p$.
\end{proof}

\section{Quasiself-similar Spaces}\label{sec:quasiselfsim}

\subsection{Uniform bound for Annuli}

Define an annulus as $A(x,r,R):=B(x,R)\setminus \overline{B(x,r)}$. 

\begin{definition}\label{def:smallannuli}Let $p\in (1,\infty)$. Let $\tau\geq 4$. We say that a metric space $X$ has uniformly small $p$-moduli of annuli, if there exists $\epsilon\in (0,1)$ and constants $0<\delta_{-}<\delta_+ < \tau^{-1}$, so that the following holds. For every annulus $A(x,r,(\tau-2)r)$ in $X$, with $x\in X, r \in (0,2^{-1}\tau^{-1}\diam(X))$, there exists a finite collection of balls $\cV_{x,r}$ contained in $B(x,\tau r)$ and which intersect $B(x,(\tau-2)r)$, with $r_V \in [\delta_- r, \delta_+ r]$ for each $V\in \cV_{x,r}$, and there exists a function $\rho_{x,r}:\cV_{x,r}\to[0,\infty)$ with 
\[
\rho_{x,r} \overline{\wedge}_{\tau,\cV_{x,r}} \Gamma(\overline{B(x,r)},X \setminus B(x,(\tau-2)r))
\] and with
\[
\sum_{B\in \cV_B} \rho_{x,r}(B)^p \leq \epsilon.
\]
\end{definition}

The following Lemma is a refinement of Proposition \ref{prop:modulusboundhausdorff} to the quasiself-similar setting. 

\begin{lemma}\label{lem:Hausdorff} Suppose that $\dims_{CH}(X)<p$, that $p\in (1,\infty)$, and that $X$ is an arcwise connected quasiself-similar compact metric space. Then $X$ has uniformly small $p$-moduli of annuli.
\end{lemma}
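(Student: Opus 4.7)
The plan is to transfer the problem to a space $Y$ quasisymmetric to $X$ with small Hausdorff measure, apply Proposition \ref{prop:modulusboundhausdorff} there, and pull back to $X$; quasiself-similarity is the key ingredient making the pull-back estimates uniform in the base point and scale. Fix a quasisymmetry $f:X\to Y$ with $\dims_H(Y)<p$, so that $\cH^p(Y)=0$. Given an annulus $A(x,r,(\tau-2)r)$ with $r\in (0,\diam(X)/(2\tau))$, apply Definition \ref{def:appxquasi} to $B(x,\tau r)$ to obtain an $\eta_1$-quasisymmetry $g_{x,r}:B(x,\tau r)\to V_{x,r}$ with $\diam(V_{x,r})\geq \delta_0 \diam(X)$. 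The composition $F:=f\circ g_{x,r}:B(x,\tau r)\to Y':=f(V_{x,r})$ is then $\eta_F$-quasisymmetric with $\eta_F$ depending only on $\eta$ and $\eta_1$; it is a quasisymmetry between a ball of diameter $\sim r$ and a set of diameter $\sim \diam(Y)$, with distortion function independent of $x$ and $r$.

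Let $\tilde\Gamma$ consist of the subcurves of $\Gamma:=\Gamma(\overline{B(x,r)},X\setminus B(x,(\tau-2)r))$ lying in $\overline{B(x,(\tau-2)r)}$; every curve in $\Gamma$ contains such a subcurve, so any $\rho$ strongly admissible for $\tilde\Gamma$ is automatically strongly admissible for $\Gamma$, and it suffices to bound the strong modulus on $\tilde\Gamma$. Two applications of Lemmas \ref{lem:relativedist} and \ref{lem:ballinclusion}, first to $g_{x,r}$ and then to $f$, yield universal constants $C_1, c_2, c_3 > 0$ depending only on $\eta,\eta_1,\delta_0,\tau$ such that $F(\tilde\Gamma)$ lies in a ball $B_Y\subset Y$ with $\diam(B_Y)\leq C_1\diam(Y)$, each $F(\tilde\gamma)$ has diameter at least $c_2\diam(Y)$, and $\dist(F(\tilde\Gamma),Y\setminus Y')\geq c_3\diam(Y)$. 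Setting $\tau':=\max\{4,\eta_F(\tau)\}$ and choosing $\delta''<c_3\diam(Y)$ small, Proposition \ref{prop:modulusboundhausdorff} provides a $1$-round collection $\mathcal{W}$ of balls contained in $Y'$ with $\sup_{W\in\mathcal{W}} r_W\leq \delta''$ and
\[
\overline{\Mod}_{p,\tau'}(F(\tilde\Gamma),\mathcal{W})\leq \frac{(20\tau')^p\cH^p_{\delta''}(Y)+\epsilon''}{(c_2\diam Y)^p}.
\]
Since $\cH^p(Y)=0$, this can be made smaller than any prescribed $\epsilon<1$ by first shrinking $\delta''$ and then $\epsilon''$, with the required smallness depending only on universal data.

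Applying Lemma \ref{lem:quasiinv} to $F$ and Lemma \ref{lem:kapparoundinvariance} to $F^{-1}$, we obtain
\[
\overline{\Mod}_{p,\tau}(\tilde\Gamma,F^{-1}(\mathcal{W}))\leq \overline{\Mod}_{p,\tau'}(F(\tilde\Gamma),\mathcal{W})<\epsilon,
\]
where $F^{-1}(\mathcal{W})$ is a $\kappa'$-round collection in $B(x,\tau r)$. Replace each $\kappa'$-round set by its enclosing ball to obtain a collection $\cV_{x,r}$ of genuine balls in $B(x,\tau r)$, with $\rho_{x,r}$ the pull-back of the optimal $\rho$. Lemma \ref{lem:ballinclusion} applied to $F^{-1}$ gives $r_{F^{-1}(W)}\leq \theta(\delta''/\diam Y)\cdot r$ with $\theta(t)\to 0$ as $t\to 0$, so the upper bound $\delta_+<\tau^{-1}$ follows by choosing $\delta''$ small. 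The principal technical obstacle is the matching lower bound $r_{F^{-1}(W)}\geq \delta_- r$: this requires restricting $\mathcal{W}$ to balls with $r_W\geq \delta_-^Y$ for some fixed $\delta_-^Y>0$, which is arranged by discarding smaller balls and redistributing their $\rho$-mass via a Vitali-type covering compatible with the strong admissibility condition, then invoking the companion quasisymmetric lower estimate. The whole uniformity---in both ball sizes and the modulus bound---rests on quasiself-similarity: without it, $f$'s distortion would vary with the base point $x$, and the constants $\delta_\pm,\epsilon$ could not be chosen independently of $x$ and $r$.
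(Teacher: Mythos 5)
Your overall strategy is the same as the paper's: map the annulus into $Y$ (where the $p$-Hausdorff content is small) via quasiself-similarity plus the quasisymmetry realizing $\dims_H(Y)<p$, cover there, and pull back. Where you part ways with the paper is in \emph{how} the covering in $Y$ is produced, and this is exactly where a genuine gap opens up.

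You invoke Proposition~\ref{prop:modulusboundhausdorff} applied to $F(\tilde\Gamma)$ inside $Y'=F(B(x,\tau r))$. That proposition supplies \emph{some} cover $\mathcal{W}$ depending on the curve family (hence on $(x,r)$), with an upper bound $r_W\le\delta''$ but no control from below. You yourself flag the lower bound $r_{F^{-1}(W)}\ge\delta_- r$ as the principal obstacle, but the proposed repair --- ``discarding smaller balls and redistributing their $\rho$-mass via a Vitali-type covering compatible with the strong admissibility condition, then invoking the companion quasisymmetric lower estimate'' --- is not an argument. Discarding balls destroys the covering of $F(\tilde\gamma)$ that strong admissibility relies on; replacing a small ball by a larger one at the same center can inflate the $p$-energy uncontrollably (since the energy is a sum of $p$-th powers, enlarging a ball to a fixed size contributes a fixed positive amount independent of its original small $\rho$-weight, and the number of such balls is unbounded); and there is no ``companion quasisymmetric lower estimate'' in the paper that does this for you. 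As written, $\delta_-$ is not uniform over $x$ and $r$, so the conclusion ``$X$ has uniformly small $p$-moduli of annuli'' does not follow.

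The paper's way around this is a small but decisive structural choice that your proposal misses: it fixes \emph{one} finite cover $\cB_Y$ of $Y$ up front (possible because $\cH^p(Y)=0$ and $Y$ is compact), and then for every annulus pulls back the \emph{same} $\cB_Y$ through $(g\circ f_{x,r})^{-1}$. Finiteness of $\cB_Y$ gives automatically a positive minimum radius $\beta\diam(Y)$, and uniformity of the pullback bounds $r_{V_B}\in[\delta_-r,\delta_+r]$ then follows from the uniform distortion functions (quasiself-similarity plus a single fixed $g$), not from any post-hoc pruning. If you restructure your argument to fix $\cB_Y$ once (rather than generating a fresh $\mathcal{W}$ per annulus from Proposition~\ref{prop:modulusboundhausdorff}) and carry out the pullback and admissibility estimates directly, the rest of your sketch --- the composition $F=f\circ g_{x,r}$, the use of Lemmas~\ref{lem:relativedist} and~\ref{lem:ballinclusion} for the geometric bounds, the diameter lower bound $c_2\diam(Y)$ for $F(\tilde\gamma)$, and the passage from $\kappa'$-round sets to enclosing balls --- lines up with what the paper actually does.

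One smaller caveat: Lemma~\ref{lem:quasiinv} compares $\overline{\Mod}_{p,\tau}$ and $\overline{\Mod}_{p,\max\{4,\eta(\tau)\}}$, not $\overline{\Mod}_{p,\tau}$ and $\overline{\Mod}_{p,\tau'}$ with $\tau'$ on the \emph{image} side; applying it to $F^{-1}$ (as you suggest) rather than $F$ requires unwinding which direction enlarges $\tau$, and this should be spelled out so the disjointness condition in Definition~\ref{def:stronglyadmissible} is genuinely verified in $X$.
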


\begin{proof} Assume that $X$ is $\eta$-quasiself-similar and let $\tau \geq 4$. Fix any $\delta_+\in (0,\tau^{-1})$. Since $\dims_{CH}(X)<p$, there exists a compact space $Y$ with $\dims_H(Y)<p$ and a quasisymmetry $g:X\to Y$. Fix $C\geq 1,\sigma\in (0,2^{-1})$ to be determined. By adjusting $\eta$, we may assume that $g$ is an $\eta$-quasisymmetry. Let $\epsilon>0$, and choose a covering of $Y$ by a collection of balls $\cB_Y$ with
\[
\sum_{B\in \cB_Y} \diam(B)^p \leq \epsilon C^{-p}\diam(Y)^p,
\]
and for which $\rad(B)\leq \sigma \diam(Y)$ for every $B\in \cB_Y$. Let $A(x,r,(\tau-2)r)$ be an annulus in $X$ with $x\in X$ and $r \in (0,2^{-1}\tau^{-1}\diam(X))$.  There is a homeomorphism $f:B(x,2\tau r)\to U$, for some open set $U\subset X$, which is an $\eta$-quasisymmetry, where $\diam(U)\geq \delta \diam(X)$. 

We first define the collection $\cV_{x,r}$ used in Definition \ref{def:smallannuli}.
For each $B=B(y,s)\in \cB_Y$ with 
\[
B\cap g(f(B(x,(\tau-2)r)))\neq \emptyset,
\] choose $x_{V_B}\in (g\circ f)^{-1}(B)\cap B(x,(\tau-2)r)$, and let $r_{V_B}=\sup\{d(x,x_B): x\in (g\circ f)^{-1}(2B)\cap B(x,\tau r)\}$. Define $V_B:=B(x_{V_B},r_{V_B})$. 
Let 
\[
\cV_{x,r}:=\{V_B : B\in \cB_Y, B \cap g(f(B(x,(\tau-2)r))) \neq \emptyset\}
\]
be the collection of balls we seek. Next, we give bounds for $r_{V_B}$ by using the fact that $X$ is connected and that $g\circ f$ is a $\tilde{\eta}$-quasisymmetry with $\tilde{\eta}=\eta\circ \eta$. 

Since $\diam(U)\geq \delta \diam(X)$, we can choose $a,b\in U$ with $d(a,b)\geq 2^{-1}\delta \diam(X)$. Choose a point $c\in X$ so that $d(g(c),g(a))\geq \diam(Y)2^{-1}$. Since $g$ is an $\eta$-quasisymmetry, we have 
\[
\frac{d(g(a),g(c))}{d(g(a),g(b))}\leq \eta\left(\frac{d(c,a)}{d(b,a)}\right)\leq \eta(2\delta^{-1}).
\]
Thus, 
\begin{equation}\label{eq:gagbbound}
d(g(a),g(b))\geq \eta(2\delta^{-1})^{-1}2^{-1}\diam(Y).
\end{equation}

We will use \eqref{eq:gagbbound} to give an upper bound for $r_{V_B}$ for each $V_B\in \cV_{x,r}$, where $B\in \cB_Y$. Let $u,v\in B(x,2\tau r)$ be such that $f(u)=a,f(v)=b$. Choose $s,t\in  (g\circ f)^{-1}(2B)\cap B(x,\tau r)$ so that $d(s,t)\geq r_{V_B}/2$. Up to possibly switching $u$ and $v$, and $a,b$, we can assume by \eqref{eq:gagbbound} that  
\begin{equation}\label{eq:gsgabound}
d(g(f(s)),g(a))\geq \frac{d(g(a),g(b))}{2}\geq \eta(2\delta^{-1})^{-1}\diam(Y)2^{-2}.
\end{equation}

We have
\begin{equation}\label{eq:eqsust}
\frac{d(g(f(s)),g(f(u)))}{d(g(f(s)),g(f(t)))}\leq \tilde{\eta}\left(\frac{d(s,u)}{d(s,t)}\right).
\end{equation}
Since $g(f(s)),g(f(t))\in 2B$, we get $d(g(f(s)),g(f(t)))\leq 4\rad(B)\leq 4\sigma \diam(Y)$. Thus, from \eqref{eq:gsgabound}, we get
\[
\frac{1}{2^4 \eta(2\delta^{-1})\sigma}\leq \frac{\diam(Y)}{2^4\rad(B)\eta(2\delta^{-1})}\leq \frac{d(g(f(s)),g(a))}{d(g(f(s)),g(f(t)))}.
\]
By combining this with \eqref{eq:eqsust}, we deduce 
\[
\tilde{\eta}^{-1}\left(\frac{1}{2^4\sigma \eta(2\delta^{-1})}\right)\leq \frac{d(s,u)}{d(s,t)} \leq \frac{4\tau r}{r_{V_B}}.
\]
Thus,
\[
r_{V_B} \leq  \frac{4\tau}{\tilde{\eta}^{-1}\left(\frac{1}{2^4\sigma \eta(2\delta^{-1})}\right)} r.
\]
Choose now $\sigma \leq \tilde{\eta}(\frac{4\tau}{\delta_+})^{-1}\eta(2\delta^{-1})^{-1}2^{-4}$. We then have, $r_{V_B}\leq \delta_+ r$. Since $\delta_+<1$, we also have $r_{V_B}\leq r$ and since $x_{V_B}\in B(x,(\tau-2)r)$ we clearly have $V_B\subset B(x,\tau r)$. 

Next, we give a uniform lower bound for the radii $r_{V_B}$ for $V_B\in \cV_{x,r}$ . Since $\cB_Y$ is finite, there exists a constant $\beta>0$ so that $\rad(B)\geq \beta \diam(Y)$ for all $B\in \cB_Y$.  Choose $\delta_-=\tilde{\eta}^{-1}(\beta)/2$. Let $c\in B(x_{V_B}, \delta_- r)$ be an arbitrary point. Also, choose $b\in B(x,2\tau r)$ with $d(b,x_{V_B})\geq r$, which is possible by connectivity. Then, by the quasisymmetry condition, we get
\[
\frac{d(g(f(c)), g(f(x_{V_B})))}{d(g(f(b)), g(f(x_{V_B})))}\leq \tilde{\eta}\left(\frac{d(c,x_{V_B})}{d(b,x_{V_B})}\right) \leq \tilde{\eta}\left(\delta_-\right).
\]
The choice of $\delta_-$ guarantees $\tilde{\eta}(\delta_-)\leq \beta$, and thus
\[
d(g(f(c)), g(f(x_{V_B}))) \leq \tilde{\eta}(\delta_-)\diam(Y)\leq r_B.
\]
Therefore, since $g(f(x_{V_B}))\in B$, we get $g(f(c))\in 2B$. This holds for all $c\in B(x_{V_B}, \delta_- r)$, and thus 
\[
g(f(B(x_{V_B}, \delta_- r))) \subset 2B.
\]
This yields, by connectivity and the definition of $r_{V_B}$ that $r_{V_B}\geq \delta_- r$. 

Finally, we define the admissible function $\rho$. Define $\rho(V)=\max\{C \diam_Y(B)\diam_Y(Y)^{-1}: V_{B}=V, B\in \cB_{Y}\}$ for $V\in \cV_{x,r}$.
We have
\begin{equation} \label{eq:massbound}
\sum_{V\in \cV_B} \rho(V)^p \leq C^p \sum_{B\in \cB_Y} \diam_Y(B)^p\diam_Y(Y)^{-p} \leq \epsilon,
\end{equation}
since for every $V\in \cV_{x,r}$ there exists at least one $B\in \cB_Y$ so that $V_B=V$, and for every $B\in \cB_Y$ there is only one $V\in\cV_{x,r}$ for which $V_B=V$.

Next, we show that $\rho \overline{\wedge}_{\tau, \cV_{x,r}} \Gamma(\overline{B(x,r)},X\setminus B(x,(\tau-2)r))$. Let $\gamma \in \Gamma(\overline{B(x,r)},X\setminus B(x,(\tau-2)r))$ be arbitrary. Let $\sigma$ be a sub-curve of $\gamma$ so that $\sigma\subset \overline{(\tau-2)B}$ and $\sigma\in \Gamma(\overline{B(x,r)},X\setminus B(x,(\tau-2)r))$. To show admissibility, we will combine the fact that $\cB_Y$ covers $g\circ f\circ \sigma$ with a lower bound for the diameter of $g\circ f\circ \sigma$. 

Since $\sigma$ connects $B(x,r)$ to $X\setminus B(x,(\tau-2)r)$ there exist $j,k\in \sigma$ with $d(j,k)\geq (\tau-3)r$.  Let $a,b$ and $u,v$ be as before. By possibly switching $j$ and $k$, we can assume that $d(j,u)\geq d(j,k)/2 \geq 2^{-1}(\tau-3)r$. We get
\[
\frac{d(g(f(j)),g(f(u)))}{d(g(f(j)),g(f(k))} \leq \tilde{\eta}\left(\frac{d(j,u)}{d(j,k)}\right)\leq \tilde{\eta}\left(\frac{4\tau r}{(\tau-3)r}\right)\leq \tilde{\eta}(16). 
\]
Thus,
\begin{equation}\label{eq:diamsigma-1}
\diam(g\circ f\circ \sigma) \geq d\left((g(f(j)),g(f(k))\right) \geq d\left(g(f(j)),g(f(u))\right)\tilde{\eta}(16)^{-1}.
\end{equation}
Next, $d(j,u)\geq 2^{-1}(\tau-3)r \geq d(u,v)/8$. Thus, by a similar reasoning that uses the quasisymmetry of $g\circ f$ and by employing \eqref{eq:gagbbound}, we get
\begin{equation}\label{eq:diamsigma-2}
d(g(f(j)),g(f(u))) \geq d(g(f(u),g(f(v)))\tilde{\eta}(8)^{-1}\geq \tilde{\eta}(48)^{-1}\eta(2\delta^{-1})^{-1}2^{-1}\diam(Y).
\end{equation}
By combining \eqref{eq:diamsigma-1} and \eqref{eq:diamsigma-2}, we obtain
\begin{equation}\label{eq:diamsigma-3}
\diam(g\circ f\circ \sigma) \geq d(g(f(j)),g(f(k)) \geq \tilde{\eta}(16)^{-1} \tilde{\eta}(8)^{-1}\eta(2\delta^{-1})^{-1}2^{-1}\diam(Y).
\end{equation}

Recall that $\cV_{x,r}$ consists of balls.
The open sets $\cV_{x,r}$ cover the ball $B(x,2(\tau-2))$, and thus the curve $\sigma$. Therefore, by the Vitali covering theorem, there exists a finite collection of balls $\cV_\gamma$ with
$\sigma \subset \bigcup 5\tau \cV_\gamma$, and for which $\tau \cV_\gamma$ are disjoint, and so that each ball in $\cV_\gamma$ intersects $\gamma$.  For each $V\in \cV_\gamma$, choose a ball $B(V)\in \cB_Y$ so that $V=V_{B(V)}$ and $\rho(V)=C\diam_Y(B(V))\diam_Y(Y)^{-1}$. 

First, we note that the quasisymmetry condition  and Lemma \ref{lem:ballinclusion}, we have $g(f(5\tau V)) \subset \tilde{\eta}(10\tau) B(V)$. Therefore, we get that the balls $\tilde{\eta}(10\tau) B(V)$ for $V\in \cV_\gamma$ cover $g(f(\sigma))$. Thus,
\begin{align*}
\sum_{V\in \cV_\gamma} \rho(V) &=\sum_{V\in \cV_\gamma} C\diam_Y(B(V))\diam_Y(Y)^{-1} \\
&\geq \sum_{V\in \cV_\gamma} C(2\tilde{\eta}(10\tau))^{-1}\diam_Y(Y)^{-1}\diam_Y(\tilde{\eta}(10\tau) B(V)) \\
&\stackrel{\eqref{eq:diamsigma-3}}{\geq} \diam(g\circ f\circ \sigma) C(2\tilde{\eta}(10\tau))^{-1}\diam_Y(Y)^{-1} \\
&\geq C 2^{-2}\tilde{\eta}(16)^{-1} \tilde{\eta}(8)^{-1}\eta(2\delta^{-1})^{-1}\tilde{\eta}(10\tau)^{-1}.
\end{align*}

If $C\geq 4\tilde{\eta}(16) \tilde{\eta}(8)\eta(2\delta^{-1})\tilde{\eta}(10\tau)$, then $\rho \overline{\wedge}_{\tau, \cV_{x,r}} \Gamma(\overline{B(x,r)},X\setminus B(x,(\tau-2)r))$ is admissible and the claim follows.

 
\end{proof}

\subsection{Algorithm for pushing down a cover}\label{subsec:algorithm}

The following lemma describes a ``push down'' algorithm. It uses admissible functions for annuli in order to push down a collection of balls $\cB$ and a strongly discretely $\tau$-admissible function $\rho$. This is done by replacing a ball $\mathbf{B}\in \cB$ by a collection $\cB_\mathbf{B}$ and an associated function $\rho_\mathbf{B}$. A new admissible function $\overline{\rho}$ is defined by taking a maximum over $\mathbf{B}\in \cB$, and a new collection by taking a union of all the new balls. This arguments for admissibility and the construction of $\overline{\rho}$ are similar to Proposition \ref{prop:discretemoduluscombloew}. 
To distinguish the ``parent'' balls from the ``descendant balls'', we will bold the parent balls. This replacement algorithm is depicted and explained more in Figure \ref{fig:replacement}. As seen in this figure, we permit all sorts of overlaps, and balls of different sizes. This is one of the technical reasons for using the new modulus from Subsection \ref{subsec:newmod}.

Recall that $\Gamma_{L,B}$ denotes the collection of curves $\gamma$ connecting $B$ to $X\setminus LB$.

\begin{lemma}\label{lem:replaceannulus}Let $\epsilon,\eta \in (0,1)$. Assume that $\cB$ is a finite collection of balls, $\Gamma$ is a collection of curves, $2(\tau-2)\rad(\cB)\leq \inf_{\gamma\in  \Gamma}\diam(\gamma)$ and $\rho \wedge_{\cB} \Gamma$. Suppose further that $\cC\subset \cB$ is any finite collection of balls, and for every  $\mathbf{B}\in \cC$, there exists a finite collection of balls $\cB_\mathbf{B}$ and a function $\rho_\mathbf{B}:\cB_\mathbf{B}\to [0,\infty)$ with 
\begin{enumerate}
\item $\rad(\cB_\mathbf{B})\leq \tau^{-1}\rad(\mathbf{B})$,
\item  $\rho_\mathbf{B}\overline{\wedge}_{\tau,\cB_\mathbf{B}} \Gamma_{\mathbf{B}, (\tau-2)}$, 
\item every ball in $\cB_\mathbf{B}$ intersects $(\tau-2)\bfB$, and satisfies
\[
\sum_{B\in \cB_\mathbf{B}} \rho_\mathbf{B}(B)^p \leq \eta.
\] 
\end{enumerate}
For $\mathbf{B}\not\in \cC$ assume that $\cB_\mathbf{B}=\{\mathbf{B}\}$ and $\rho_\mathbf{B}(\mathbf{B})=1$.

For the collection $\overline{\cB} := \bigcup_{\mathbf{B}\in \cB} \cB_\mathbf{B}$, and function
\[
\overline{\rho}(B):=\max\{\rho(\mathbf{B})\rho_\mathbf{B}(B): \bfB\in \cB \ \text{s.t.} \ B\in \cB_\mathbf{B}\},
\] we have $\overline{\rho} \overline{\wedge}_{\tau,\overline{\cB}} \Gamma$ and
\[
\sum_{B\in \overline{\cB}} \overline{\rho}(B)^p \leq \sum_{\mathbf{B}\in \cC} \eta \rho(\mathbf{B})^p + \sum_{\mathbf{B}\in \cB \setminus \cC} \rho(\mathbf{B})^p .
\]
\end{lemma}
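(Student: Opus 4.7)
The plan is to establish strong admissibility of $\overline{\rho}$ and the energy bound separately, mirroring the replacement argument used in Proposition \ref{prop:discretemoduluscombloew}. I will read the hypothesis $\rho\wedge_\cB\Gamma$ as strong admissibility $\rho\,\overline{\wedge}_{\tau,\cB}\,\Gamma$, which is the reading consistent with the conclusion and with the fact that the lemma is intended to be iterated.

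For the admissibility claim, fix $\gamma\in\Gamma$. Strong admissibility of $\rho$ produces a subcollection $\cB_\gamma\subset\cB$ with pairwise disjoint $\{\tau\mathbf{B}:\mathbf{B}\in\cB_\gamma\}$, each $\mathbf{B}$ meeting $\gamma$, and $\sum_{\mathbf{B}\in\cB_\gamma}\rho(\mathbf{B})\geq 1$. The witness for $\gamma$ will be $\overline{\cB}_\gamma:=\bigcup_{\mathbf{B}\in\cB_\gamma}\cB_{\mathbf{B}}^\gamma$, where I set $\cB_{\mathbf{B}}^\gamma=\{\mathbf{B}\}$ when $\mathbf{B}\notin\cC$, and for $\mathbf{B}\in\cC$ I obtain $\cB_{\mathbf{B}}^\gamma\subset\cB_\mathbf{B}$ by applying strong admissibility of $\rho_\mathbf{B}$ to a subcurve $\gamma|_\mathbf{B}\in\Gamma_{\mathbf{B},(\tau-2)}$ of $\gamma$. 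Producing the subcurve uses the diameter hypothesis: if $\gamma\subset(\tau-2)\mathbf{B}$, compactness of $\gamma$ and the triangle inequality force $\diam(\gamma)<2(\tau-2)\rad(\mathbf{B})\leq 2(\tau-2)\rad(\cB)$, contradicting the assumption.

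The key geometric observation is the containment $\tau B\subset\tau\mathbf{B}$ for every $B\in\cB_\mathbf{B}$. Combining $r_B\leq\tau^{-1}\rad(\mathbf{B})$ with the fact that $B$ meets $(\tau-2)\mathbf{B}$ yields, for any $w\in\tau B$, the estimate $d(w,z_\mathbf{B})\leq(\tau+1)r_B+(\tau-2)\rad(\mathbf{B})\leq(\tau-1+\tau^{-1})\rad(\mathbf{B})<\tau\rad(\mathbf{B})$. Using this, I would verify the three properties of strong $\tau$-admissibility of $\overline{\rho}$: pairwise disjointness of $\{\tau B:B\in\overline{\cB}_\gamma\}$ follows from internal disjointness within each $\cB_{\mathbf{B}}^\gamma$ combined with $\tau\mathbf{B}\cap\tau\mathbf{B}'=\emptyset$ for distinct $\mathbf{B},\mathbf{B}'\in\cB_\gamma$; the intersection property is automatic since $B$ meets either $\gamma|_\mathbf{B}$ or $\mathbf{B}$; and
\[
\sum_{B\in\overline{\cB}_\gamma}\overline{\rho}(B)\geq\sum_{\mathbf{B}\in\cB_\gamma}\sum_{B\in\cB_{\mathbf{B}}^\gamma}\rho(\mathbf{B})\rho_\mathbf{B}(B)\geq\sum_{\mathbf{B}\in\cB_\gamma}\rho(\mathbf{B})\geq 1
\]
follows from $\overline{\rho}(B)\geq\rho(\mathbf{B})\rho_\mathbf{B}(B)$ and the absence of double-counting in the double sum, which is again guaranteed by the containment above.

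For the energy bound, I would pick for each $B\in\overline{\cB}$ a single parent $\mathbf{B}^*(B)\in\cB$ realizing the max in the definition of $\overline{\rho}(B)$, and set $\cB^{\mathbf{B}}:=\{B\in\overline{\cB}:\mathbf{B}^*(B)=\mathbf{B}\}\subset\cB_\mathbf{B}$. These sets partition $\overline{\cB}$, which gives
\[
\sum_{B\in\overline{\cB}}\overline{\rho}(B)^p=\sum_{\mathbf{B}\in\cB}\rho(\mathbf{B})^p\sum_{B\in\cB^{\mathbf{B}}}\rho_\mathbf{B}(B)^p\leq\sum_{\mathbf{B}\in\cC}\eta\rho(\mathbf{B})^p+\sum_{\mathbf{B}\in\cB\setminus\cC}\rho(\mathbf{B})^p,
\]
after using hypothesis (3) for $\mathbf{B}\in\cC$ and $\cB_\mathbf{B}=\{\mathbf{B}\}$, $\rho_\mathbf{B}(\mathbf{B})=1$ for $\mathbf{B}\notin\cC$. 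The main obstacle I anticipate is the containment $\tau B\subset\tau\mathbf{B}$, which couples hypothesis (1) on radii with the constraint that each descendant meets $(\tau-2)\mathbf{B}$; without it the cross-parent disjointness needed in the admissibility step would collapse.
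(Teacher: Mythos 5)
Your proof is correct and follows essentially the same route as the paper: pull back strong admissibility of $\rho$ for $\gamma$, produce subcurves in $\Gamma_{\mathbf{B},(\tau-2)}$ via the diameter hypothesis, use $\tau B\subset\tau\mathbf{B}$ (driven by the radius bound and the constraint that descendants meet $(\tau-2)\mathbf{B}$) to propagate the $\tau$-disjointness across parents, and split the energy sum over $\cC$ versus $\cB\setminus\cC$. The only difference is cosmetic: in the energy bound you pick one maximizing parent per descendant to get an exact partition of $\overline{\cB}$, whereas the paper bounds the max by the $\ell^p$-sum over all candidate parents; both yield the same estimate.
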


\begin{figure}[h!]\label{fig:replacement}\includegraphics[width=.3\textwidth]{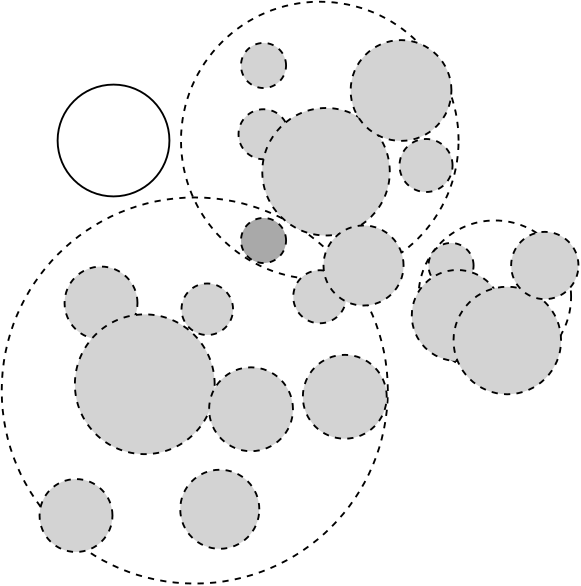}
\caption{The push-down algorithm: As input we are given a collection of balls $\cB$, and a function $\rho$ which is strongly discretely admissible for some collection of curves $\Gamma$. For each ball $B$ in some sub-collection $\cC\subset \cB$ we have a ``replacement collection'' $\cB_\textbf{B}$, and a replacement function $\rho_\textbf{B}$. We replace each ball in $\cC$ by the associated collection $\cB_B$, and keep the remaining balls fixed. A new $\overline{\rho}$ is defined using $\rho_\textbf{B}$ for this new collection, and is shown to be admissible under appropriate assumptions. In the figure, the non-filled balls are the collection $\cB$, and $\cC$ consists of the empty balls with dashed boundaries. The figure shows how the dashed lined empty balls are replaced by the filled balls, where each filled ball comes from a collection $\cB_\textbf{B}$ for some $\textbf{B}\in \cC$. We have not indicated in detail which ball corresponds to which, as this is irrelevant for the proof. Note however that condition (3) in Lemma \ref{lem:replaceannulus}. There is a darker shaded ball, which is in two of the collections $\cB_\textbf{B}$ -- an eventuality that we permit, but that could be avoided by  allowing collections of balls to be multi-sets. Also note, that the balls can intersect each other in unlimited ways, and there is also one solid lined ball in $\cB\setminus \cC$ which is not replaced. }
\end{figure}

\begin{proof}[Proof of Lemma \ref{lem:replaceannulus}] We first show that $\overline{\rho} \stwedge_{\tau,\overline{\cB}} \Gamma$. Let $\gamma\in \Gamma$. Since $\rho \stwedge_{\tau,\cB} \Gamma$, there exists a collection $\cB_\gamma \subset \Gamma$ so that $\tau\cB_\gamma$ is pairwise disjoint, so that $\textbf{B}\cap\gamma\neq \emptyset$ for every $\textbf{B}\in \cB_\gamma$ and
\begin{equation}\label{eq:rhoadm}
\sum_{\textbf{B}\in \cB_\gamma} \rho(\bfB)\geq 1.
\end{equation}

We next define $\overline{\cB}_\gamma\subset \overline{\cB}$. First, set $\cB_\gamma^1 = \cB_\gamma \setminus \cC$. Next, for each $\bfB \in \cB_\gamma \cap \cC$ we have $\rho_\bfB \stwedge_{\tau,\cB_\bfB} \Gamma_{\bfB,(\tau-2)}$. Since $\gamma \cap \bfB\neq \emptyset$, and $\diam(\gamma)\geq 2(\tau-2)\rad(\bfB)$, we have that $\gamma \cap (X\setminus (\tau-2)\bfB)\neq \emptyset$. Thus, $\gamma$ contains a sub-arc in $\Gamma_{\bfB, (\tau-2)}$. Therefore, there exists a collection $\cB_{\gamma,\bfB}\subset \cB_\bfB$ so that $\tau\cB_{\gamma,\bfB}$ is pairwise disjoint, so that $B\cap\gamma\neq \emptyset$ for every $B\in \cB_{\gamma,\bfB}$ and
\[
\sum_{B\in \cB_{\gamma,\bfB}} \rho_\bfB(B)\geq 1.
\]
Since $\overline{\rho}(B)\geq \rho_\bfB(B)\rho(\bfB)$ for every $B\in \cB_{\gamma, \bfB}$, we have
\begin{equation}\label{eq:rhoadmann}
\sum_{B\in \cB_{\gamma,\bfB}} \overline{\rho}(B)\geq \rho(\bfB).
\end{equation}

Set $\cB_\gamma^2 = \bigcup_{\bfB\in \cB_\gamma \cap \cC} \cB_{\gamma,\bfB}$. Finally, let $\overline{\cB}_\gamma=\cB_\gamma^1 \cup \cB_\gamma^2$. Note that for every $\bfB\in \cB_\gamma \cap\cC$, we have $\rad(\tau \cB_{\gamma,\bfB})\leq \tau \rad(\cB_\bfB)\leq \rad(B)$ and $B\cap (\tau-2)\bfB \neq \emptyset$ for every $B\in \cB_{\gamma,\bfB}$. 
Thus, every $B\in \tau \cB_{\gamma,\bfB}$ satisfies $\tau B \subset \tau \bfB$. This inclusion implies that the collections $\cB_{\gamma,\bfB}$ are pairwise disjoint for distinct $\bfB\in \cB_\gamma$, and each of these is disjoint from $\cB_\gamma^2$. Thus, the collection $\tau \overline{\cB}_\gamma$ is pairwise disjoint.

Next, 
\begin{align*}
\sum_{B\in \overline{\cB}_\gamma} \overline{\rho}(B) &= \sum_{B\in \cB_\gamma^1} \overline{\rho}(B) + \sum_{B\in \cB_\gamma^2} \overline{\rho}(B) \\
&\geq \sum_{B\in \cB_\gamma^1} \rho(B) + \sum_{\bfB\in \cB_\gamma \cap\cC}\sum_{B\in \overline{\cB}_{\gamma, \bfB}} \overline{\rho}(B) \\
&\stackrel{\eqref{eq:rhoadmann}}{\geq} \sum_{\bfB\in \cB_\gamma \setminus \cC} \rho(\bfB) + \sum_{\bfB\in \cB_\gamma \cap\cC} \rho(\bfB) \\
&\stackrel{\eqref{eq:rhoadm}}{=} \sum_{\bfB\in \cB_\gamma} \rho(\bfB) \geq 1.
\end{align*}
Thus, since $\gamma$ was arbitrary, we have $\overline{\rho}\overline{\wedge}_{\tau, \overline{\cB}} \Gamma.$

Finally, we compute the $p$-energy of $\overline{\rho}$. 
First, by construction, for every $\bfB\in \cC$, we have
\begin{equation}\label{eq:tilderhosum}
\sum_{B \in \cB_\bfB} \rho_\bfB(B)^p\leq \eta,
\end{equation}
and for every $\bfB \in \cB \setminus \cC$, we have
\begin{equation}\label{eq:tilderhosum2}
\sum_{B \in \cB_\bfB} \rho_\bfB(B)^p=\rho_\bfB(\bfB)=1,
\end{equation}
For every $B\in \overline{\cB}$, there may be multiple $\bfB\in \cB$ so that $B\in \cB_\bfB$. However, for every $B\in \overline{\cB}$, we have:
\begin{equation}\label{eq:upperboundtildeB}
\overline{\rho}(B) = \max\{\rho(\bfB)\rho_\bfB(B) : \bfB\in \cB \ \text{s.t.} \ B \in \cB_\bfB\} \leq \left(\sum_{\substack{\bfB\in \cB \\ \text{s.t.} \ B\in \cB_\bfB}} (\rho(\bfB)\rho_\bfB(B))^p\right)^{\frac{1}{p}}.
\end{equation}
By combining these two we get:
\begin{align*}
\sum_{B\in \overline{\cB}} \overline{\rho}(B)^p &\stackrel{\eqref{eq:upperboundtildeB}}{\leq} \sum_{\bfB\in \cB} \sum_{B\in \cB_\bfB} \rho(\bfB)^p\rho_\bfB(B)^p  \\
&=\sum_{\bfB\in \cC} \sum_{B\in \cB_\bfB} \rho(\bfB)^p\rho_\bfB(B)^p + \sum_{\bfB\in \cB\setminus \cC} \sum_{B\in \cB_\bfB} \rho(\bfB)^p\rho_\bfB(B)^p  \\
&\stackrel{\eqref{eq:tilderhosum},\eqref{eq:tilderhosum2}}{\leq} \sum_{\bfB\in \cC} \eta\rho(\bfB)^p + \sum_{\bfB\in \cB\setminus \cC} \rho(\bfB)^p.
\end{align*}
This is the desired estimate and the proof is complete.
\end{proof}

If the space has uniformly small moduli of annuli, then we have readily available collections $\cB_\bfB$ and functions $\rho_\bfB$ as in the statement of the previous lemma. Suppose now that $\Gamma$ is any collection of curves and $\rho \overline{\wedge}_{\tau, \cB_0} \Gamma$ for some collection of balls $\cB_0$ with sufficiently small radii compared to the diameter of the curves $\gamma\in \Gamma$.  If the space has uniformly small moduli of annuli, we can first use the push-down procedure on all of the balls, in order to reduce the discrete modulus of $\Gamma$ below any threshold we want. This process is called ``weight reduction''. The balls in the resulting collection can have arbitrarily large and small radii. However, any ``large'' balls can be replaced by smaller ones with controlled size. This process can be repeated until all the balls in a collection are roughly the same size. This phase is called ``size reduction''. As a consequence, the push-down algorithm in conjunction with the uniformly small moduli of annuli leads to the ability to convert the collection $\cB$ into a collection of balls $\overline{\cB}$ which is roughly at the same scale, and so that $\overline{\Mod}_{p,\tau}(\Gamma,\overline{\cB})$ is as small as we want. This process is called the ``equalizing algorithm'', and is depicted in Figure \ref{fig:equalizing}. 

\begin{figure}[h!]\label{fig:equalizing}\includegraphics[width=.4\textwidth]{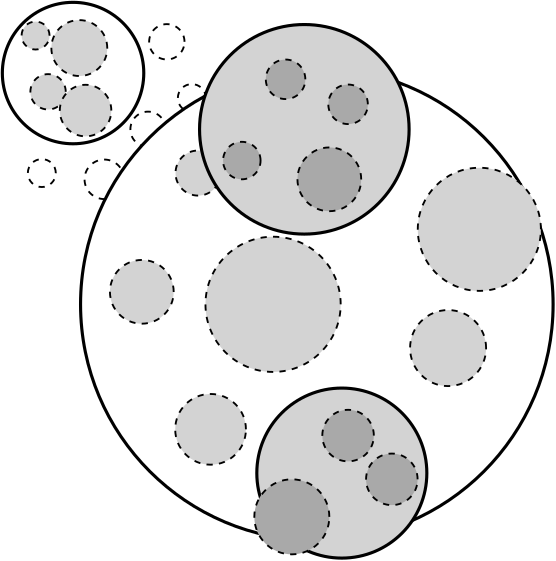}
\caption{The equalizing algorithm: By using replacement and a uniform bound on moduli of annuli, we can ``uniformize'' a wild cover $\cB$. Let $\cB$ be a covering using balls, where the size of the largest ball is much bigger than the smallest. We take all the ``large'' balls, and form a collection $\cC$ of them. To them, we apply the push-down procedure to reduce their size. We repeat this process until all large balls have been pushed down to a size comparable to the smallest ball in our collection. In the figure $\cB$ consists of balls filled with white. The two large balls have solid line boundaries, and are replaced by smaller light gray filled balls. Two of these light gray balls are still too large, and are replaced by even smaller dark gray filled balls.}
\end{figure}

The next lemma and its proof expresses formally the effect of the equalizing algorithm.

\begin{lemma}\label{lem:equalizingalg}Let $p\in (1,\infty), \tau\geq 4$. Let $X$ be an LLC compact metric space which has uniformly small $p$-moduli of annuli, with constant $\delta_->0$ as in Definition \ref{def:smallannuli}. Then there exists a $\kappa\geq 1$ so that for every $\epsilon_0>0$ and every $M\geq 1$ there exists a $N\geq 1$ with the following property. Let $\cB_0$ be a collection of balls for which $\inf_{\gamma\in \Gamma}\diam(\gamma)\geq \tau \rad(\cB_0)$,  and
\[
\overline{\Mod}_{p,\tau}(\Gamma, \cB_0)< M.
\]

Then, for every  $r>0$ with $\delta_-^{N}\inf\{\rad(B): B\in \cB_0\}\geq r$, there exists a collection of balls $\cV$ with
\[
\overline{\Mod}_{p,\tau}(\Gamma,\cV)\leq \epsilon.
\]
and  $\kappa^{-1} r\leq r_V\leq r$ for all $V\in \cV$.

\end{lemma}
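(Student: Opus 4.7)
The plan is to iterate the push-down Lemma~\ref{lem:replaceannulus} in two successive phases, matching the \emph{weight reduction} and \emph{size reduction} stages described in Figure~\ref{fig:equalizing}. Let $\epsilon\in(0,1)$ and $\delta_{-},\delta_{+}$ be the constants from Definition~\ref{def:smallannuli}, set $\kappa=\delta_-^{-1}$, and let $N$ be the smallest integer with $\epsilon^{N}M\leq\epsilon_0$. Write $r_{\min}=\inf\{\rad(B):B\in\cB_0\}$, and choose an admissible $\rho_0\overline{\wedge}_{\tau,\cB_0}\Gamma$ with $\sum_{B\in\cB_0}\rho_0(B)^p<M$. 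At every step of the iteration, for each ball $\bfB$ in the current collection, Definition~\ref{def:smallannuli} supplies a replacement pair $(\cB_\bfB,\rho_\bfB)$ with $\rad(\cB_\bfB)\in[\delta_-\rad(\bfB),\delta_+\rad(\bfB)]$ (so in particular $\rad(\cB_\bfB)\leq\tau^{-1}\rad(\bfB)$), with each replacement ball meeting $(\tau-2)\bfB$, $\rho_\bfB\overline{\wedge}_{\tau,\cB_\bfB}\Gamma_{\bfB,\tau-2}$, and $\sum\rho_\bfB^p\leq\epsilon$; these are exactly the hypotheses of Lemma~\ref{lem:replaceannulus} with $\eta=\epsilon$, and the precondition $2(\tau-2)\rad(\cB)\leq\inf_\gamma\diam(\gamma)$ is preserved throughout since radii only decrease.

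\emph{Phase~1 (weight reduction).} Apply Lemma~\ref{lem:replaceannulus} $N$ times with $\cC$ taken to be the full current collection. Since every ball is replaced, the energy bound of that lemma specializes to $\sum\rho_{i+1}^p\leq\epsilon\sum\rho_i^p$. After $N$ iterations the energy drops to $\leq\epsilon^N M\leq\epsilon_0$, and by property~(1) of the replacement all ball radii lie in $[\delta_-^N r_{\min},\delta_+^N\rad(\cB_0)]$. Under the hypothesis $\delta_-^N r_{\min}\geq r$, every ball in the resulting collection $\cB_N$ has radius at least $r$.

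\emph{Phase~2 (size reduction).} Starting from $(\cB_N,\rho_N)$, iterate Lemma~\ref{lem:replaceannulus} with $\cC=\{B:\rad(B)>r\}$. The energy bound becomes $\sum\overline{\rho}^p\leq\epsilon\sum_{B\in\cC}\rho^p+\sum_{B\notin\cC}\rho^p\leq\sum\rho^p$, so the energy is non-increasing and remains $\leq\epsilon_0$. Each push-down reduces the radius of every ball in $\cC$ by a factor of at least $\delta_+^{-1}>\tau>1$, so after finitely many iterations every ball has radius $\leq r$; call the final collection $\cV$, so $\overline{\Mod}_{p,\tau}(\Gamma,\cV)\leq\epsilon_0$.

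For the radius lower bound: a ball $B\in\cV$ either coincides with some ball of $\cB_N$ (necessarily of radius exactly $r$, since all $\cB_N$-balls start with $\rad\geq r$ and the strict inequality $\rad>r$ triggers replacement in Phase~2), or descends from some parent $\bfB$ with $\rad(\bfB)>r$, in which case $\rad(B)\geq\delta_-\rad(\bfB)>\delta_- r=\kappa^{-1}r$. Hence $\kappa^{-1}r\leq\rad(B)\leq r$ for every $B\in\cV$. The main obstacle is bookkeeping: one must check that the hypotheses of Lemma~\ref{lem:replaceannulus} (most delicately, the curve-diameter precondition) remain valid at each step as the collection evolves, and that the radius window $[\delta_-^N r_{\min},\delta_+^N\rad(\cB_0)]$ produced by Phase~1 aligns with the cutoff $r$ used in Phase~2 so as to produce the final range $[\kappa^{-1}r,r]$; both issues are resolved by the monotone decrease of radii and the choice of $N$ tied to $\epsilon^N M\leq\epsilon_0$.
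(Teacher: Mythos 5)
The proposal is correct and follows the same two-phase (weight reduction, then size reduction) iteration of Lemma \ref{lem:replaceannulus} as the paper. The only meaningful differences are cosmetic choices of constants: you take $\kappa=\delta_-^{-1}$ and a Phase~2 cutoff of $r$ (so replaced balls land in $(\kappa^{-1}r, r]$ directly), whereas the paper takes $\kappa=2\delta_-^{-1}$ and cutoff $\kappa r$ (landing in $[r,\kappa r]$); your version actually aligns slightly more cleanly with the normalization in the lemma statement. Your termination argument ($\delta_+^{-1}>\tau>1$ geometric decay of the max radius among replaced balls) and your lower-bound bookkeeping (a ball in the final collection either survives from $\cB_N$ with radius exactly $r$, or was created as a replacement of some parent with radius $>r$ and therefore has radius $>\delta_- r$) match the paper's reasoning. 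Your handling of the energy (strict contraction by $\epsilon$ in Phase~1 with $\cC$ the whole collection, monotone non-increase in Phase~2) and of the diameter precondition of Lemma \ref{lem:replaceannulus} (preserved since $\rad(\cB_k)$ is non-increasing) is also the same as in the paper.
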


We briefly remark, that is is crucial for our later arguments, that $\kappa$ is independent of $\epsilon_0>0$. That is, there exists a $\kappa$, which controls the ratio of the largest ball in $\cV$ to the smallest one. Given this $\kappa$, we can choose any $\epsilon$ to obtain a modulus smaller than that. The cost of decreasing $\epsilon$ is to increase $N$ and thus reduce the scale $r$ of $\cV$.

\begin{proof}[Proof of Lemma \ref{lem:equalizingalg}]
Let $\tau \geq 4$, and let $0<\delta_-<\delta_+<\tau^{-1}$ and $\epsilon\in (0,1)$ be the parameters from the definition of having uniformly small $p$-moduli of annuli, see Definition \ref{def:smallannuli}. Let $\kappa =2\delta_-^{-1}$,  $N=\lceil \log_{2}(\max\{M/\epsilon_0,1\})/\log_2(\epsilon^{-1}) \rceil + 1$ and $r\leq \delta_-^{N}\inf\{\rad(B): B\in \cB_0\}$.

Further, for any ball $\bfB=B(x,s)$ in $X$, let $\cB_{\bfB}$ and $\rho_B$ denote the functions $\cV_{x,s}$ and $\rho_{x,s}$ given in Definition \ref{def:smallannuli} with
\begin{equation}\label{eq:smallanncond}
\sum_{B\in \cB_\bfB} \rho_{\bfB}(B)^p \leq \epsilon,
\end{equation}
and $\rho \overline{\wedge}_{\tau, \cB_\bfB} \Gamma_{\bfB, \tau-2}.$

Let $\cB_0$ be a finite cover of $X$ by balls with $\inf_{\gamma\in \Gamma}\diam(\gamma)\geq \tau \rad(\cB_0)$ for which $\overline{\Mod}_{p,\tau}(\Gamma, \cB_0)< M$. Then, there exists a $\rho_0:\cB_0\to [0,\infty)$ with $\rho \overline{\wedge}_{\tau,\cB_0} \Gamma$ and with mass
\[
\sum_{B\in \cB_0} \rho(B)^p <M.
\] 

First, we replace $\cB_0$ through a finite sequence of replacements by a collection of balls with respect to which $\Gamma$ has small modulus. This we call the ``weight reduction phase''. We construct a sequence of covers $\cB_k$, for $k\in \N$, as follows. Proceed inductively and apply Lemma \ref{lem:replaceannulus} for each $k\in \N$ with $\cB=\cB_k$, $\rho=\rho_k$ and $\cC=\cB_k$, and with $\rho_\bfB, \cB_\bfB$ and  $\epsilon=\eta$ satisfying \eqref{eq:smallanncond}, to obtain a collection $\cB_{k+1}=\overline{\cB}$ and function $\rho_{k+1}=\overline{\rho}$ with $\rho_{k+1}\overline{\wedge}_{\tau,\cB_{k+1}} \Gamma$ and
\[
\sum_{B\in \cB_{k+1}} \rho(B)^p \leq \epsilon^{k} M.
\]
We note that for each ball $B\in \cB_{\bfB}$ we have $\rad(B)\geq \delta_- \rad(\bfB)$, by the assumption of uniformly small moduli of annuli. Therefore, for all $k\in \N$ we get 
\begin{equation}\label{eq:stepradiusred1}
\inf\{\rad(B): B\in \cB_{k+1}\}\geq \delta_- \inf\{\rad(B): B\in \cB_k\}.
\end{equation}
By iteration of this inequality, we get
\begin{equation}\label{eq:stepradiusred}
\inf\{\rad(B): B\in \cB_{N}\}\geq \delta_-^N \inf\{\rad(B): B\in \cB_0\}.
\end{equation}
By the choice of $N$, we have $\epsilon^N M \leq \epsilon_0$. Thus, $\rho_{N}$ satisfies the desired mass bound:
\begin{equation}\label{eq:desiredmass}
\sum_{B\in \cB_{N}} \rho_N(B)^p \leq \epsilon_0.
\end{equation} 

The balls in $\cB_N$ have various different sizes. Next, we will embark on a ``size reduction phase''. Let $\overline{\cB}_0=\cB_N$, and $\overline{\rho}_0 = \rho_N$.
Let
$s:= \min\{\rad(B): B\in \cB_{N}\},$
and let $S_0=\rad(\overline{\cB}_0)$. From the assumption and \eqref{eq:stepradiusred}, we obtain
\begin{equation}\label{eq:radiuslowerbound}
s\geq \delta_-^{N} \inf\{\rad(B): B\in \cB_0\}\geq r.
\end{equation}
If $S_0 \leq \kappa r$, then we do not do anything and we let $L=0$. If on the other hand $S_0 > \kappa r$, we start running the following algorithm.

Set $k=0$. While $S_k>\kappa r$, let $\cC_k = \{B\in \overline{\cB}_k : \rad(B)>\kappa r\}$. Apply Lemma \ref{lem:replaceannulus} with $\cB=\overline{\cB}_k$, $\rho=\overline{\rho}_k$ and $\cC=\cC_k$ and with $\rho_\bfB, \cB_\bfB$ and with $\epsilon=\eta$ satisfying \eqref{eq:smallanncond}. This gives a collection $\overline{\cB}_{k+1}$ and strongly admissible function $\overline{\rho}_{k+1}$. Set $S_{k+1}=\rad(\overline{\cB}_{k+1})$, and increment $k$ by one. Once $S_{k}\leq \kappa r$, terminate the algorithm. We will soon show that the algorithm terminates in finite time. Let $L=k$ be the time it terminates.

We have, as part of Lemma \ref{lem:replaceannulus}, that $\overline{\rho}_k \overline{\wedge}_{\tau, \overline{\cB}_k} \Gamma$ for every $k\in [0,L]\cap \Z$. Further,  by noting that $\epsilon \in (0,1)$, we get
\[
\sum_{B\in \overline{\cB}_{k+1}} \overline{\rho}_{k+1}(B)^p \leq 
\sum_{B\in \overline{\cB}_{k+1}} \overline{\rho}_k(B)^p.
\]
By iterating this $k$ times, we get from \eqref{eq:desiredmass}  that
\begin{equation}\label{eq:desiredmass2}
\sum_{B\in \overline{\cB}_{k}} \overline{\rho}_k(B)^p \leq \sum_{B\in \overline{\cB}_{0}} \overline{\rho}_0(B)^p  \leq \ \epsilon_0.
\end{equation} 

Let us analyse the effect of the algorithm on the radii of the collections $\overline{\cB}_k$, and the termination of the algorithm. Assume that $k\geq 0$. At each step, a ball $B$ in  $\cB_{k+1}$ is either equal to a ball $\bfB\in\overline{\cB}_{k}$ with $\rad(\bfB)\leq \kappa r$, or $B\in \cB_{\bfB}$ for some $\bfB\in \overline{\cB}_k$ with $\kappa s < \rad(\bfB)\leq S_k$. By construction, in either case $\rad(B)\leq \max\{\delta_+ \rad(\bfB), s\kappa\}$. Thus, by taking a supremum over all balls $B\in \overline{\cB}_{k+1}$, we get that $S_{k+1}\leq \max\{\kappa r, \delta_+S_k\}$. In particular, while $S_k>\kappa r$, then the values $S_{k}$ form a geometrically decreasing sequence. This can only last for finitely many steps. Therefore, there must exist some $L\geq 0$, when the algorithm terminates with $S_L \leq \kappa r$.

We show now that by induction each ball $B\in \overline{\cB}_{k}$, for $k=0,\dots, L$ satisfies $\rad(B)\in [r,S_k]$. The upper bound is obvious, so we focus on the lower bound. The case of $k=0$ is also obvious. 
So, we focus on the induction step. During the algorithm, for $k=0,\dots, L-1$, each ball $B\in \overline{\cB}_{k+1}$ is either equal to a ball $\bfB\in \overline{\cB}_{k}$ or for some $\bfB\in \overline{\cB}_k$ we have $B\in\overline{\cB}_{\bfB}$ and $\rad(\bfB)>\kappa r$. 
In the first case, $\rad(B)\in [r,\kappa r]$. In the second case $\rad(B)\in [\delta_- \rad(\bfB),\delta_+\rad(\bfB)]$, and thus $\rad(B)\geq \delta_- \kappa r > r$ since $\delta_->\kappa^{-1}$ by choice of $\kappa$ at the beginning of the proof. In either case $r\leq \rad(B)\leq S_k$. 
Therefore, for all $B\in \overline{\cB}_{k}$, for $k=0,\dots, L$ we have $\rad(B)\in [r,S_k]$.

Now, for $k=L$, we have $\rad(B)\in [r,\kappa r]$, since $S_L\leq \kappa r$. Now set $\cV=\overline{\cB}_L$. We thus get the desired claim, since \eqref{eq:desiredmass2} gives the desired mass bound for $\overline{\Mod}_{p,\tau}(\Gamma,\cV)$, since $\overline{\rho}_L \overline{\wedge}_{\tau,\overline{\cB}_L} \Gamma$, and we have already observed that for all $V\in \cV$ we have $\kappa^{-1}r\leq r_V\leq r$.
\end{proof}

\subsection{Estimate for Bourdon-Kleiner modulus}

In this section, we use the algorithm in the previous subsection to give an explicit relationship between the Bourdon-Kleiner modulus from Subsection \ref{subsec:discretemoduli} and our new discrete modulus from Subsection \ref{subsec:newmod}. The basic idea is to use doubling to give an initial collection $\cV$, and then to use Lemma \ref{lem:equalizingalg} to push the collection down to roughly uniform size with small modulus. This push-down operation is quantitative. Once the collection consists of balls of roughly the same size, we can apply Proposition \ref{prop:discretemodulus} to compare the modulus to the Bourdon-Kleiner modulus of the same collection.


\begin{proposition} \label{prop:smallmodulus} Fix $\kappa\geq 1, p\in (1,\infty)$. For each $k\in \N$, let $\cU_{k}$ be a $\kappa$-approximations at scale $2^{-k}$ for a compact LLC space $X$.  If $X$ has uniformly small $p$-moduli of annuli, then for every $\epsilon>0$, there exists a $l\in \N$ for which for all $z\in X$ and all $k\geq 0$, we have
\[
\Mod_{p,\cU_{l+k}}(\Gamma_{B(z,2^{-k}),2})\leq \epsilon
\]
\end{proposition}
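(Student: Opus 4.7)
The plan is to build a bounded initial cover $\cB_0$ at scale $\sim 2^{-k}/\tau$, run the equalizing algorithm of Lemma~\ref{lem:equalizingalg} to produce a collection $\cV$ at scale $2^{-(l+k)}$ whose new modulus $\overline{\Mod}_{p,\tau}(\Gamma_{B,2},\cV)$ is as small as desired, and then invoke Proposition~\ref{prop:discretemodulus} to transfer this bound into a bound on $\Mod_{p,\cU_{l+k}}(\Gamma_{B,2})$. Throughout, fix $\tau\geq 4$ from Definition~\ref{def:smallannuli} and write $B=B(z,2^{-k})$, $\Gamma=\Gamma_{B,2}$. Since $X$ admits $\kappa$-approximations at every scale, it is doubling; call the doubling constant $D$.

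Set $r_0:=2^{-k}/(2\tau)$. By doubling, one can cover $\overline{2B}$ by a collection $\cB_0$ of balls of radius $r_0$ whose cardinality is bounded by a constant $C_0=C_0(\tau,D)$, independently of $z$ and $k$. The constant function $\rho\equiv 1$ on $\cB_0$ is strongly $\tau$-admissible for $\Gamma$: every curve in $\Gamma$ meets $B$, hence meets at least one $B'\in\cB_0$, and taking $\cB_\gamma=\{B'\}$ makes the $\tau$-disjointness trivial. Consequently
\[
\overline{\Mod}_{p,\tau}(\Gamma,\cB_0)\leq |\cB_0|\leq C_0=:M.
\]
Moreover curves in $\Gamma$ have diameter at least $2^{-k}=2\tau r_0>\tau\rad(\cB_0)$, so Lemma~\ref{lem:equalizingalg} applies.

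Now let $\kappa':=\max\{\kappa,2\delta_-^{-1}\}$ (so that any $\kappa$-approximation is a $\kappa'$-approximation and any collection of balls is $\kappa'$-round), and let $C$ be the constant supplied by Proposition~\ref{prop:discretemodulus} for this $\kappa'$. Choose $\epsilon_0:=\epsilon/C$ and invoke Lemma~\ref{lem:equalizingalg} to obtain an integer $N=N(\epsilon_0,M)$, depending only on $\epsilon$ and universal data. Pick $l\in\N$ with $2^{-l}\leq \delta_-^N/(2\tau)$; then $r:=2^{-(l+k)}$ satisfies $r\leq\delta_-^N r_0\leq \delta_-^N\inf\{\rad(B'):B'\in\cB_0\}$, and the lemma produces a collection $\cV$ of balls with
\[
\overline{\Mod}_{p,\tau}(\Gamma,\cV)\leq \epsilon_0, \qquad (2\delta_-^{-1})^{-1}r\leq r_V\leq r \text{ for all } V\in\cV.
\]
In particular $\kappa'^{-1}r\leq r_V\leq r$, so Proposition~\ref{prop:discretemodulus} (applied to the $\kappa'$-approximation $\cU_{l+k}$ at level $r$ and the $\kappa'$-round $\cV$) yields
\[
\Mod_{p,\cU_{l+k}}(\Gamma)\leq C\,\overline{\Mod}_{p,\tau}(\Gamma,\cV)\leq C\epsilon_0=\epsilon.
\]
Since $l$ depends only on $\epsilon$, this is the required uniform bound.

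The only conceptually delicate point is the initial estimate $\overline{\Mod}_{p,\tau}(\Gamma,\cB_0)\leq M$ with $M$ independent of $z,k$: this is what allows the equalizing algorithm to be run with a uniform number of iterations $N$, and it hinges on the curves of $\Gamma_{B,2}$ all having diameter $\gtrsim 2^{-k}$ together with the doubling bound on the number of balls of radius $r_0\sim 2^{-k}/\tau$ needed to cover $2B$. The rest is bookkeeping of scales: the exponential shrinkage factor $\delta_-$ per iteration of the push-down determines $l$ through $l\gtrsim N\log_2(1/\delta_-)$.
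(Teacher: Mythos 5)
Your proof is correct and follows essentially the same strategy as the paper's: build a uniformly bounded initial cover $\cB_0$ at scale $\sim 2^{-k}$, certify $\overline{\Mod}_{p,\tau}(\Gamma,\cB_0)\leq M$ with the constant weight $\rho\equiv 1$ (singleton $\cB_\gamma$ making strong admissibility trivial), run Lemma~\ref{lem:equalizingalg} to reach a collection $\cV$ at level $2^{-(l+k)}$ with small modulus, and transfer via Proposition~\ref{prop:discretemodulus}. The only cosmetic difference is that the paper manufactures $\cB_0$ from the given approximation $\cU_{k+l_0}$ rather than from a fresh doubling cover, and fixes $\tau=4$; your bookkeeping of the constants $\kappa'=\max\{\kappa,2\delta_-^{-1}\}$, $M$, $N$, and $l$ is consistent with (and slightly more careful than) the paper's.
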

\begin{proof}
Fix $k\in \N$ and $\epsilon>0$. Let $\tau=4$ and let $l_0=\lceil\log_2(\tau)\rceil+4$. Let $X$ have uniformly small $p$-moduli of annuli with constant $\delta_-\in (0,\tau^{-1})$.  Choose $\kappa'\geq \kappa$ be the constant from Lemma \ref{lem:equalizingalg}, and let $C$ be the constant associated to $\kappa',\tau$ and the space $X$ which comes from Proposition \ref{prop:discretemodulus}. Set $\epsilon_0 = C^{-1}\epsilon$. 

By doubling, we have that there is a constant $D$ independent of $k$ so that there are at most $D$ many sets in $\cU_{k+l_0}$ which intersect $B(z,2^{1-k})$. Set
\[
\cB_0=\{B(x_U,r_U): U\in \cU_{k+l_0}, U\cap B(z,2^{1-k}) \neq \emptyset\}
\]
and set $\rho_0(B)=1$ for all $B\in \cB_0$. Then, by applying the definition, and since $\cB_0$ covers $B(z,2^{1-k})$, we see $\rho_0 \overline{\wedge}_{\tau,\cB_0} \Gamma_{B(z,2^{-k}),2}$. By the size bound for $\cB_0$, we get
\[
\sum_{B\in \cB_0} \rho_0(B)^p \leq D.
\]
By Lemma \ref{lem:equalizingalg}, there exists an integer $N\in \N$ (which depend on $\epsilon$, $D$ and the constants in the uniformly small moduli condition) with the following properties. For any $r>0$ with $\delta_-^{N} \inf\{\rad(B): B\in \cB_0\}\geq r$ there is a collection of balls $\cV$ with
\[
\overline{\Mod}_{p,\tau}(\Gamma, \cV) \leq \epsilon_0=C^{-1}\epsilon.
\]
and $r_V\in [\kappa'^{-1} r, r]$ for all $V\in \cV$.  

Now, if $l\geq l_0+N\lceil\log_2(\delta_-^{-1})\rceil+1$, then we can choose $r=2^{-k-l}$.
Then, by Proposition \ref{prop:discretemodulus}, we get for the $\kappa$-approximation $\cU_{l+k}$ at level $r$ that
\[
\Mod_{p,\cU_{l+k}}(\Gamma_{B(z,2^{-k}),2})\leq C\overline{\Mod}_{p,\tau}(\Gamma, \cV)\leq \epsilon.
\]
\end{proof}
\subsection{Proof of main theorem}\label{subsec:proofmain}

\begin{proof}[Proof of Theorem \ref{thm:mainthm}] Because the Ahlfors regular conformal dimension is always greater than the conformal Hausdorff dimension, we have $\dims_{CH}(X)\leq \dims_{CAR}(X)$. We are left to prove the converse inequality. Since $X$ is connected, compact, locally connected and quasiself-similar, by Lemma \ref{lem:quasiselfconn} $X$ is LLC.

Suppose that $p$ is arbitrary and $\dims_{CH}(X)<p$. Fix any sequence of $\kappa$-approximations $\{\cU_k\}_{k\in \N}$, where $\cU_k$ is at scale $2^{-k}$. By Lemma \ref{lem:Hausdorff}, we have that $X$ has uniformly small moduli of annuli. Then,  by Proposition \ref{prop:smallmodulus}, we have that 
\[
\liminf_{m\to\infty} \sup_{x\in X, k\in \N}\{ \Mod_p(\Gamma_{B(x,2^{-k}),2},\cU):  \cU \text{ is a } \kappa-\text{ approximation at level } 2^{-k-m}\} = 0.
\]
Then, Theorem \ref{thm:confhausdim} implies that $\dims_{CAR}(X)\leq p$. Since $p>\dims_{CH}(X)$ is arbitrary, this completes the proof. 

\end{proof}

\bibliographystyle{acm}
\bibliography{pmodulus}

\end{document}